\theoremstyle{definition}
\newtheorem{assumption}{Assumption}
\newtheorem{definition}{Definition}
\newtheorem{remark}{Remark}
\theoremstyle{plain}
\newtheorem{theorem}{Theorem}
\newtheorem{lemma}{Lemma}
\DeclareMathOperator*{\arginf}{arg\, inf}
\begin{document}

\title{The Prescription Approach to Decentralized Stochastic Control with Word-of-Mouth Communication}

\author{Aditya Dave, {\itshape{Student Member, IEEE,}} and Andreas A. Malikopoulos, {\itshape{Senior Member, IEEE}} 
	\thanks{This research was supported in part by ARPAE's NEXTCAR program under the award number DE-AR0000796 and by the Delaware Energy Institute (DEI).}%
	\thanks{The authors are with the Department of Mechanical Engineering, University of Delaware, Newark, DE 19716 USA (email: \texttt{adidave@udel.edu; andreas@udel.edu).}} }


\maketitle

\thispagestyle{empty} 


\begin{abstract}
In this paper, we analyze a network of agents that communicates through word of mouth. In a word-of-mouth communication system, every agent communicates with her neighbors with delays in communication. This is a non-classical information structure where the topological and temporal restrictions in communication mean that information propagates slowly through the network. We present the prescription approach to derive structural results for problems with word-of-mouth communication. The structural results lead to optimal control strategies in time-invariant spaces. We also present a comparison of our results with the common information approach.
\end{abstract}

\begin{IEEEkeywords}
Decentralized control, information structures, stochastic control, team theory, partially observable Markov decision processes.
\end{IEEEkeywords}

\IEEEpeerreviewmaketitle

\section{Introduction}


\IEEEPARstart{T}{he} interdependence of many engineering systems can enable the development of a novel framework to process large amounts of data and deliver real-time control actions that optimize associated benefits. As we move to increasingly complex systems \cite{Malikopoulos2016c}, new decentralized control approaches are needed to optimize the impact on system behavior of the interaction between its entities.
Centralized stochastic control \cite{23} has been a ubiquitous approach to control complex systems \cite{Malikopoulos2015}, \textcolor{black}{and is well understood.} It refers to multi-stage optimization problems with external disturbances and noisy observations controlled by a single decision maker. A key assumption is that the decision maker perfectly recalls all past control actions and observations. The information available to an agent when making a decision is called the \textit{information structure} of the system. If every agent has access to the same information and a perfect recall of the information, the information structure is called the \textit{classical information structure}.
While centralized systems have been extensively studied, their core assumption does not hold for many applications involving multiple agents, e.g.,  connected automated vehicles \cite{Malikopoulos2018}, a swarm of drones \cite{Malikopoulos2019a}, and smart grids \cite{Hiskens2012}. In such systems, all agents simultaneously make a decision based only on their memory and local information received through communication with other agents. Such agent-to-agent communication might be generally  delayed or costly \cite{Malikopoulos2018d}. Thus, it is not possible to compute a centralized estimate of the complete state of the system from the observation history of a single agent. Such multi-stage optimization problems \cite{Malikopoulos2015b} are known as decentralized stochastic control problems.

In this paper, we study a decentralized system with a word-of-mouth information structure.  In the absence of any entity facilitating transmission of information among agents (a shared memory), the communication among agents {occurs through the \textit{word of mouth}.}
In a word-of-mouth communication, we model the system as a network of agents connected by communication links. An agent can directly communicate with her neighbors in the network, {with some associated delays}. Thus, information from each agent propagates through the network by her neighbors who share it with their neighbors, and so on. This problem has a non-classical information structure because of the delays in communication.
{Such a model} can describe many different decentralized systems with limited communication. As an example, consider a team of drones {that} conducts a rescue operation at an inaccessible location. At any time, all drones simultaneously {must make} the best possible decision \textcolor{black}{under asymmetric information}. Due to hardware limitations, a drone can only communicate within a \textcolor{black}{limited} range. The range of the entire team of drones, however, may need to extend far beyond the range of a single drone. In this situation, the drones need to communicate through word of mouth to make optimal decisions.

\subsection{Related Work}

Decentralized stochastic control is fundamentally different from and significantly more challenging than centralized stochastic control. The most common approach in centralized stochastic control is dynamic programming (DP), which is not directly applicable to decentralized systems since no {general} separation results between estimation and control are known \cite{witsenhausen1968counterexample}. Thus, new approaches are needed to address these problems. There are three general approaches in the literature that use techniques from centralized stochastic control.

The person-by-person approach aims to transform the problem into a centralized stochastic control problem from the point of view of a single agent. This is done by arbitrarily fixing the control strategies for all agents except for one agent ${i}$. The control strategy of agent $i$ is then optimized for this new problem, assuming the fixed strategies used by the other agents are known to agent $i$. This allows for the use of techniques from centralized stochastic control, including DP.
This process is then repeated for all agents until a stable equilibrium is attained. This control strategy {is} called a person-by-person optimal strategy. In general, the solution is not globally optimal \cite{2}.
\textcolor{black}{However, the solution's structural properties may guarantee a time-invariant optimal strategy.}
An early application of this idea was found in problems of real-time communication using encoders and decoders \cite{4,5,9,11,13}.
The person-by-person approach has also been used in decentralized hypothesis testing and quickest detection problems \cite{8}.
Some other applications have appeared in networked control systems \cite{6} and team decision problems with partially nested information structures \cite{2,1}. In special cases, such as team decision problems with static information structures \textcolor{black}{and convex cost,} the person-by-person solution is globally optimal \cite{marschak1972economic}.

The designer's approach {considers} the point of view of a designer with knowledge of the model and statistics of all random variables in the system. The designer's task is to choose the optimal control strategy, or \textit{design}, for the system.
This is done by transforming the problem into a centralized planning problem, and then using DP to derive the optimal strategy. The control action for the designer's planning problem is the control law that the designer assigns to each agent at each time step. DP ensures global optimality of these assignments. \textcolor{black}{However, the designer's approach, first introduced by Witsenhausen \cite{3} for general models, suffers from high computational complexity.
This approach was later applied in controls with a fixed memory \cite{10}. This approach was further developed in \cite{mahajan2008sequential}.}

The most recent approach in decentralized control is the \textit{common information approach}, first presented for problems with partial history sharing \cite{17}, \cite{14}, where each agent shares a subset of her past observations and control actions to a shared memory accessible to every agent in the system. Then, each agent makes a decision using information available in the shared memory (the common information) and the recall of her own history.
The solution is derived by reformulating the system from the viewpoint of a fictitious \textit{coordinator} with access only to the shared information, {whose} task is to prescribe control strategies to each agent. This approach has been used to derive \textit{structural results} and a DP decomposition for problems with delayed information to the shared memory \cite{14}. Structural results are properties of optimal control strategies that address how we can compress information that is increasing with time to a sufficient statistic that takes values in a time-invariant space.
The common information approach has also been used in problems with control sharing information structure \cite{16}, stochastic games with asymmetric information \cite{20} and teams with mean-field sharing \cite{Mahajan2015}. There are also some earlier papers that used similar ideas to analyze specific information structures \cite{yuksel2009stochastic, yoshikawa1978decomposition, aicardi1987decentralized}.

\subsection{Contributions}

\textcolor{black}{The common information approach is considered as the standard approach to analyze systems with a word-of-mouth information structure. However, when the maximum delay in communication across agents is large, the common information approach can lead to a large number of computations to derive the optimal strategies.
This has motivated us to develop the prescription approach for systems with different delays in communication among different agents. Some related preliminary results can be found in \cite{Aditya_2019, dave2020structural}.} 

\textcolor{black}{The contributions of this paper are: We present the prescription approach that leads to improved structural results for decentralized systems with a word-of-mouth information structure. These structural results lead to a smaller search space for optimal strategies when compared to the common information approach (see Section V-B for details), or any other approach. The structural properties and derivations of our results are distinct from existing results in literature. In fact, the common information approach can be considered as a special case of the prescription approach. Furthermore, our results are applicable to a wide range of decentralized control problems with delayed, asymmetric information sharing among agents.}

\subsection{A static system}

\textcolor{black}{In this subsection, we present a simple example to illustrate the main idea behind the prescription approach and make a comparison with the common information approach. Consider a static system with three agents. Let $X$ denote the state of the system and $Y^1, \; Y^2, \; Y^3$ be the observations of each agent about $X$. All random variables are binary and their joint probability distribution is known.}
\textcolor{black}{The information available to agents $1$, $2$, and $3$ is $M^1 = \{Y^1, Y^2, Y^3\}$, $M^2 = \{Y^2, Y^3\}$, and $M^3 = \{Y^3\}$, respectively. Every agent $k \in \{1,2,3\}$ selects a control action as $U^k = g^k(M^k)$ and the system incurs a cost $c(X, U^1, U^2, U^3)$. The problem is to find the control strategy $\boldsymbol{g} = (g^1,g^2,g^3)$ that minimizes the performance criterion
    $\mathcal{J}(\boldsymbol{g}) = \mathbb{E}^{\boldsymbol{g}}[c(X, U^1, U^2, U^3)]$.}
\textcolor{black}{Using a brute force search, we would have to compute the performance of $2^8 \times 2^4 \times 2^2 = 16,384$ strategies before arriving at the optimal control strategy.}

\textcolor{black}{The key idea of this paper is to partition the memory of different agents into accessible and inaccessible information to simplify the search for optimal strategies. We formally define these terms in Section III-A. Here, we simply note that the accessible information of agents $1$, $2$, and $3$ are $A^1 = \{Y^1, Y^2, Y^3\}$, $A^2 = \{Y^2, Y^3\}$, and $A^3 = \{Y^3\}$, respectively. The accessible information of agent $3$ is also the common information in the system.
The inaccessible information of agent $1$ with respect to agent $1$ is $L^{[{{1}},{{1}}]} = \emptyset$, with respect to agent $2$ is $L^{[{{1}},{{2}}]} = \{Y^1\}$, and with respect to agent $3$ is $L^{[{{1}},{{3}}]} = \{Y^1, Y^2\}$. Similarly, the inaccessible information of agent $2$ with respect to agent $2$ is $L^{[{{2}},{{2}}]} = \emptyset$, and with respect to agent $3$ is $L^{[{{2}},{{3}}]} = \{Y^2\}$. Finally, the inaccessible information of agent $3$ with respect to agent $3$ is $L^{[{{3}},{{3}}]} = \emptyset$. Note that $M^1 = A^1 \cup L^{[1,1]} = A^2 \cup L^{[1,2]}$ and so on.}

\textcolor{black}{Now we consider that every agent $k \in \{1,2,3\}$ generates prescriptions $(\Gamma^{[k,1]}, \Gamma^{[k,2]}, \Gamma^{[k,3]})$ corresponding to every other agent in $\{1,2,3\}$, respectively, by only observing the accessible information $A^k$. The prescription $\Gamma^{[k,i]}$ is a mapping from the inaccessible information $L^{[i,k]}$ to the control action $U^i$. Thus, for agent $3$, for every $k \in \{1,2,3\}$, we can write $\Gamma^{[{{3}},{{k}}]} = \psi^{[{{3}},{{k}}]}(A^3) = \psi^{[{{3}},{{k}}]}(Y^3),$ where $\psi^{[{{3}},{{k}}]}$ is the \textit{prescription law} for the prescription of agent $3$ for agent $k$. 
Let $\Gamma^{[{{3}},{{k}}]}_n = \psi^{[{{3}},{{k}}]}(Y^3 = n)$ for $n \in \{0,1\}$. Then, the performance criterion for agent $3$ is given by
\begin{align}
    \mathcal{J}(\boldsymbol{\psi}^{3}) = \mathbb{E}^{\boldsymbol{\psi}^{3}}[c(X, U^1, U^2, U^3)] & \nonumber \\
    = \sum_{n \in \{0,1\}} \mathbb{P}(Y^3 = n) \cdot \mathbb{E}^{\boldsymbol{\psi}^{3}}\Big[c\big(&X, \Gamma^{[{{3}},{{1}}]}_n(Y^{{1}}, Y^{{2}}), \nonumber \\
    &\Gamma^{[{{3}},{{2}}]}_n(Y^{{2}}), \Gamma^{[{{3}},{{3}}]}_n \big) | Y^3 = n\Big], \nonumber
\end{align}
where $\boldsymbol{\psi}^{3} = (\psi^{[{{3}},{{1}}]}, \psi^{[{{3}},{{2}}]}, \psi^{[{{3}},{{3}}]})$.
Minimizing the performance criterion above is equivalent to minimizing the original expected cost (see Lemma \ref{lem_equivalence}). In this new cost, we can minimize the two terms separately, leading to a computation of $(2^4 \times 2^2 \times 2) + (2^4 \times 2^2 \times 2) = 256$ strategies. Note that the number prescriptions obtained for agent $3$ are the same as those obtained using the common information approach in \cite{17}. Thus, the common information approach also leads to a computation of $256$ strategies.}

\textcolor{black}{
Now, consider the prescriptions for agent $2$. For $k \in \{1,2\}$, we have $\Gamma^{[{{2}},{{k}}]} = \psi^{[{{2}},{{k}}]}(A^2) = \psi^{[{{2}},{{k}}]}(Y^2, Y^3)$, where $\psi^{[{{2}},{{k}}]}$ is the prescription law for the prescription of agent $2$ for agent $k$. For the case $k=3$, $\Gamma^{[{{2}},{{3}}]} = \Gamma^{[{{3}},{{3}}]}$ (see Lemmas \ref{lem_psi_relation1}-\ref{lem_psi_relation2}). Let $\Gamma^{[{{3}},{{k}}]}_{m,n} = \psi^{[{{3}},{{k}}]}(Y^2 = m, Y^3 = n)$, $n,m \in \{0,1\}$ and $k \in \{1,2\}$. The performance criterion for agent $2$ is
\begin{multline*}
    \mathcal{J}(\boldsymbol{\psi}^{2}) = \mathbb{E}^{\boldsymbol{\psi}^{2}}[c(X, U^1, U^2, U^3)]  \\
    = \sum_{m, n \in \{0,1\}} \mathbb{P}(Y^2 = m, Y^3 = n)  \\
    \cdot \mathbb{E}^{\boldsymbol{\psi}^{2}}\Big[c\big(X, \Gamma^{[{{2}},{{1}}]}_{m,n}(Y^{{1}}), \Gamma^{[{{2}},{{2}}]}_{m,n}, \Gamma^{[{{3}},{{3}}]}_n\big) | Y^2 = m, Y^3 = n\Big],
\end{multline*}
where $\boldsymbol{\psi}^{2} = (\psi^{[{{2}},{{1}}]}, \psi^{[{{2}},{{2}}]}, \psi^{[{{2}},{{3}}]})$. We show in Lemma 5 that the optimal cost achieved by agent $2$ is the same as the one of agent $3$.
However, in the cost for agent $2$, we can minimize the four terms separately, having to compute only $(2^2 \times 2 \times 2) \times 4 = 64$ strategies. Using a similar procedure, we can show that the optimal prescription strategy for agent $1$ can also be obtained with $64$ computations. Note that the cost for agents $1$, $2$, and $3$ are different, but all lead to the same optimal cost.
The costs for agents $1$ and $2$ lead to an improvement in computation time of optimal strategies when compared to the common information approach. This example shows that even in static systems, there are avenues beyond the common information approach to improve the computation time for optimal strategies.}
\textcolor{black}{\begin{remark}
Consider a larger set of agents $\mathcal{K} = \{1, \dots, K\}$, with the same information structure $M^{k+1} \subseteq M^k$, $k \in \mathcal{K} \setminus \{K\}$. The agent $K$ must still compute the same number of strategies as in the common information approach. However, the number of computations decreases with the index of the agent from $K$ all the way to $1$, implying that the fastest computation of optimal strategies is achieved by agent $1$.
\end{remark}}

\subsection{Organization}

The remainder of the paper is organized as follows: In Section II, we present the model of the decentralized stochastic problem with a word-of-mouth information structure. In section III, we introduce the prescription approach along with the associated properties of the prescription functions. In Section IV, we use the prescription approach to derive structural results from the point of view of various agents in the system. \textcolor{black}{In Section V, we simplify the structural results and compare them with the common information approach.}
Finally, we conclude with some observations and future research directions in Section VI.

\section{Problem Formulation}

\subsection{Network Description}

Consider a directed network of $K \in \mathbb{N}$ agents represented by a strongly connected graph $\mathcal{G} =(\mathcal{K}, \mathcal{E})$, where $\mathcal{K} := \{1,\dots,K\}$ is the set of agents and $\mathcal{E}$ is the set of links. A direct communication link from any agent $k \in \mathcal{K}$ to any agent $j \in \mathcal{K}$ is denoted by $(k,j) \in \mathcal{E}$, which is characterized by a delay of $\delta^{[{{k}},{{j}}]} \in \mathbb{N}$ time steps for transferring information from $k$ to $j$.

When agent $k$ sends out information to some agent $j$ in her neighborhood, we call the act \textit{transmission of information}. The information transmitted by agent $k$ at time $t \in \mathbb{N}$ is received by agent $j$ at time $t + \delta^{[{{k}},{{j}}]}$. For any agent $k$, the acts of \textcolor{black}{reception} and transmission of information occur at different instances within every time step, as discussed in Section II-D.

\begin{definition}
\label{path}
\textcolor{black}{For two agents $k, j \in \mathcal{K}$, a \textit{path}  $q^{[{{k}},{{j}}]}_a$, $a \in \mathbb{N}$, from $k$ to $j$ is the sequence $\{k_n\}_{n=1}^m$, $m \in \mathcal{K}$, such that: (1) $k_1 = k$ and $k_m = j$, (2) $k_n \in \mathcal{K}$ for $n \in \mathcal{K}$, and (3) there exists a link $(k_{n-1},k_n) \in \mathcal{E}$ for $n \in \mathcal{K}\setminus\{1\}$.}
\end{definition}

\textcolor{black}{Let $\mathcal{Q}^{[{{k}},{{j}}]} = \{q^{[{{k}},{{j}}]}_a: a \in \mathbb{N}\}$ be the set with all paths from agent $k$ to agent $j$.}

\begin{definition}
\label{path_delay}
Consider agents $k,j \in \mathcal{K}$ with a path $q^{[{{k}},{{j}}]}_a \in \mathcal{Q}^{[{{k}},{{j}}]}$. The \textit{communication delay} $d^{[{{k}},{{j}}]}_a \in \mathbb{N}$ for $q^{[{{k}},{{j}}]}_a$ is {$d^{[{{k}},{{j}}]}_a = \delta^{[k,k_2]}+\dots+\delta^{[k_{m-1},j]},$}
where $\delta^{[k_{n-1},k_n]}$ is the delay in information transfer through the link $(k_{n-1},k_n) \in \mathcal{E}$.
\end{definition}

The \textit{information path} from agent $k$ to agent $j$ in the network is the path with the least possible delay.

\begin{definition}
The \textit{information path} from $k$ to $j$, denoted by $(k \rightarrow j)$, is the path $q^{[{{k}},{{j}}]} \in \mathcal{Q}^{[{{k}},{{j}}]}$ such that {$d^{[{{k}},{{j}}]} = \min\left\{d_1^{[{{k}},{{j}}]},\dots,d_b^{[{{k}},{{j}}]}\right\},$}
where $b := |\mathcal{Q}^{[{{k}},{{j}}]}|$.
\end{definition}

The property of strong connectivity in our network ensures that there is always an information path $(k \rightarrow j)$ from every agent $k \in \mathcal{K}$ to every agent $j \in \mathcal{K}$. \textcolor{black}{When there are multiple paths with the same least delay, any of these paths can be designated to be the information path.} We denote the delay associated with the information path $(k \rightarrow j)$  by $d^{[{{k}},{{j}}]}$ and, by convention, let $d^{[k,k]} = 0$. Since the links in the network are directed, the delay $d^{[{{k}},{{j}}]}$ in communication from $k$ to $j$ is not necessarily the same as the delay $d^{[j,k]}$ from $j$ to $k$.

\subsection{System Description}
The network of agents is considered a discrete-time system that evolves up to a finite time horizon $T \in \mathbb{N}$. At any time $t = 0,1,\dots,T$, the state of the system $X_t$ takes values in a finite set $\mathcal{X}$ and the control variable $U_t^k$ generated by agent $k \in \mathcal{K}$ takes values in a finite set $\mathcal{U}^k$. Let ${U}_t^{1:K}$ denote the vector $(U_t^1,\dots,U_t^K)$. Starting at the initial state $X_0$, the evolution of the system follows the state equation
\begin{equation}
X_{t+1}=f_t\left(X_t,U_t^{1:K},W_t\right), \label{st_eq}
\end{equation}
where $W_t$ is the external disturbance to the system represented as a random variable taking values in a finite set $\mathcal{W}$. At time $t$ every agent $k$ makes an observation $Y_t^k$, 
\begin{equation}
Y_t^k=h_t^k(X_t,V_t^k), \label{ob_eq}
\end{equation}
taking values in a finite set $\mathcal{Y}^k$ through a noisy sensor, where $V_t^k$ takes values in the finite set $\mathcal{V}^k$ and represents the noise in measurement.
Agent $k$ selects a control action $U_t^k$ from the set of feasible control actions $\mathcal{U}^k$ as a function of her information structure. The information structure is different for each agent $k \in \mathcal{K}$ because of the means of communication and topology of the network. We discuss the information structure in detail in Section II-E. After each agent $k$ generates a control action $U_t^k$, the system incurs a cost $c_t(X_t,U_t^1,\dots,U_t^K) \in \mathbb{R}$.

\subsection{Assumptions}

In our modeling framework above, we impose the following assumptions:

\begin{assumption}
\label{top_asu}
The network topology is strongly connected with self-loops, known a priori, and does not change with time.
\end{assumption}

With a known and invariable network topology, every agent can keep track of what information is accessible to other agents in the network.

\begin{assumption}
\label{prim_asu}
The external disturbances $\{W_t: t=0,\dots,T\}$ and noises in measurement $\{V_t^k: t=0,\dots,T; k=1,\dots,K\}$ are sequences of independent random variables that are also independent of each other and of the initial state $X_0$. 
\end{assumption}

The external disturbances, noises in measurement, and initial state are referred to as the \textit{primitive} random variables, and their probability distributions are known to all decision makers. This allows us to analyse the system as a controlled Markov chain.

\begin{assumption}
\label{func_asu}
The state functions $(f_{t}: t=0,\dots,T)$, observation functions $(h_{t}^{k}: t=0,\dots,T; k=1,\dots,K)$, the cost functions $(c_{t}: t=0,\dots,T),$ and the set of all feasible control strategies $G$ are known to all agents.
\end{assumption}

These functions and the set of feasible control strategies (defined in Section II-E) form the basis of the decision making problem.

\begin{assumption}
Each agent perfectly recalls all information that enters her memory.
\end{assumption}

Perfect recall of the data from the memory of every agent is an essential assumption for the structural results derived in this paper.

\begin{figure}[ht!]
  \centering
  \captionsetup{justification=centering}
  \includegraphics[width=0.9\linewidth, keepaspectratio]{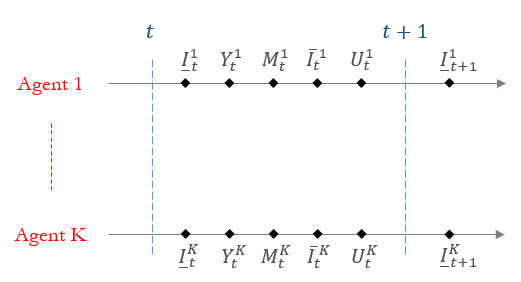}
  \caption{Sequence of activities}
  \label{fig:update}
\end{figure}

We summarize below the sequence of activities taken by agent $k \in \mathcal{K}$ at time $t$ (see Fig. \ref{fig:update}):
\begin{enumerate}
\item The state $X_t$ is updated based on (\ref{st_eq}).
\item Agent $k$ receives information from all agents in $\mathcal{K}$, denoted by $\underline{I}_t^k$.
\item Agent $k$ makes an observation about the state $Y_t^k$ based on (\ref{ob_eq}).
\item Agent $k$ updates her memory, $M_t^k$, defined in the next section, on a given protocol.
\item Agent $k$ transmits information, $\overline{I}_t^k$, to every agent $j \in \mathcal{K},$ which is directly connected with $k$.
\item Agent $k$ generates a control action $U_t^k$.
\end{enumerate}

\subsection{Information Structure of the System}

The information structure of the system is characterized by the network topology and delays along information paths described in Section I-B. In the word-of-mouth information structure, every agent $j \in \mathcal{K}$ at time $t$ transmits the information $\overline{I}_t^j := \{Y_{t}^j,U_{t-1}^j\}$ to every other agent in the network through relevant information paths. Agent $k \in \mathcal{K}$ receives information $\overline{I}_t^j$ at time $t + d^{[j,k]}$, where $d^{[j,k]}$ is the communication delay from $j$ to $k$.
Then, the information available to agent $k$ at time $t$ is the collection of information she received from every agent $j \in \mathcal{K}$ at time steps $0$ through $t$.

\begin{definition}
The \textit{memory} of agent $k \in \mathcal{K}$ is the set of random variables $M_t^k$ that takes values in the finite collection of sets $\mathcal{M}_t^k$, and is given by
\begin{align}
M_t^k := &\left\{Y^j_{0:t-d^{[j,k]}},U_{0:t-d^{[j,k]}-1}^j: j \in \mathcal{K} \right\}, \label{eq_mem}
\end{align}
where $d^{[j,k]}$ is the delay in information transfer from every agent $j \in \mathcal{K}$ to agent $k$.
\end{definition}

At time $t$, agent $k$ accesses her memory $M_t^k$ to generate a control action, namely,
\begin{gather}
    U_t^k := g_t^k(M_t^k), \label{u_basic}
\end{gather}
where $g_t^k$ is the control law of agent $k$ at time $t$. We denote the control strategy for each agent by $\boldsymbol{g}^k := (g_0^k,\dots,g_T^k)$ and the control strategy of the system by $\boldsymbol{g} := (\boldsymbol{g}^1,\dots,\boldsymbol{g}^K)$. The set of all feasible control strategies is denoted by $G$.
The performance criterion for the system is given by the total expected cost:
\begin{equation}
\textbf{Problem 1:}~~~~ \mathcal{J}(\boldsymbol{g}) = \mathbb{E}^{\boldsymbol{g}}\left[\sum_{t=0}^T{c_t(X_t,U_t^{1},\dots,U_t^K)}\right], \label{per_cri}
\end{equation}
where the expectation is with respect to the joint probability distribution on all random variables.
The optimization problem is to select the optimal control strategy $\boldsymbol{g}^* \in G$ that minimizes the performance criterion in \eqref{per_cri}, given the probability distributions of the primitive random variables $\{X_0,W_{0:T},V_{0:T}^1,\dots,V_{0:T}^K\}$, and functions $\left\{c_t,f_t,h_t^{1:K}:t=0,\dots,T\right\}$.

\section{\textcolor{black}{The Prescription Approach}}

In this section, we present the prescription approach to derive structural results for our model. \textcolor{black}{In our exposition, we refer only to the memory $M_t^k$ of agent $k \in \mathcal{K}$.}
We begin by defining the set of agents indexed beyond ${{k}}$ in the network \textcolor{black}{to simplify notation.} 

\begin{definition}\label{def:setB}
\textcolor{black}{For agent $k\in\mathcal{K}$, the set of agents indexed beyond ${{k}}$ is $\mathcal{B}^{{{k}}} := \{{{i}} \in \mathcal{K}: {{i}} \geq {{k}}\}$.}
\end{definition}

\subsection{Construction of Prescriptions}

For an agent $k \in \mathcal{K}$, we consider a scenario where the control action $U_t^{{k}}$ is generated in two stages:

(1) Agent $k$ generates a function based on information which is a subset of her memory $M_t^{{k}}$.

(2) This function takes as an input the \textcolor{black}{complement} of the subset in (1), and yields the control action $U_t^{{k}}$.

We call these functions \textit{prescriptions}. These functions allow us to construct an optimization problem of selecting the optimal \textit{prescription strategy} that is equivalent to the problem of selecting the optimal control strategy $\boldsymbol{g}^{*k}$, as shown in Section III-C. In this subsection, we construct the subset of the memory $M_t^{{k}}$ and prescriptions for \textcolor{black}{every} agent ${{k}}$ without changing the information structure of the system.

\begin{definition} \label{def_A}
Let $M_t^{{k}}$ be the memory of an agent $k \in \mathcal{K}$ at time $t$. The \textit{accessible information} of agent ${{k}}$ is the set of random variables $A_t^{{k}}$, that takes values in a finite collection of sets $\mathcal{A}_t^{{k}}$, such that
\begin{gather}
    A_t^{{k}} = \bigcap_{{{j}}=1}^{{k}}\left(M_t^{{j}}\right). \label{ainfo_def}
\end{gather}
\end{definition}
For example, we can write \eqref{ainfo_def} for agent ${{k}}=1$ as $A_t^{{1}} = M_t^{{1}}$, and for agent ${{k}}=2$ as $A_t^{{2}} = M_t^{{1}} \cap M_t^{{2}}.$
Based on Definition \ref{def_A}, the accessible information $A_t^{{k}}$ has \textcolor{black}{the following properties:
\begin{align}
    & A_{t-1}^{{k}} \subseteq A_t^{{k}}, \label{ainfo_prop_2} \\
    & A_t^{{i}} \subseteq A_t^{{k}}, \quad \forall {{i}} \in \mathcal{B}^{{k}}, \label{ainfo_prop_1}
\end{align}}
where $\mathcal{B}^{{k}}$ is the set of agents beyond ${{k}}$ (Definition \ref{def:setB}).

\begin{definition}
The \textit{new information} for agent $k$ at time $t$ is the set of random variables $Z_t^{{k}}$ that takes values in a finite collection of sets $\mathcal{Z}_t^{{k}}$, such that
$Z_t^{{k}} := A_t^{{k}} \backslash A^{{k}}_{t-1}.$
\end{definition}

Note that in \eqref{ainfo_prop_1} that the accessible information $A_t^{{i}}$ of an agent ${{i}} \in \mathcal{B}^{{k}}$ is a subset of the memory $M_t^{{k}}$ of agent ${{k}}$.

\begin{definition}
The \textit{inaccessible information} of agent ${{k}} \in \mathcal{K}$   with respect to accessible information $A_t^{{i}}$, ${{i}} \in \mathcal{B}^{{k}}$, is the set of random variables $L_t^{[{{k}},{{i}}]}$ that takes values in a finite collection of sets $\mathcal{L}_t^{[{{k}},{{i}}]}$, such that
\begin{gather}
    L_t^{[{{k}},{{i}}]} := M_t^{{k}} \setminus A_t^{{i}}. \label{inacc_def}
\end{gather}
\end{definition}

Note that the inaccessible information of agent ${{k}}$ with respect to her own accessible information $A_t^{{k}}$ is given by {$L_t^{[{{k}},{{k}}]} := M_t^{{k}} \setminus A_t^{{k}}.$  Furthermore,} the pair of sets $A_t^{{i}}$ and $L_t^{[{{k}},{{i}}]}$ forms a partition of the set $M_t^{{k}}$, such that
\begin{gather}
    M_t^{{k}} = \{L_t^{[{{k}},{{i}}]},A_t^{{i}}\}, \quad \forall {{i}} \in \mathcal{B}^{{k}}. \label{eq_partition}
\end{gather}
We use the partitions of the memory through accessible and inaccessible information to define the \textit{prescription function}.

\begin{definition}
The \textit{prescription function} $\Gamma_t^{[{{k}},{{i}}]}$ of agent ${{k}}$ for an agent ${{i}} \in \mathcal{K}$ is the mapping
\begin{align} \label{pres_func_def}
    \Gamma_t^{[{{k}},{{i}}]} :
    \begin{cases}
        \mathcal{L}_t^{[{{i}},{{k}}]} \to \mathcal{U}^{{i}}, \quad  \text{if } {{i}} \not\in \mathcal{B}^{{k}}, \\
        \mathcal{L}_t^{[{{i}},{{i}}]} \to \mathcal{U}^{{i}}, \quad  \text{if } {{i}} \in \mathcal{B}^{{k}},
    \end{cases}
\end{align}
that takes values in a set of feasible prescription functions $\mathscr{G}_t^{[{{k}},{{i}}]}$.
\end{definition}

\begin{remark}
In Definition 8, the inaccessible information of an agent $k$ is defined with respect to agent ${{i}} \in \mathcal{B}^{{k}}$. Note that in the first part of \eqref{pres_func_def}, we have ${{i}} \not\in \mathcal{B}^{{k}}$, which implies that ${{k}} \in \mathcal{B}^{{i}}$ instead.
\end{remark}

The prescription function of agent ${{k}}$ for itself at time $t$ is $\Gamma_t^{[{{k}},{{k}}]} :\mathcal{L}_t^{[{{k}},{{k}}]} \to \mathcal{U}^{{k}}$. 
Every prescription function $\Gamma^{[{{k}},{{i}}]}_t$ is generated as follows
\begin{align} \label{eq_gen_pres}
    \Gamma_t^{[{{k}},{{i}}]} :=
    \begin{cases}
    {\psi}_t^{[{{k}},{{i}}]}(A_t^{{k}}), \quad \text{if } {{i}} \not\in \mathcal{B}^{{k}}, \\
    {\psi}_t^{[{{k}},{{i}}]}(A_t^{{i}}), \quad \text{if } {{i}} \in \mathcal{B}^{{k}},
    \end{cases}
\end{align}
where we call ${\psi}_t^{[{{k}},{{i}}]}$ the \textit{prescription law} of agent ${{k}}$ for agent ${{i}}$.
We call $\boldsymbol{\psi}^{{k}} := ({\psi}^{[{{k}},{{1}}]}_{0:t},\dots,{\psi}^{[{{k}},{{K}}]}_{0:t})$  the \textit{prescription strategy} of agent ${{k}}$. The set of feasible prescription strategies for agent ${{k}}$ is denoted by $\Psi^{{k}}$. Note that the prescription of agent $k$ for itself is $\Gamma_t^{[{{k}},{{k}}]} = \psi_t^{[{{k}},{{k}}]}(A_t^k).$

\begin{remark}
The prescription $\Gamma_t^{{[{{k}},{{i}}]}}$ of agent $k$ for agent $i$ is only available to agent $k$. The equivalent prescription available to agent $i$ is $\Gamma_t^{{[{{i}},{{i}}]}}$. The relationship between the two is given in Lemmas \ref{lem_psi_relation1} and \ref{lem_psi_relation2}.
\end{remark}

\begin{remark}
Every agent needs to generate prescriptions corresponding to every other agent in the system in order to utilize the structural results presented in this paper. This is highlighted in Section III-D when we introduce the \textit{information state}.
\end{remark}

Next, we define the \textit{complete prescription} of an agent ${{k}}$ to simplify the notation in Section IV.

\begin{definition}
The \textit{complete prescription} for an agent ${{k}} \in \mathcal{K}$   is given by the function
\begin{align}
    \Theta_t^{{k}} : \;
    \mathcal{L}_t^{[{{1}},{{k}}]} \times \dots \times \mathcal{L}_t^{[{{k}},{{k}}]} &\times \mathcal{L}_t^{[{{k+1}},{{k+1}}]} \times \dots \times \mathcal{L}_t^{[{{K}},{{K}}]} \nonumber \\
    &\longrightarrow \mathcal{U}^{{1}} \times \dots \times \mathcal{U}^{{K}},
\end{align}
which takes values in the set of functions $\mathscr{G}_t^{{k}}$.
\end{definition}

\textcolor{black}{Note the complete prescription for agent $k = {{1}}$ is $\Theta_t^{{1}} : \mathcal{L}_t^{[{{1}},{{1}}]} \times \dots \times \mathcal{L}_t^{[{{K}},{{K}}]} \to \mathcal{U}^{{1}} \times \dots \times \mathcal{U}^{{K}}.$}
The complete prescription for agent ${{k}}$  is constructed as $\Theta_t^{{k}} = (\Gamma_t^{[{{k}},{{1}}]},\dots,\Gamma_t^{[{{k}},{{K}}]})$.

\color{black}

\begin{remark}
The accessible information $A_t^{{k}}$ and inaccessible information $L_t^{[{{k}},{{i}}]}$ of an agent ${{k}} \in \mathcal{K}$ with respect to an agent ${{i}} \in \mathcal{B}^{{k}}$, depend on the respective indices of those agents. However, we do not restrict our attention to any special indexing pattern, and thus, our results in Section IV hold for all possible assignments of indices. Furthermore, our problem describes a sequential system, in accordance with the definition in \cite{witsenhausen1971information}. Thus, we can apply Witsenhausen's result on ordering agents \cite{witsenhausen1971information} to state that the optimal control strategies are independent of the indexing pattern of agents.
\end{remark}
\subsection{Illustrative Example}
To illustrate the partitioning of the memory, we present a simple example with three agents. The state of the system at time $t$ is a three-dimensional vector $X_t = (X_t^1,X_t^2,X_t^3)$, where each $X_t^k$ is the state of a local subsystem of agent $k \in \{1,2,3\}$. The state evolves according to
\begin{gather}
    X_{t+1} = f_t(X_t,U_t^1,U_t^2,U_t^3,W_t), \label{example_1}
\end{gather}
where $\{W_t:t=0,\dots,T\}$ denotes the set of mutually independent disturbances that are also independent from the initial state $X_0$. The observation of agent $k$ at time $t$ is noise-free, i.e., $Y_t^k = X_t^k$. The communication delays among agents are given by
$
    d^{[1,2]} = d^{[2,1]} = 
    d^{[1,3]} = d^{[3,1]} = 1,$ and $
    d^{[2,3]} = d^{[3,2]} = 2.
$
Thus, the memories of the three agents at time $t$ are given by
\begin{gather*}
    M_t^1 = \{X^1_{0:t},U^1_{0:t-1},X^2_{0:t-1},U^2_{0:t-2},X^3_{0:t-1},U^3_{0:t-2}\}, \\
    M_t^2 = \{X^1_{0:t-1},U^1_{0:t-2},X^2_{0:t},U^2_{0:t-1},X^3_{0:t-2},U^3_{0:t-3}\}, \\
    M_t^3 = \{X^1_{0:t-1},U^1_{0:t-2},X^2_{0:t-2},U^2_{0:t-3},X^3_{0:t},U^3_{0:t-1}\}.
\end{gather*}
Then, the accessible information of the three agents is given by,
\begin{align*}
    &A_t^{1} = \{X^1_{0:t},U^1_{0:t-1},X^2_{0:t-1},U^2_{0:t-2},X^3_{0:t-1},U^3_{0:t-2}\}, \\
    &A_t^{{2}} = \{X^1_{0:t-1},U^1_{0:t-2},X^2_{0:t-1},U^2_{0:t-2},X^3_{0:t-2},U^3_{0:t-3}\}, \\
    &A_t^{{3}} = \{X^1_{0:t-1},U^1_{0:t-2},X^2_{0:t-2},U^2_{0:t-3},X^3_{0:t-2},U^3_{0:t-3}\}.
\end{align*}
The inaccessible information of agent $1$ is $L_t^{{[1,1]}} = \emptyset,$ $L_t^{{[1,2]}} = \{X^1_t,U^1_{t-1},X^3_{t-1},U^3_{t-2}\}$, and $L_t^{{[1,3]}} = \{X^1_t,U^1_{t-1},X^2_{t-1},U^2_{t-2},X^3_{t-1},U^3_{t-2}\}$.
Similarly, the inaccessible information of agent $2$ is $L_t^{{[2,2]}} = \{X^2_t,U^2_{t-1}\}$ and $L_t^{{[2,3]}} = \{X^2_{t-1:t},U^2_{t-2:t-1}\}$,
and the accessible information for agent $3$ is $L_t^{{[3,3]}} = \{X^3_{t-1:t},U^3_{t-2:t-1}\}$.
The memories, accessible information and inaccessible information of the three agents are illustrated in Fig. \ref{fig:inaccessible} (using rectangles to represent sets). We observe that at any time $t$, for agent $1$, the system satisfies the properties:
\begin{gather}
    A_t^{3} \subset A_t^{2} \subset A_t^{1}, \nonumber \\
    M_t^{1} = \{A_t^{2},L_t^{[{{1}},{{2}}]}\} = \{A_t^{3},L_t^{[{{1}},{{3}}]}\}. \nonumber
\end{gather}
We can derive similar relationships for agents $2$ and $3$.

\color{black}

\begin{figure}[h!]
  \centering
  \captionsetup{justification=centering}
  \includegraphics[height = 8cm, keepaspectratio]{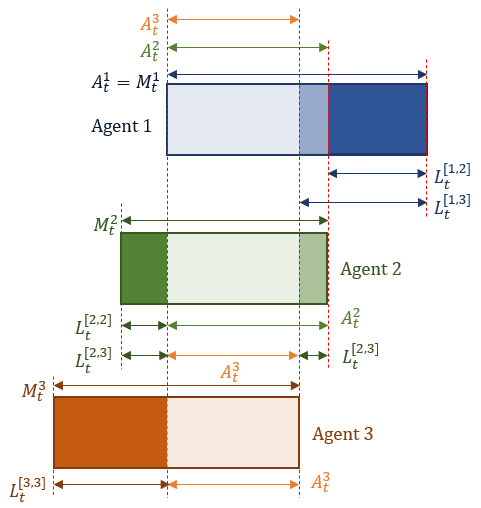}
  \caption{Memory partitions of three agents.}
  \label{fig:inaccessible}
\end{figure}

\subsection{Relationships Between Prescriptions and Control Laws}

In this subsection, Lemmas \ref{lem_psi_g_relation} and \ref{lem_psi_g_relation_inv} imply that every control action $U_t^{{k}}$ of an agent ${{k}} \in \mathcal{K}$ generated through a control strategy $\boldsymbol{g}$ can also be generated through an appropriate prescription strategy $\boldsymbol{\psi}^{{k}}$ and vice versa.

\begin{lemma} \label{lem_psi_g_relation}
For any given control strategy $\boldsymbol{g} \in G$, there exists a prescription strategy $\boldsymbol{\psi}^{{k}} \in \Psi^{{k}}$ such that 
\begin{gather}
    U_t^{{k}} = \Gamma_t^{[{{k}},{{k}}]}\left(L_t^{[{{k}},{{k}}]}\right) = g_t^k(M_t^k). \label{u_presc}
\end{gather}
\end{lemma}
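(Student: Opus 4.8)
The plan is to exhibit an explicit prescription strategy $\boldsymbol{\psi}^{{k}}$ constructed from $\boldsymbol{g}$ by \emph{partial evaluation} (currying) of each control law with respect to the accessible information. The key structural fact I will use is the partition identity \eqref{eq_partition}, namely $M_t^{{k}} = \{L_t^{[{{k}},{{k}}]}, A_t^{{k}}\}$, which lets me regard the control law $g_t^k : \mathcal{M}_t^{{k}} \to \mathcal{U}^{{k}}$ as a function of the pair $(L_t^{[{{k}},{{k}}]}, A_t^{{k}})$.

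First I would define the prescription law of agent ${{k}}$ for itself: for every realization $a \in \mathcal{A}_t^{{k}}$, let $\psi_t^{[{{k}},{{k}}]}(a)$ be the map $\mathcal{L}_t^{[{{k}},{{k}}]} \to \mathcal{U}^{{k}}$ given by $\ell \mapsto g_t^k(\{\ell, a\})$. Since the set of feasible prescription functions $\mathscr{G}_t^{[{{k}},{{k}}]}$ consists of all maps from $\mathcal{L}_t^{[{{k}},{{k}}]}$ into $\mathcal{U}^{{k}}$, each $\psi_t^{[{{k}},{{k}}]}(a)$ is a feasible prescription function, so $\psi_t^{[{{k}},{{k}}]}$ is a feasible prescription law. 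By \eqref{eq_gen_pres}, the prescription generated at time $t$ is $\Gamma_t^{[{{k}},{{k}}]} = \psi_t^{[{{k}},{{k}}]}(A_t^{{k}})$, and evaluating it at the inaccessible information yields $\Gamma_t^{[{{k}},{{k}}]}(L_t^{[{{k}},{{k}}]}) = g_t^k(\{L_t^{[{{k}},{{k}}]}, A_t^{{k}}\}) = g_t^k(M_t^k) = U_t^k$, which is exactly \eqref{u_presc}.

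To make $\boldsymbol{\psi}^{{k}}$ a full prescription strategy in $\Psi^{{k}}$, I would define the remaining prescription laws in the same manner, using the appropriate partition of $M_t^{{i}}$ supplied by \eqref{eq_partition}. For ${{i}} \in \mathcal{B}^{{k}}$ I use $M_t^{{i}} = \{L_t^{[{{i}},{{i}}]}, A_t^{{i}}\}$ and set $\psi_t^{[{{k}},{{i}}]}(a) : \ell \mapsto g_t^i(\{\ell, a\})$ for $a \in \mathcal{A}_t^{{i}}$; for ${{i}} \not\in \mathcal{B}^{{k}}$ (so ${{k}} \in \mathcal{B}^{{i}}$) I use $M_t^{{i}} = \{L_t^{[{{i}},{{k}}]}, A_t^{{k}}\}$ and set $\psi_t^{[{{k}},{{i}}]}(a) : \ell \mapsto g_t^i(\{\ell, a\})$ for $a \in \mathcal{A}_t^{{k}}$, matching the domains mandated by \eqref{pres_func_def} and \eqref{eq_gen_pres}. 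Repeating this for all $t = 0,\dots,T$ produces a well-defined $\boldsymbol{\psi}^{{k}} \in \Psi^{{k}}$ whose $[{{k}},{{k}}]$ component satisfies \eqref{u_presc} by construction.

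The argument is essentially bookkeeping: the only things to check are that currying a finite-domain function stays within the (unrestricted) feasible prescription class, and that the domain of each prescription law matches the accessible information dictated by \eqref{eq_gen_pres}. I expect the mildest point of friction to be keeping the index conventions straight for the off-diagonal laws — in particular, being careful that for ${{i}} \not\in \mathcal{B}^{{k}}$ the relevant partition of $M_t^{{i}}$ is the one with respect to $A_t^{{k}}$ rather than $A_t^{{i}}$. Beyond that there is no real obstacle, and no probabilistic reasoning is needed, since the claim is a pointwise (sample-path) identity.
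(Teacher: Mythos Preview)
Your proposal is correct and takes essentially the same approach as the paper: both define the prescription law by currying $g_t^k$ with respect to the accessible information and then invoke the partition \eqref{eq_partition} to recover $U_t^k$. If anything, you are slightly more thorough than the paper, which only spells out the construction of $\psi_t^{[{{k}},{{k}}]}$ and leaves the off-diagonal components of $\boldsymbol{\psi}^{{k}}$ implicit.
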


\begin{proof}
Let $A_t^{{k}}$ and $L_t^{[{{k}},{{k}}]}$ be the accessible and inaccessible information of agent $k$, respectively. For any control law ${g}_t^k : \mathcal{M}_t^k \mapsto \mathcal{U}^k$ that generates $U_t^k$ from \eqref{u_basic}, we can select a prescription law $\psi_t^{{k}} : \mathcal{A}_t^{{{k}}} \to \mathscr{G}_t^{[{{k}},{{k}}]}$ such that 
\begin{equation}
\Gamma_t^{[{{k}},{{k}}]}(\cdot) = \psi_t^{{k}}(A_t^{{{k}}})(\cdot).
\end{equation}
Then, the control action is
{$U_t^k = \Gamma_t^{[{{k}},{{k}}]}(L^{[{{k}},{{k}}]}_t) 
    =g_t^{{k}}(A^{{k}}_t,L^{[{{k}},{{k}}]}_t) = g_t^{{k}}(M_t^{{k}}).$}
\end{proof}


\begin{lemma}\label{lem_psi_g_relation_inv}
For any given prescription strategy $\boldsymbol{\psi}^{{k}} \in \Psi^{{k}}$, there exists a control strategy $\boldsymbol{g} \in G$ such that
\begin{gather}
     U_t^{{k}} = g_t^{{k}}(M_t^{{k}}) = \Gamma_t^{[{{k}},{{k}}]}(L_t^{[{{k}},{{k}}]}). \label{u_presc_inv}
\end{gather}
\end{lemma}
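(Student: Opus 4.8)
The plan is to prove the converse direction of the previous lemma: given a prescription strategy, we must construct a control strategy that produces the same control actions. The approach mirrors the proof of Lemma \ref{lem_psi_g_relation} but runs in the opposite direction, exploiting the fact that the accessible information $A_t^{{k}}$ and the inaccessible information $L_t^{[{{k}},{{k}}]}$ together partition the memory $M_t^{{k}}$, as established in \eqref{eq_partition}.

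First I would fix an arbitrary prescription strategy $\boldsymbol{\psi}^{{k}} \in \Psi^{{k}}$ and, in particular, the component prescription law $\psi_t^{[{{k}},{{k}}]} : \mathcal{A}_t^{{k}} \to \mathscr{G}_t^{[{{k}},{{k}}]}$ that agent ${{k}}$ uses for itself. By \eqref{eq_gen_pres}, this generates the prescription $\Gamma_t^{[{{k}},{{k}}]} = \psi_t^{[{{k}},{{k}}]}(A_t^{{k}})$, which by Definition 8 is a mapping from $\mathcal{L}_t^{[{{k}},{{k}}]}$ into $\mathcal{U}^{{k}}$, and the control action is $U_t^{{k}} = \Gamma_t^{[{{k}},{{k}}]}(L_t^{[{{k}},{{k}}]})$. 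Next I would define a candidate control law $g_t^{{k}} : \mathcal{M}_t^{{k}} \to \mathcal{U}^{{k}}$ by setting $g_t^{{k}}(M_t^{{k}}) := \psi_t^{[{{k}},{{k}}]}(A_t^{{k}})(L_t^{[{{k}},{{k}}]})$; this is well-defined precisely because, by \eqref{eq_partition} with ${{i}} = {{k}}$, the value of $M_t^{{k}}$ uniquely determines both $A_t^{{k}}$ and $L_t^{[{{k}},{{k}}]}$, so the composition on the right-hand side is a genuine (measurable, on finite sets automatic) function of $M_t^{{k}}$ alone. Collecting these laws over $t = 0,\dots,T$ gives $\boldsymbol{g}^{{k}}$, and completing with arbitrary feasible choices for the other agents yields $\boldsymbol{g} \in G$.

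It then remains to verify \eqref{u_presc_inv}: by construction $g_t^{{k}}(M_t^{{k}}) = \psi_t^{[{{k}},{{k}}]}(A_t^{{k}})(L_t^{[{{k}},{{k}}]}) = \Gamma_t^{[{{k}},{{k}}]}(L_t^{[{{k}},{{k}}]}) = U_t^{{k}}$, which is the required chain of equalities. I would also remark that this correspondence does not disturb the information structure, since $g_t^{{k}}$ reads only from $M_t^{{k}}$, consistent with \eqref{u_basic}.

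The only genuine obstacle is the well-definedness argument for $g_t^{{k}}$, i.e. ensuring that prescribing via $A_t^{{k}}$ and then evaluating at $L_t^{[{{k}},{{k}}]}$ really is a function of $M_t^{{k}}$ and not something that requires extra information; this is resolved by the partition property \eqref{eq_partition}. One should also check that the resulting $g_t^{{k}}$ lies in the feasible set $G$ — since $G$ is just the set of all control laws measurable with respect to the agents' memories and all sets here are finite, this is immediate. Everything else is a direct substitution.
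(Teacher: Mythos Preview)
Your proposal is correct and follows essentially the same approach as the paper: define $g_t^{{k}}(M_t^{{k}}) := \psi_t^{[{{k}},{{k}}]}(A_t^{{k}})(L_t^{[{{k}},{{k}}]})$ using the partition $M_t^{{k}} = \{A_t^{{k}},L_t^{[{{k}},{{k}}]}\}$ from \eqref{eq_partition}, then read off the chain of equalities. Your version is in fact more careful than the paper's, which compresses the construction into a single line and does not explicitly address well-definedness or feasibility.
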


\begin{proof}
For any prescription strategy $\boldsymbol{\psi}^{{k}},$ we can construct a control strategy $\boldsymbol{g}^{{k}}$ such that
{$U_t^{{k}} = g_t^{{k}}(M^{{k}}_t) 
= g_t^{{k}}(A^{{k}}_t,L^{[{{k}},{{k}}]}_t) = {\psi}_t^{[{{k}},{{k}}]}(A_t^{{k}})(L^{[{{k}},{{k}}]}_t).$}
\end{proof}

\begin{definition}
A \textit{positional relationship} from agent ${{k}} \in \mathcal{K}$ to agent ${{i}} \in \mathcal{K}$ is given by the function
    \textcolor{black}{$e^{[{{i}},{{k}}]}: \Psi^{{k}} \to \Psi^{{i}}.$}
\end{definition}

Next, we show the existence of a positional relationship $e^{[{{i}},{{k}}]}$ from any agent ${{k}} \in \mathcal{K}$ to any agent ${{i}} \in \mathcal{K}$ with desirable properties that allow us to construct optimal control strategies of all agents in $\mathcal{K}$ from the optimal prescription strategy of just one agent. The following result establishes that using a positional relationship  $e^{[{{i}},{{k}}]}=(e_1^{[{{i}},{{k}}]},\dots,e_T^{[{{i}},{{k}}]})$, agent ${{i}}$ can derive the prescription law for agent ${{j}} \in \mathcal{K}$ given the prescription law of agent ${{k}}$ for agent ${{j}}$, namely
\begin{gather}
   {\psi}_t^{[{{i}},{{j}}]} := e_t^{[{{i}},{{k}}]}\Big({\psi}_t^{[{{k}},{{j}}]}\Big), \quad \forall {{j}} \in \mathcal{K}. \label{eq_e}
\end{gather}

\begin{lemma} \label{lem_psi_relation1}
For any given prescription strategy $\boldsymbol{\psi}^{{k}}$ of agent ${{k}} \in \mathcal{K}$, there exists a positional relationship $e^{[{{i}},{{k}}]}$, ${{i}} \in \mathcal{B}^{{k}}$, such that a prescription strategy $\boldsymbol{\psi}^{{i}}$ of agent ${{i}}$ generated from \eqref{eq_e} yields:
\begin{align}
   \text{\emph{1. }} {\Gamma}_t^{[{{i}},{{j}}]}(L_t^{[{{j}},{{j}}]}) &= {\Gamma}_t^{[{{k}},{{j}}]}(L_t^{[{{j}},{{j}}]}),   \quad \text{if } {{j}} \in \mathcal{B}^{{i}}, \nonumber \\
   \text{\emph{2. }} {\Gamma}_t^{[{{i}},{{j}}]}(L_t^{[{{j}},{{i}}]}) &= {\Gamma}_t^{[{{k}},{{j}}]}(L_t^{[{{j}},{{j}}]}),   \quad \text{if } {{j}} \in \mathcal{B}^{{k}}, {{j}} \not\in \mathcal{B}^{{i}}, \nonumber \\
   \text{\emph{3. }} {\Gamma}_t^{[{{i}},{{j}}]}(L_t^{[{{j}},{{i}}]}) &= {\Gamma}_t^{[{{k}},{{j}}]}(L_t^{[{{j}},{{k}}]}),   \quad \text{if } {{j}} \not\in \mathcal{B}^{{k}}.
    \label{lem_3_condition}
\end{align}
\end{lemma}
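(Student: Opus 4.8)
The plan is to construct the positional relationship $e^{[{{i}},{{k}}]}$ \emph{explicitly}, independently of $\boldsymbol{\psi}^{{k}}$, and then read off the three identities by specializing a single computation. The organizing observation is that, by \eqref{pres_func_def}--\eqref{eq_gen_pres}, for any ${{j}} \in \mathcal{K}$ the prescription $\Gamma_t^{[{{k}},{{j}}]}$ is generated from $A_t^{m}$ with $m := \max\{{{j}},{{k}}\}$ and has domain $\mathcal{L}_t^{[{{j}},m]}$, while $\Gamma_t^{[{{i}},{{j}}]}$ is generated from $A_t^{m'}$ with $m' := \max\{{{j}},{{i}}\}$ and has domain $\mathcal{L}_t^{[{{j}},m']}$. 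Since ${{i}} \in \mathcal{B}^{{k}}$ we have $m' \geq m$, so \eqref{ainfo_prop_1} gives $A_t^{m'} \subseteq A_t^{m}$; and since $m \geq {{j}}$, \eqref{ainfo_def} gives $A_t^{m} = \bigcap_{l=1}^{m} M_t^{l} \subseteq M_t^{{j}}$. Writing $R_t := A_t^{m} \setminus A_t^{m'}$, we obtain from \eqref{inacc_def} the partitions $A_t^{m} = \{A_t^{m'}, R_t\}$ and $L_t^{[{{j}},m']} = M_t^{{j}} \setminus A_t^{m'} = \{R_t,\, L_t^{[{{j}},m]}\}$. Thus a realization of $L_t^{[{{j}},m']}$ is precisely a pair $(r,\ell)$ with $r$ a realization of $R_t$ and $\ell$ a realization of $L_t^{[{{j}},m]}$, and a realization of $A_t^{m}$ is recovered as a pair $(a,r)$ from a realization $a$ of $A_t^{m'}$ and $r$ of $R_t$.

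Next I would define the action of $e_t^{[{{i}},{{k}}]}$ on the prescription law $\psi_t^{[{{k}},{{j}}]}$ of agent ${{k}}$ for each agent ${{j}} \in \mathcal{K}$: let $\psi_t^{[{{i}},{{j}}]} := e_t^{[{{i}},{{k}}]}(\psi_t^{[{{k}},{{j}}]})$ be the prescription law assigning to each realization $a$ of $A_t^{m'}$ the map on $\mathcal{L}_t^{[{{j}},m']}$ that sends $(r,\ell)$ to $\psi_t^{[{{k}},{{j}}]}\big((a,r)\big)(\ell)$, where $(a,r)$ is the corresponding realization of $A_t^{m}$ described above. This is a well-defined element of $\mathscr{G}_t^{[{{i}},{{j}}]}$ (all underlying sets are finite) and depends only on $\psi_t^{[{{k}},{{j}}]}$, as \eqref{eq_e} requires, so $e^{[{{i}},{{k}}]}$ maps $\Psi^{{k}}$ into $\Psi^{{i}}$. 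Evaluating the generated prescription at the realized inaccessible information and using the two partitions above gives
\begin{align*}
\Gamma_t^{[{{i}},{{j}}]}\big(L_t^{[{{j}},m']}\big)
&= \psi_t^{[{{i}},{{j}}]}(A_t^{m'})\big(R_t,\, L_t^{[{{j}},m]}\big) \\
&= \psi_t^{[{{k}},{{j}}]}(A_t^{m})\big(L_t^{[{{j}},m]}\big) = \Gamma_t^{[{{k}},{{j}}]}\big(L_t^{[{{j}},m]}\big).
\end{align*}
It then remains to specialize $m$ and $m'$ in the three regimes for ${{j}}$. If ${{j}} \in \mathcal{B}^{{i}}$, then $m = m' = {{j}}$ (here $R_t = \emptyset$ and $\psi_t^{[{{i}},{{j}}]} = \psi_t^{[{{k}},{{j}}]}$), giving identity~1. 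If ${{j}} \in \mathcal{B}^{{k}}$ but ${{j}} \notin \mathcal{B}^{{i}}$, then $m = {{j}}$ and $m' = {{i}}$, giving identity~2. If ${{j}} \notin \mathcal{B}^{{k}}$, then $m = {{k}}$ and $m' = {{i}}$, giving identity~3. This covers every ${{j}} \in \mathcal{K}$ and completes the proof.

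The conceptual content is small; the step that needs care is the chain $A_t^{m'} \subseteq A_t^{m} \subseteq M_t^{{j}}$ and the induced decomposition $L_t^{[{{j}},m']} = \{R_t, L_t^{[{{j}},m]}\}$ — one has to verify, from the definitions of accessible/inaccessible information and the convention $d^{[{{k}},{{k}}]} = 0$ (so that $L_t^{[{{k}},{{k}}]} = M_t^{{k}} \setminus A_t^{{k}}$), that $R_t$ indeed lands inside $L_t^{[{{j}},m']}$ in each of the three regimes, so that the reindexing of $\psi_t^{[{{k}},{{j}}]}$ into a prescription law observing only $A_t^{m'}$ is legitimate. A secondary, routine point is checking that the constructed $\psi_t^{[{{i}},{{j}}]}$ remains feasible, i.e., stays in $\mathscr{G}_t^{[{{i}},{{j}}]}$; since it is a well-defined deterministic map between finite function spaces and, via Lemmas~\ref{lem_psi_g_relation}--\ref{lem_psi_g_relation_inv}, corresponds to a feasible control strategy, this is immediate.
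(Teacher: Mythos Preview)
Your proof is correct and follows essentially the same idea as the paper's: both exploit the nesting $A_t^{m'} \subseteq A_t^{m} \subseteq M_t^{{j}}$ (with your $m=\max\{{{j}},{{k}}\}$, $m'=\max\{{{j}},{{i}}\}$) to re-curry the prescription law so that the piece $R_t = A_t^{m}\setminus A_t^{m'}$ moves from the ``observed'' argument into the domain of the prescription. The paper routes this through an intermediate control law $g_t^{{j}}$ and treats the three cases for ${{j}}$ separately, whereas your $m,m'$ bookkeeping packages them into a single computation; the mathematical content is identical.
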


\begin{proof}
Let $g_t^{{j}}$ denote the control law of an agent $j \in \mathcal{K}$ at time $t$. To prove the result, we construct the control law $g_t^{{j}}$ and the prescription law ${\psi}_t^{[{{i}},{{j}}]}$ for three cases, given a prescription strategy $\boldsymbol{\psi}^{{k}}$.

1) If ${{j}} \in \mathcal{B}^{{i}}$, the control law $g_t^{{j}}$ can be constructed from the prescription law {${\psi}_t^{[{{k}},{{j}}]}$, namely,
 $g_t^{{j}}\left(A_t^{{j}},{L}_t^{[{{j}},{{j}}]}\right) = {\psi}_t^{[{{k}},{{j}}]}\left(A_t^{{j}}\right)\left({L}_t^{[{{j}},{{j}}]}\right).$
From \eqref{eq_gen_pres}, we have
    $\Gamma_t^{[{{i}},{{j}}]} = \psi_t^{[{{i}},{{j}}]}\left(A_t^{{j}}\right)$, for all ${{j}} \in \mathcal{B}^{{i}},$
and thus,
    $\psi_t^{[{{i}},{{j}}]}\left(A_t^{{j}}\right)\left(L_t^{[{{j}},{{j}}]}\right) = g_t^{{j}}\left(A_t^{{j}},{L}_t^{[{{j}},{{j}}]}\right).$}
Hence, 
    $\psi_t^{[{{i}},{{j}}]}(A_t^{{j}})(L_t^{[{{j}},{{j}}]}) = {\psi}_t^{[{{k}},{{j}}]}(A_t^{{j}})({L}_t^{[{{j}},{{j}}]}).$

2) If ${{j}}\in \mathcal{B}^{{k}}$ and ${{j}} \not \in \mathcal{B}^{{i}}$, the control law $g_t^{{j}}$ can be constructed from the prescription law ${\psi}_t^{[{{k}},{{j}}]},$ namely,
   {$g_t^{{j}}\left(A_t^{{j}},{L}_t^{[{{j}},{{j}}]}\right) = {\psi}_t^{[{{k}},{{j}}]}\left(A_t^{{j}}\right)\left({L}_t^{[{{j}},{{j}}]}\right).$
From \eqref{eq_gen_pres}, we have
    $\Gamma_t^{[{{i}},{{j}}]} = \psi_t^{[{{i}},{{j}}]}\left(A_t^{{i}}\right)$, for all ${{j}} \not\in \mathcal{B}^{{i}}.$
Thus,
    ${\psi}_t^{[{{i}},{{j}}]}\left(A_t^{{i}}\right)\left({L}_t^{[{{j}},{{i}}]}\right) = g_t^{{j}}\left(A_t^{{i}},{L}_t^{[{{j}},{{i}}]}\right)
    = g_t^{{j}}\left(A_t^{{j}},{L}_t^{[{{j}},{{j}}]}\right).$}
Hence,
${\psi}_t^{[{{i}},{{j}}]}(A_t^{{i}})({L}_t^{[{{j}},{{i}}]})
    ={\psi}_t^{[{{k}},{{j}}]}(A_t^{{j}})({L}_t^{[{{j}},{{j}}]}).$

3) If ${{j}}\not\in\mathcal{B}^{{k}}$, the control law $g_t^{{j}}$ can be constructed from the prescription law ${\psi}_t^{[{{k}},{{j}}]},$ namely,
    {$g_t^{{j}}\left(A_t^{{j}},{L}_t^{[{{j}},{{j}}]}\right) = g_t^{{j}}\left(A_t^{{k}},{L}_t^{[{{j}},{{k}}]}\right) =
   {\psi}_t^{[{{k}},{{j}}]}\left(A_t^{{k}}\right)\left({L}_t^{[{{j}},{{k}}]}\right).$
From \eqref{eq_gen_pres}, we have
    $\Gamma_t^{[{{i}},{{j}}]} = \psi_t^{[{{i}},{{j}}]}\left(A_t^{{i}}\right)$, for all ${{j}} \not\in \mathcal{B}^{{i}}.$
Thus,
    ${\psi}_t^{[{{i}},{{j}}]}\left(A_t^{{i}}\right)\left({L}_t^{[{{j}},{{i}}]}\right) = g_t^{{j}}\left(A_t^{{i}},{L}_t^{[{{j}},{{i}}]}\right)
    = g_t^{{j}}\left(A_t^{{k}},{L}_t^{[{{j}},{{k}}]}\right).$}
Hence,
${\psi}_t^{[{{i}},{{j}}]}(A_t^{{i}})({L}_t^{[{{j}},{{i}}]}) =
    {\psi}_t^{[{{k}},{{j}}]}(A_t^{{k}})({L}_t^{[{{j}},{{k}}]}).$

To complete the proof, note that we can define a positional relationship $e^{[{{i}},{{k}}]} : \Psi^{{k}} \to \Psi^{{i}}$ with $e^{[{{i}},{{k}}]}=(e_1^{[{{i}},{{k}}]},\dots,e_T^{[{{i}},{{k}}]})$ such that \eqref{eq_e} implies the results of steps (1), (2), and (3) with ${{i}} \in \mathcal{B}^{{k}}$.
\end{proof}

\begin{lemma} \label{lem_psi_relation2}
For any given prescription strategy $\boldsymbol{\psi}^{{k}}$ of agent $k \in \mathcal{K}$, there exists a positional relationship $e^{[{{i}},{{k}}]}$, ${{i}} \not\in \mathcal{B}^{{k}}$, such that a prescription strategy $\boldsymbol{\psi}^{{i}}$ of agent ${{i}}$ generated from \eqref{eq_e} yields:
\begin{align}
    \text{\emph{1. }} {\Gamma}_t^{[{{i}},{{j}}]}(L_t^{[{{j}},{{j}}]}) &= {\Gamma}_t^{[{{k}},{{j}}]}(L_t^{[{{j}},{{j}}]}),   \quad \text{if } {{j}} \in \mathcal{B}^{{k}}, \nonumber \\
    \text{\emph{2. }} {\Gamma}_t^{[{{i}},{{j}}]}(L_t^{[{{j}},{{j}}]}) &= {\Gamma}_t^{[{{k}},{{j}}]}(L_t^{[{{j}},{{k}}]}),   \quad \text{if } {{j}} \in \mathcal{B}^{{i}}, {{j}} \not\in \mathcal{B}^{{k}}, \nonumber \\
    \text{\emph{3. }}  {\Gamma}_t^{[{{i}},{{j}}]}(L_t^{[{{j}},{{i}}]}) &= {\Gamma}_t^{[{{k}},{{j}}]}(L_t^{[{{j}},{{k}}]}),   \quad \text{if } {{j}} \not\in \mathcal{B}^{{i}}.
    \label{lem_4_condition}
\end{align}
\end{lemma}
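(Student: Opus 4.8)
The plan is to mirror the three-case argument of Lemma \ref{lem_psi_relation1}, but now with the roles of $k$ and $i$ reversed in the sense that $i \not\in \mathcal{B}^{k}$, i.e.\ $k \in \mathcal{B}^{i}$. First I would fix an arbitrary prescription strategy $\boldsymbol{\psi}^{{k}}$ of agent $k$. By Lemma \ref{lem_psi_g_relation_inv}, this induces a control strategy $\boldsymbol{g}$ so that for every agent $j$, the control law $g_t^{{j}}$ reproduces the control actions $U_t^{{j}}$ generated by $\boldsymbol{\psi}^{{k}}$ through the appropriate prescription of $k$ for $j$. The key bookkeeping fact I will use repeatedly is that for $i \not\in \mathcal{B}^{k}$ we have $A_t^{{k}} \subseteq A_t^{{i}}$ (by \eqref{ainfo_prop_1}, since $k \in \mathcal{B}^{i}$), and correspondingly the partitions $M_t^{{j}} = \{A_t^{{i}}, L_t^{[{{j}},{{i}}]}\} = \{A_t^{{k}}, L_t^{[{{j}},{{k}}]}\} = \{A_t^{{j}}, L_t^{[{{j}},{{j}}]}\}$ from \eqref{eq_partition} all describe the same memory; the different prescription arguments are just different re-partitionings of one and the same set $M_t^{{j}}$, so the control action is unchanged when we move among them.

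Next I would split into the three cases of \eqref{lem_4_condition} according to where $j$ sits relative to $\mathcal{B}^{k}$ and $\mathcal{B}^{i}$. In case (1), $j \in \mathcal{B}^{k}$: then also $j \in \mathcal{B}^{i}$ (since $i \not\in \mathcal{B}^k$ forces $\mathcal{B}^{k}\subseteq\mathcal{B}^{i}$), so by \eqref{eq_gen_pres} both $\psi_t^{[{{k}},{{j}}]}$ and $\psi_t^{[{{i}},{{j}}]}$ take $A_t^{{j}}$ as argument, and we define $e_t^{[{{i}},{{k}}]}$ to act as the identity on this component, giving ${\psi}_t^{[{{i}},{{j}}]}(A_t^{{j}}) = {\psi}_t^{[{{k}},{{j}}]}(A_t^{{j}})$ and hence equality of the prescriptions evaluated at $L_t^{[{{j}},{{j}}]}$. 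In case (2), $j \in \mathcal{B}^{i}$ but $j \not\in \mathcal{B}^{k}$: here $\psi_t^{[{{k}},{{j}}]}$ is a function of $A_t^{{k}}$ (first branch of \eqref{eq_gen_pres}) while $\psi_t^{[{{i}},{{j}}]}$ is a function of $A_t^{{j}}$ (second branch). Using the control law $g_t^{{j}}(A_t^{{k}},L_t^{[{{j}},{{k}}]}) = {\psi}_t^{[{{k}},{{j}}]}(A_t^{{k}})(L_t^{[{{j}},{{k}}]})$ and re-partitioning $M_t^{{j}} = \{A_t^{{j}},L_t^{[{{j}},{{j}}]}\}$, I set $g_t^{{j}}(A_t^{{j}},L_t^{[{{j}},{{j}}]})$ equal to the same value and read off $\psi_t^{[{{i}},{{j}}]}(A_t^{{j}})(L_t^{[{{j}},{{j}}]}) = {\psi}_t^{[{{k}},{{j}}]}(A_t^{{k}})(L_t^{[{{j}},{{k}}]})$; since $A_t^{{j}} \subseteq A_t^{{i}} \subseteq$ the information available, $A_t^{{k}}$ is a (measurable) function of $A_t^{{j}}$ via the network structure, so this defines $e_t^{[{{i}},{{k}}]}$ consistently on this component. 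In case (3), $j \not\in \mathcal{B}^{i}$ (hence also $j \not\in \mathcal{B}^{k}$): both prescription laws are of first-branch type, $\psi_t^{[{{k}},{{j}}]}$ a function of $A_t^{{k}}$ and $\psi_t^{[{{i}},{{j}}]}$ a function of $A_t^{{i}}$, and since $A_t^{{k}} \subseteq A_t^{{i}}$ I again use the control law $g_t^{{j}}(A_t^{{k}},L_t^{[{{j}},{{k}}]}) = {\psi}_t^{[{{k}},{{j}}]}(A_t^{{k}})(L_t^{[{{j}},{{k}}]})$, re-express it as $g_t^{{j}}(A_t^{{i}},L_t^{[{{j}},{{i}}]})$, and obtain ${\psi}_t^{[{{i}},{{j}}]}(A_t^{{i}})(L_t^{[{{j}},{{i}}]}) = {\psi}_t^{[{{k}},{{j}}]}(A_t^{{k}})(L_t^{[{{j}},{{k}}]})$.

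Finally, having specified the action of $e_t^{[{{i}},{{k}}]}$ on each component $\psi_t^{[{{k}},{{j}}]} \mapsto \psi_t^{[{{i}},{{j}}]}$ for every $j \in \mathcal{K}$, I would assemble $e^{[{{i}},{{k}}]} = (e_1^{[{{i}},{{k}}]},\dots,e_T^{[{{i}},{{k}}]}) : \Psi^{{k}} \to \Psi^{{i}}$ and note that \eqref{eq_e} then yields exactly the three identities of \eqref{lem_4_condition}, with the prescription strategy $\boldsymbol{\psi}^{{i}} = e^{[{{i}},{{k}}]}(\boldsymbol{\psi}^{{k}}) \in \Psi^{{i}}$ being feasible by construction. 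The step I expect to be the main obstacle is case (2): unlike the corresponding step in Lemma \ref{lem_psi_relation1}, here we move from a prescription indexed by the \emph{smaller} accessible set $A_t^{{k}}$ to one indexed by the \emph{even smaller} set $A_t^{{j}}$, and one must verify carefully that $A_t^{{k}}$ is indeed recoverable from $A_t^{{j}}$ (so that $e_t^{[{{i}},{{k}}]}$ is well defined as a map on prescription laws rather than requiring extra information) — this relies on Assumption \ref{top_asu} (known, time-invariant topology) together with the nesting $A_t^{{j}} \subseteq A_t^{{k}}$ that follows from \eqref{ainfo_prop_1} since $j \not\in \mathcal{B}^{k}$ means $k \in \mathcal{B}^{j}$, and on tracking which primitive variables lie in each accessible set via \eqref{ainfo_def} and \eqref{eq_mem}.
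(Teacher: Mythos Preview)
Your three-case construction via an intermediate control law $g_t^{{j}}$ is exactly what the paper intends: its own proof of Lemma~\ref{lem_psi_relation2} reads in full ``The proof is similar to the proof of Lemma~\ref{lem_psi_relation1}, and has been omitted.''

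One slip to correct, since you flag it as ``the main obstacle'': in your discussion of case~(2) you write that $A_t^{{j}}$ is ``even smaller'' than $A_t^{{k}}$ and later invoke ``the nesting $A_t^{{j}} \subseteq A_t^{{k}}$.'' The inclusion goes the other way. In case~(2) we have $j \not\in \mathcal{B}^{{k}}$, i.e.\ $j < k$, so $k \in \mathcal{B}^{{j}}$ and \eqref{ainfo_prop_1} gives $A_t^{{k}} \subseteq A_t^{{j}}$. This is precisely what makes $A_t^{{k}}$ trivially recoverable from $A_t^{{j}}$ (it is a subset), so the well-definedness of $e_t^{[{{i}},{{k}}]}$ on this component is immediate and no appeal to Assumption~\ref{top_asu} or to \eqref{eq_mem} is needed. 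Your earlier sentence ``$A_t^{{k}}$ is a (measurable) function of $A_t^{{j}}$'' already had the logic right; only the stated direction of the inclusion at the end is backwards.
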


\begin{proof}
\textcolor{black}{The proof is similar to the proof of Lemma \ref{lem_psi_relation1}, and has been omitted.}
\end{proof}

To this end, we use a relationship function $e^{[{{i}},{{k}}]}$ from every agent ${{k}} \in \mathcal{K}$ to every agent ${{i}} \in \mathcal{K}$, which satisfies Lemmas \ref{lem_psi_relation1} and \ref{lem_psi_relation2}. This implies that for any two agents ${{k}}$ and ${{i}}$, we have the relation
\begin{gather}
    U_t^{{i}} = {\Gamma}_t^{[{{i}},{{i}}]}(L_t^{[{{i}},{{i}}]}) =
    \begin{cases}
        {\Gamma}_t^{[{{k}},{{i}}]}(L_t^{[{{i}},{{k}}]}),   \quad \text{if } {{i}} \not\in \mathcal{B}^{{k}}, \\
        {\Gamma}_t^{[{{k}},{{i}}]}(L_t^{[{{i}},{{i}}]}),   \quad \text{if } {{i}} \in \mathcal{B}^{{k}}.
    \end{cases}
    \label{eq_u_with_another_psi}
\end{gather}

\subsection{The Prescription Problem}

Lemmas \ref{lem_psi_g_relation} through \ref{lem_psi_relation2} lead to \eqref{eq_u_with_another_psi}. This implies that the control action $U_t^{{i}}$ for an agent ${{i}} \in \mathcal{K}$ can be equivalently obtained through the prescription function ${\Gamma}_t^{[{{k}},{{i}}]}$ of any other agent ${{k}} \in \mathcal{K}$, if the corresponding inaccessible information were made available.
Thus, using \eqref{eq_u_with_another_psi}, we can write the cost incurred by the system from the point of view of agent $k \in \mathcal{K}$ at time $t$ as
\textcolor{black}{\begin{align}
    c_t(X_t,&U_t^1,\dots,U_t^K) \nonumber \\
    = \; &c_t\big(X_t,{\Gamma}_t^{[{{k}},1]}({L}_t^{[1,{{k}}]}),\dots,{\Gamma}_t^{[{{k}},{{k}}]}({L}_t^{[{{k}},{{k}}]}), \nonumber \\
    &{\Gamma}_t^{[{{k}},{{k+1}}]}({L}_t^{[{{k+1}},{{k+1}}]}).\dots,{\Gamma}_t^{[{{k}},K]}({L}_t^{[{{K}},K]})\big). \label{o_cost}
\end{align}}
We can then reformulate Problem 1 in terms of the prescription strategy of agent ${{k}}$.
The optimization problem for agent $k$ is to select the optimal prescription strategy $\boldsymbol{\psi}^{{*k}} \in \Psi^{{k}}$ that minimizes the performance criterion given by the total expected cost:
\textcolor{black}{\begin{multline}
\textbf{Problem 2:}~~~    \mathcal{J}^{{k}}(\boldsymbol{\psi}^{{k}}) =  \\
    \mathbb{E}^{\boldsymbol{\psi}^{{k}}} \Big[\sum_{t=0}^T{c_t\big(X_t,{\Gamma}_t^{[{{k}},1]}({L}_t^{[1,{{k}}]}),\dots,{\Gamma}_t^{[{{k}},{{k}}]}({L}_t^{[{{k}},{{k}}]})},  \\
    {\Gamma}_t^{[{{k}},{{k+1}}]}({L}_t^{[{{k+1}},{{k+1}}]}),\dots,{\Gamma}_t^{[{{k}},K]}({L}_t^{[{{K}},K]})\big)\Big]. \label{per_cri_2}
\end{multline}}


Note that this leads to a different optimization problem for every agent $k \in \mathcal{K}$. The next result shows that Problem 2 for every agent ${{k}} \in \mathcal{K}$ is equivalent to Problem 1.

\begin{lemma} \label{lem_equivalence}
For any agent ${{k}} \in \mathcal{K}$, Problem 2 is equivalent to Problem 1.
\end{lemma}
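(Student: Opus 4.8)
The plan is to establish a bijection between feasible control strategies $\boldsymbol{g} \in G$ and feasible prescription strategies $\boldsymbol{\psi}^{{k}} \in \Psi^{{k}}$ that preserves the realized cost at every time step, and hence the expected cost. The key tools are already in place: Lemmas \ref{lem_psi_g_relation} and \ref{lem_psi_g_relation_inv} give the two directions of the correspondence between a control strategy and a prescription strategy (via the agent's own prescription $\Gamma_t^{[{{k}},{{k}}]}$), while Lemmas \ref{lem_psi_relation1} and \ref{lem_psi_relation2}, combined into \eqref{eq_u_with_another_psi}, guarantee that the prescriptions $\Gamma_t^{[{{k}},{{i}}]}$ of agent ${{k}}$ for every other agent ${{i}}$ reproduce exactly the control actions $U_t^{{i}}$ that the other agents generate. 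So the substance of the proof is just bookkeeping: showing that these lemmas together identify the two optimization problems.

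First I would take an arbitrary $\boldsymbol{g} \in G$. Applying Lemma \ref{lem_psi_g_relation} to agent ${{k}}$ produces a prescription strategy $\boldsymbol{\psi}^{{k}} \in \Psi^{{k}}$ with $U_t^{{k}} = \Gamma_t^{[{{k}},{{k}}]}(L_t^{[{{k}},{{k}}]}) = g_t^k(M_t^k)$. Then, for each other agent ${{i}} \in \mathcal{K}$, I would invoke the positional relationships $e^{[{{k}},{{i}}]}$ (from Lemmas \ref{lem_psi_relation1} and \ref{lem_psi_relation2}) applied to the control law $g^i$ to obtain the prescription components $\psi_t^{[{{k}},{{i}}]}$, so that \eqref{eq_u_with_another_psi} holds, i.e. $U_t^{{i}} = \Gamma_t^{[{{k}},{{i}}]}(L_t^{[{{i}},{{k}}]})$ when ${{i}} \not\in \mathcal{B}^{{k}}$ and $U_t^{{i}} = \Gamma_t^{[{{k}},{{i}}]}(L_t^{[{{i}},{{i}}]})$ when ${{i}} \in \mathcal{B}^{{k}}$. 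Substituting these identities into the cost $c_t(X_t,U_t^1,\dots,U_t^K)$ gives precisely the integrand appearing in \eqref{per_cri_2}, and since the underlying primitive random variables and the state trajectory $X_{0:T}$ are unchanged, $\mathcal{J}(\boldsymbol{g}) = \mathcal{J}^{{k}}(\boldsymbol{\psi}^{{k}})$. Conversely, starting from an arbitrary $\boldsymbol{\psi}^{{k}} \in \Psi^{{k}}$, Lemma \ref{lem_psi_g_relation_inv} reconstructs a control strategy $\boldsymbol{g} \in G$ generating the same $U_t^{{k}}$; the construction of the other agents' control laws is exactly the inverse direction already spelled out inside the proofs of Lemmas \ref{lem_psi_relation1} and \ref{lem_psi_relation2}, and the same substitution shows $\mathcal{J}^{{k}}(\boldsymbol{\psi}^{{k}}) = \mathcal{J}(\boldsymbol{g})$.

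Having both maps, I would conclude that $\inf_{\boldsymbol{g} \in G} \mathcal{J}(\boldsymbol{g}) = \inf_{\boldsymbol{\psi}^{{k}} \in \Psi^{{k}}} \mathcal{J}^{{k}}(\boldsymbol{\psi}^{{k}})$, and that a minimizer of one problem maps to a minimizer of the other under the correspondence above; this is exactly the claimed equivalence. The main obstacle — really the only nontrivial point — is being careful about the \emph{measurability and information-structure constraints}: one must check that the prescription strategies produced from $\boldsymbol{g}$ are indeed \emph{feasible}, i.e. each $\psi_t^{[{{k}},{{i}}]}$ depends only on the permitted accessible information ($A_t^{{k}}$ or $A_t^{{i}}$ as dictated by \eqref{eq_gen_pres}), and symmetrically that the control laws reconstructed from $\boldsymbol{\psi}^{{k}}$ depend only on $M_t^{{i}}$. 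Both facts follow from the partition property \eqref{eq_partition} ($M_t^{{i}} = \{L_t^{[{{i}},\cdot]}, A_t^{\cdot}\}$) and from how the positional relationships were constructed in the proofs of Lemmas \ref{lem_psi_relation1}--\ref{lem_psi_relation2}, so I would simply cite those lemmas rather than redo the verification. Everything else is a direct term-by-term substitution into \eqref{o_cost} and then into the expectation \eqref{per_cri}.
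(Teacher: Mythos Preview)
Your proposal is correct and follows essentially the same approach as the paper: both rely on the identity \eqref{o_cost} (which the paper has already established from \eqref{eq_u_with_another_psi} via Lemmas \ref{lem_psi_relation1}--\ref{lem_psi_relation2}) to equate the two performance criteria, and on Lemmas \ref{lem_psi_g_relation} and \ref{lem_psi_g_relation_inv} for the two-way correspondence between control strategies and prescription strategies. The paper's proof is simply a terser version that cites \eqref{o_cost} and Lemmas \ref{lem_psi_g_relation}--\ref{lem_psi_g_relation_inv} directly, whereas you spell out the bijection and the feasibility check explicitly.
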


\begin{proof}
Equation \eqref{o_cost} implies that the performance criterion $\mathcal{J}^{{k}}(\boldsymbol{\psi}^{{k}})$ in \eqref{per_cri_2} is equal to the performance criterion $\mathcal{J}(\boldsymbol{g})$ in \eqref{per_cri}. Thus, given the optimal prescription strategy $\boldsymbol{\psi}^{{*k}}$, Lemmas \ref{lem_psi_g_relation} and \ref{lem_psi_g_relation_inv} imply that we can derive the optimal strategy $\boldsymbol{g}^*$ in Problem 1. 
\end{proof}

Next, we present an \textcolor{black}{equivalent state for Problem 2} for agent ${{k}}$, following the exposition presented in \cite{mahajan2008sequential}.

\begin{lemma} \label{state_suff_k}
A state sufficient for input-output mapping for agent ${{k}} \in \mathcal{K}$ is
\begin{equation}
    S^{{k}}_t := \left\{X_{t}, L_t^{[1,{{k}}]},\dots,L_t^{[{{k-1}},{{k}}]},L_t^{[{{k}},{{k}}]},\dots,L_t^{[{{K}},K]}\right\}. \label{S_k}
\end{equation}
\end{lemma}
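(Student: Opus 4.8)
The plan is to verify, for the ``coordinated system'' associated with agent ${{k}}$, the conditions that characterize a state sufficient for input--output mapping in the sense of \cite{mahajan2008sequential}. This system has the complete prescription $\Theta_t^{{k}}=(\Gamma_t^{[{{k}},1]},\dots,\Gamma_t^{[{{k}},K]})$ as its control input at time $t$, the cost $c_t(X_t,U_t^{1:K})$ as its instantaneous output, and the new information $Z_{t+1}^{{k}}=A_{t+1}^{{k}}\setminus A_t^{{k}}$ as its observation increment. Concretely, I would exhibit functions $\hat c_t,\hat h_t,\hat f_t$ such that (i) $c_t(X_t,U_t^{1:K})=\hat c_t(S_t^{{k}},\Theta_t^{{k}})$; (ii) $Z_{t+1}^{{k}}=\hat h_t(S_t^{{k}},\Theta_t^{{k}},W_t,V_{t+1}^{1:K})$; and (iii) $S_{t+1}^{{k}}=\hat f_t(S_t^{{k}},\Theta_t^{{k}},W_t,V_{t+1}^{1:K})$, where $W_t$ and $\{V_{t+1}^j:j\in\mathcal{K}\}$ are the only primitive random variables realized during the step that matter.

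Condition (i) is immediate: the arguments $L_t^{[1,{{k}}]},\dots,L_t^{[{{k}},{{k}}]},L_t^{[{{k+1}},{{k+1}}]},\dots,L_t^{[{{K}},K]}$ of the prescriptions making up $\Theta_t^{{k}}$ are exactly the non-state components of $S_t^{{k}}$ in \eqref{S_k}, and $X_t$ is its first component, so \eqref{o_cost} already writes $c_t$ as a function of $(S_t^{{k}},\Theta_t^{{k}})$. In particular $(S_t^{{k}},\Theta_t^{{k}})$ determines the whole control vector $U_t^{1:K}$ via \eqref{eq_u_with_another_psi}, a fact I would use repeatedly.

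For (ii)--(iii) I would proceed component by component. The state component is easy: $X_{t+1}=f_t(X_t,U_t^{1:K},W_t)$ is a function of $(S_t^{{k}},\Theta_t^{{k}},W_t)$ since $X_t\in S_t^{{k}}$ and $U_t^{1:K}$ is determined as above. For an inaccessible-information component $L_{t+1}^{[i,j]}$ (with $j={{k}}$ for $i\le{{k}}$ and $j=i$ for $i>{{k}}$), I would split $M_{t+1}^i=M_t^i\cup\Delta M_t^i$ with $\Delta M_t^i:=M_{t+1}^i\setminus M_t^i$ the batch of observations and control actions arriving at agent $i$ during the step. Using the monotonicity $A_t^{{k}}\subseteq A_{t+1}^{{k}}$ from \eqref{ainfo_prop_2} (and its analogues for the relevant indices), the ``old'' part $M_t^i\setminus A_{t+1}^{{k}}$ equals $L_t^{[i,j]}\setminus Z_{t+1}^{{k}}$, i.e.\ it is a component of $S_t^{{k}}$ with the newly-accessible variables deleted. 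The ``new'' part $\Delta M_t^i$ I would resolve via the explicit memory formula \eqref{eq_mem}: each control action in $\Delta M_t^i$ is some $U_\tau^j$ with $\tau\le t$, which is either already held verbatim by an agent whose inaccessible information sits in $S_t^{{k}}$, or is $U_t^j$ just generated by $\Theta_t^{{k}}$; each observation in $\Delta M_t^i$ is some $Y_\tau^j$ with $\tau\le t+1$, which is either already encoded in a component of $S_t^{{k}}$ (it is in transit: held by agent $j$ but excluded from the relevant intersection $A_\bullet^{{k}}$), or is $Y_{t+1}^j=h_{t+1}^j(X_{t+1},V_{t+1}^j)$, a function of the already-computed $X_{t+1}$ and fresh noise. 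Reassembling yields $L_{t+1}^{[i,j]}$, hence $S_{t+1}^{{k}}$ and $Z_{t+1}^{{k}}$, as functions of $(S_t^{{k}},\Theta_t^{{k}},W_t,V_{t+1}^{1:K})$.

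The main obstacle is precisely the bookkeeping in that last step: one must show that every delayed observation or control arriving during the time step, although it generally refers to a past state not contained in $S_t^{{k}}$, is already present verbatim inside one of the sets $L_t^{[i,{{k}}]}$ ($i\le{{k}}$) or $L_t^{[i,i]}$ ($i>{{k}}$). This needs a careful comparison of the delays $d^{[j,l]}$ against the destination delay, together with the defining intersection $A_t^{{k}}=\bigcap_{j=1}^{{{k}}}M_t^j$ of Definition \ref{def_A}; Assumptions \ref{top_asu}--\ref{prim_asu} (known, time-invariant, strongly connected topology and independent primitives) are what make this reconstruction well defined and noise-driven. Once (i)--(iii) hold, the claim follows from the corresponding result of \cite{mahajan2008sequential}, and $S_t^{{k}}$ can then drive the structural results and dynamic program of Section IV.
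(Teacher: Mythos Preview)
Your proposal is correct and follows essentially the same route as the paper: both verify Witsenhausen's three conditions for a state sufficient for input--output mapping, handling the cost via \eqref{o_cost}, the state component via \eqref{st_eq}, and the inaccessible-information components by arguing that each $L_{t+1}^{[i,\cdot]}$ lies in the union of the old inaccessible sets and the freshly generated $\{Y_{t+1}^j,U_t^j\}$. Your treatment is in fact slightly more careful than the paper's, since you include $W_t$ as an argument of $\hat h_t$ (needed whenever a fresh observation $Y_{t+1}^j$ lands in $Z_{t+1}^{{k}}$) and you spell out the delay bookkeeping that the paper leaves implicit.
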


\begin{proof}
The state $S^{{k}}_t$ satisfies the three properties stated by Witsenhausen \cite{witsenhausen1976some}:

1) There exist functions $\hat{f}^{{k}}_t$, for all $t = 0,\dots,T$ such that
  \begin{equation}
	S^{{k}}_{t+1} = \hat{f}^{{k}}_{t}(S^{{k}}_t, W_t, V_{t+1}^{1},\dots,V_{t+1}^{K}, \Theta_t^{{k}}). \label{eq_St_k1}
  \end{equation}
  
2) There exist functions $\hat{h}^{{k}}_t$, for all $t = 0,\dots,T$ such that
    \begin{equation}
	Z^{{k}}_{t+1} = \hat{h}^{{k}}_{t}(S^{{k}}_{t},\Theta_t^{{k}},V_{t+1}^{1},\dots,V_{t+1}^{K}). \label{eq_St_k2}
  \end{equation}
  
3) There exist functions $\hat{c}^{{k}}_t$, for all $t = 0,\dots,T$ such that
    \textcolor{black}{\begin{align}
	c_t(X_t,U_t^{1},\dots,U_t^{K}) &= \hat{c}^{{k}}_t(S^{{k}}_t,\Theta^{{k}}_t). \label{eq_St_k3}
  \end{align}}
The three equations above can each be verified by substitution of variables, as summarized below.

1) By definition, $S^k_{t+1} = \{X_{t+1},L_{t+1}^{[1,{{k}}]},\dots,L_{t+1}^{[{{k}},{{k}}]},\dots,$ $L_{t+1}^{[K,K]}\}$. We analyze each term individually. First, we have $X_{t+1}= f_{t}\Big(X_t,U_t^{1},\dots,U_t^{K}, W_t\Big)
    = f_{t}\Big(X_t,\Gamma_t^{[{{k}},1]}(L_t^{[1,{{k}}]}), \dots,\Gamma_t^{[{{k}},{{k}}]}(L_t^{[{{k}},{{k}}]}),\dots,\Gamma_t^{[{{k}},K]}(L_t^{[K,K]}),$ $ \; W_t\Big).$
We know that the new information added to the memory $M_t^{{k}}$ is a subset of all previously inaccessible information $\left\{L_{t}^{[1,{{k}}]},\dots,L_{t}^{[{{k}},{{k}}]},\dots,L_{t}^{[K,K]}\right\}$ and the most recent control actions and observations $\left\{Y^{1}_{t+1},U^{1}_t,\dots,Y^{K}_{t+1},U^{K}_t\right\}$.
For any $L_{t+1}^{[{{i}},{{k}}]}$, ${{i}} \in \mathcal{K}$, this implies that
    $L_{t+1}^{[{{i}},{{k}}]} \subseteq  \Big\{L_{t}^{[1,{{k}}]},\dots,L_{t}^{[{{k}},{{k}}]},\dots,L_{t}^{[K,K]}\Big\} \bigcup \Big\{Y^{1}_{t+1}, U^{1}_t,$ $\dots,Y^{K}_{t+1},U^{K}_t\Big\}.$
We can further simplify the final term as
    $\Big\{Y^{1}_{t+1},U^{1}_t,\dots,Y^{K}_{t+1},U^{K}_t\Big\}
    \subseteq \Big\{h^{1}_{t+1}(X_{t+1},V_{t+1}^{1}),$ $\dots,h^{K}_{t+1}(X_{t+1},V_{t+1}^{K}), \Gamma_t^{[{{k}},1]}(L_t^{[1,{{k}}]}),\dots,\Gamma_t^{[{{k}},{{k}}]}(L_t^{[{{k}},{{k}}]}), \dots,$ $\Gamma_t^{[{{k}},K]}(L_t^{[K,K]})\Big\}.$
This completes the proof of \eqref{eq_St_k1}.

2)  {$Z_{t+1}^{{k}} \subseteq \Big\{L_{t}^{[1,{{k}}]},\dots,L_{t}^{[{{k}},{{k}}]},$ $\dots,L_{t}^{[K,K]}\Big\} \bigcup\Big\{Y^{1}_{t+1}, U^{1}_t,\dots,Y^{K}_{t+1},U^{K}_t\Big\}$}, and 
the proof of \eqref{eq_St_k2} is completed {in a manner similar to part 1.}

3) We have already shown in part 1 that $U_t^{{i}}$ can be written as a function of $S_t^{{k}}$ and $\Theta_t^{{k}}$ for any agents two ${{k}}$ and ${{i}}$  . This completes the proof of \eqref{eq_St_k3}.
\end{proof}



\begin{lemma} \label{lem_s_relation}
Let $S_t^{{k}}$ be the state sufficient for input-output mapping of agent ${{k}} \in \mathcal{K}$. The state sufficient for input-output mapping $S_t^{{i}}$ of agent ${{i}} \in \mathcal{B}^{{k}}$ satisfies the following property
\begin{gather} \label{eq_s_relation}
    S^{{i}}_t = \Big\{S^{{k}}_t,A_t^{{k}} \setminus A_t^{{i}}\Big\}.
\end{gather}
\end{lemma}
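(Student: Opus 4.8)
The plan is to establish the set identity in \eqref{eq_s_relation} by comparing the two states term-by-term, using the definitions of accessible and inaccessible information together with the nesting property \eqref{ainfo_prop_1}. Recall from \eqref{S_k} that $S_t^{{i}} = \{X_t, L_t^{[1,{{i}}]},\dots,L_t^{[{{i-1}},{{i}}]},L_t^{[{{i}},{{i}}]},\dots,L_t^{[{{K}},K]}\}$ and similarly for $S_t^{{k}}$. The two states share the term $X_t$, and they share every term $L_t^{[{{j}},{{j}}]}$ with ${{j}} \in \mathcal{B}^{{i}}$ (and hence ${{j}} \in \mathcal{B}^{{k}}$, since ${{i}} \in \mathcal{B}^{{k}}$ implies $\mathcal{B}^{{i}} \subseteq \mathcal{B}^{{k}}$). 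So the real content is to show that the ``mismatched'' terms — namely $L_t^{[{{j}},{{i}}]}$ for ${{j}} < {{i}}$ in $S_t^{{i}}$ versus $L_t^{[{{j}},{{k}}]}$ for ${{j}} < {{k}}$ in $S_t^{{k}}$, along with the $L_t^{[{{j}},{{j}}]}$ for ${{k}} \leq {{j}} < {{i}}$ appearing in $S_t^{{k}}$ but as $L_t^{[{{j}},{{i}}]}$ in $S_t^{{i}}$ — differ exactly by the block of random variables $A_t^{{k}} \setminus A_t^{{i}}$.

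First I would fix notation and split into the relevant index ranges: for ${{j}} \in \mathcal{B}^{{i}}$ both states carry $L_t^{[{{j}},{{j}}]}$ verbatim, so those contribute nothing to the difference. For ${{j}}$ with ${{k}} \leq {{j}} < {{i}}$, the state $S_t^{{k}}$ carries $L_t^{[{{j}},{{j}}]} = M_t^{{j}} \setminus A_t^{{j}}$ while $S_t^{{i}}$ carries $L_t^{[{{j}},{{i}}]} = M_t^{{j}} \setminus A_t^{{i}}$; since $A_t^{{i}} \subseteq A_t^{{j}}$ by \eqref{ainfo_prop_1} (because ${{j}} < {{i}}$ means ${{i}} \in \mathcal{B}^{{j}}$), we get $L_t^{[{{j}},{{i}}]} = L_t^{[{{j}},{{j}}]} \cup (A_t^{{j}} \setminus A_t^{{i}})$. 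For ${{j}} < {{k}}$, $S_t^{{k}}$ carries $L_t^{[{{j}},{{k}}]} = M_t^{{j}} \setminus A_t^{{k}}$ and $S_t^{{i}}$ carries $L_t^{[{{j}},{{i}}]} = M_t^{{j}} \setminus A_t^{{i}}$; again $A_t^{{i}} \subseteq A_t^{{k}}$ gives $L_t^{[{{j}},{{i}}]} = L_t^{[{{j}},{{k}}]} \cup (A_t^{{k}} \setminus A_t^{{i}})$. The second step is then to observe that the union of all these ``extra'' pieces $A_t^{{j}} \setminus A_t^{{i}}$ (over ${{k}} \leq {{j}} < {{i}}$) and $A_t^{{k}} \setminus A_t^{{i}}$ (over ${{j}} < {{k}}$) is exactly $A_t^{{k}} \setminus A_t^{{i}}$: each such piece is contained in $A_t^{{k}} \setminus A_t^{{i}}$ since $A_t^{{j}} \subseteq A_t^{{k}}$ for ${{j}} \geq {{k}}$, and the whole of $A_t^{{k}} \setminus A_t^{{i}}$ is recovered from the ${{j}} < {{k}}$ terms (indeed already from any single one, e.g. ${{j}}=1$, since $A_t^{{k}} \subseteq M_t^{{1}}$, so $A_t^{{k}} \setminus A_t^{{i}} \subseteq M_t^{{1}} \setminus A_t^{{i}} = L_t^{[1,{{i}}]}$). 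Collecting the terms yields $S_t^{{i}} = \{S_t^{{k}}, A_t^{{k}} \setminus A_t^{{i}}\}$ as sets of random variables.

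The main obstacle I anticipate is purely bookkeeping: keeping straight which of the three index regimes (${{j}} < {{k}}$, ${{k}} \leq {{j}} < {{i}}$, ${{j}} \geq {{i}}$) each term falls into, and making sure that no element of $A_t^{{k}} \setminus A_t^{{i}}$ is accidentally already present in some $L_t^{[{{j}},{{k}}]}$ (it is not, since those inaccessible sets are by construction disjoint from $A_t^{{k}}$, hence from $A_t^{{k}}\setminus A_t^{{i}}$) and, conversely, that every such element does appear somewhere in $S_t^{{i}}$. Both checks are immediate from $L_t^{[{{j}},{{i}}]} = M_t^{{j}} \setminus A_t^{{i}}$ and the nesting \eqref{ainfo_prop_1}; I would state them explicitly to make the set equality airtight. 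No genuinely hard analytic step is involved — the result is a consequence of the lattice structure of the accessible-information sets under intersection.
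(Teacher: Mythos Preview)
Your proposal is correct and follows essentially the same route as the paper: split the index ${{j}}$ into the three regimes ${{j}}<{{k}}$, ${{k}}\le{{j}}<{{i}}$, and ${{j}}\ge{{i}}$, derive the decompositions $L_t^{[{{j}},{{i}}]}=L_t^{[{{j}},{{k}}]}\cup(A_t^{{k}}\setminus A_t^{{i}})$ and $L_t^{[{{j}},{{i}}]}=L_t^{[{{j}},{{j}}]}\cup(A_t^{{j}}\setminus A_t^{{i}})$ from \eqref{inacc_def} and \eqref{ainfo_prop_1}, and then collapse all the extra pieces to $A_t^{{k}}\setminus A_t^{{i}}$ via the nesting $A_t^{{i}}\subseteq\cdots\subseteq A_t^{{k}}$. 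Your additional disjointness and coverage checks are sound (note that the ${{j}}={{k}}$ term in the middle range already yields $A_t^{{k}}\setminus A_t^{{i}}$, so the argument also goes through when ${{k}}=1$ and the range ${{j}}<{{k}}$ is empty).
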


\begin{proof}
For any ${{i}} \in \mathcal{B}^{{k}}$ and ${{j}} \not \in \mathcal{B}^{{i}}$,  we have the following properties for the inaccessible information $L_t^{[{{j}},{{i}}]}$:
\begin{align} \label{eq_lem_8_inaccess}
    \text{1. } 
    &L_t^{[{{j}},{{i}}]} = L_t^{[{{j}},{{k}}]} \cup \big(A_t^{{k}} \setminus A_t^{{i}} \big), \quad \forall {{j}} \not \in \mathcal{B}^{{k}}, \nonumber \\
    \text{2. }
    &L_t^{[{{j}},{{i}}]} = L_t^{[{{j}},{{j}}]} \cup \big(A_t^{{j}} \setminus A_t^{{i}} \big), \quad \forall {{j}} \in \mathcal{B}^{{k}}, {{j}} \not \in \mathcal{B}^{{i}},
\end{align}
Substituting \eqref{eq_lem_8_inaccess} in $S_t^{{i}}$, it holds that
\begin{align} \label{lem_8_in_between}
    S^{{i}}_t = \Big\{&X_{t}, L_t^{[1,{{k}}]}, A_t^{{k}} \setminus A_t^{{i}},\dots,L_t^{[{{k}},{{k}}]}, A_t^{{k}} \setminus A_t^{{i}}, \nonumber \\
    &\dots,L_t^{[{{i-1}},{{i-1}}]},A_t^{{i-1}} \setminus A_t^{{i}} ,L_t^{[{{i}},{{i}}]},\dots,L_t^{[{{K}},{{K}}]} \Big\} \nonumber \\
    = \Big\{&S^{{k}}_t,A_t^{{k}} \setminus A_t^{{i}},A_t^{{k+1}} \setminus A_t^{{i}},\dots,A_t^{{i-1}} \setminus A_t^{{i}} \Big\}.
\end{align}
From \eqref{ainfo_prop_1},
{$A_t^{{K}} \subseteq A_t^{{K-1}} \subseteq \dots \subseteq A_t^{{k}},$ and substituting in \eqref{lem_8_in_between} implies
$S^{{i}}_t
     = \Big\{S^{{k}}_1,A_t^{{k}} \setminus A_t^{{i}}\Big\}.$}
\end{proof}

From the point of view of any agent $k \in \mathcal{K}$, the system behaves as a Partially Observed Markov Decision Process (POMDP) with state $S_t^{{k}}$, control input $\Theta_t^{{k}}$, observation $Z_t^{{k}}$, and cost $\hat{c}^{{k}}_t(S^{{k}}_t,\Theta^{{k}}_t)$ at time $t$. The history of observations up to time $t$ is $Z^{{k}}_{0:t} = A^{{k}}_t$.
However, the prescription functions $\Gamma_t^{[{{k}},{{i}}]}$, ${{i}} \in \mathcal{B}^{{k}},$ are generated as functions of the accessible information $A_t^{{i}} \neq A_t^k$. \textcolor{black}{Note that agent $k$ utilizes different information when generating prescriptions for the other agents, implying that the problem of agent $k$ has a non-classical information structure.}
Thus, existing structural results for POMDP problems cannot be directly applied in Problem 2.

\begin{definition}
Let $S_t^{{k}}$ be the state, $A_{t}^{{k}}$ the accessible information, and $\Theta^{{k}}_{0:t-1}$  the control inputs at time $t$ for agent ${{k}}$. The \textit{information state} is a probability distribution $\Pi^{{k}}_t$ that takes values in the possible realizations $\mathscr{P}^{{k}}_t := \Delta(\mathcal{S}^{{k}}_t)$ such that,
\begin{equation}
\Pi^{{k}}_t(s^{{k}}_t) := \mathbb{P}^{\boldsymbol{\psi}^{{k}}}(S^{{k}}_t = s^{{k}}_t \big|A^{{k}}_t, \Theta^{{k}}_{0:t-1}).
\end{equation}
\end{definition}

\section{Results}

\subsection{Properties of the Information States}

In this subsection, we establish properties of the information state $\Pi_t^{{k}}$ for each agent ${{k}} \in \mathcal{K}$. The first property states that the information state $\Pi_t^{{k}}$ is independent from the prescription strategy $\boldsymbol{\psi}^{{k}}$.

\begin{lemma} \label{pi_k_1}
At time $t$, there exists a function $\tilde{f}_t^{{k}}$ independent from the prescription strategy $\boldsymbol{\psi}^{{k}}$ such that
\begin{equation}
    \Pi_{t+1}^{{k}} = \Tilde{f}_{t+1}^{{k}}(\Pi_t^{{k}},\Theta_t^{{k}},Z_{t+1}^{{k}}).
\end{equation}
\end{lemma}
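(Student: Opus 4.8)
The plan is to establish the recursion for $\Pi_t^{k}$ by a standard nonlinear-filtering / belief-update argument, exploiting that $(S_t^{k}, Z_t^{k}, \Theta_t^{k})$ form a controlled POMDP (Lemma \ref{state_suff_k}) and that the only new randomness entering $S_{t+1}^{k}$ and $Z_{t+1}^{k}$ comes from primitive variables that are independent of the past. Concretely, I would compute, for each candidate realization $s_{t+1}^{k} \in \mathcal{S}_{t+1}^{k}$,
\[
\Pi_{t+1}^{k}(s_{t+1}^{k}) = \mathbb{P}^{\boldsymbol{\psi}^{k}}\bigl(S_{t+1}^{k} = s_{t+1}^{k} \bigm| A_{t+1}^{k}, \Theta_{0:t}^{k}\bigr),
\]
and rewrite the conditioning variables using $A_{t+1}^{k} = \{A_t^{k}, Z_{t+1}^{k}\}$ (since $Z_{t+1}^{k} := A_{t+1}^{k} \setminus A_t^{k}$). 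Applying Bayes' rule, this conditional probability equals
\[
\frac{\sum_{s_t^{k}} \mathbb{P}^{\boldsymbol{\psi}^{k}}\bigl(S_{t+1}^{k} = s_{t+1}^{k}, Z_{t+1}^{k} = z_{t+1}^{k} \bigm| S_t^{k} = s_t^{k}, \Theta_t^{k}\bigr)\,\Pi_t^{k}(s_t^{k})}{\sum_{\tilde{s}_{t+1}^{k}}\sum_{s_t^{k}} \mathbb{P}^{\boldsymbol{\psi}^{k}}\bigl(S_{t+1}^{k} = \tilde{s}_{t+1}^{k}, Z_{t+1}^{k} = z_{t+1}^{k} \bigm| S_t^{k} = s_t^{k}, \Theta_t^{k}\bigr)\,\Pi_t^{k}(s_t^{k})},
\]
where the numerator's transition kernel is obtained from \eqref{eq_St_k1}–\eqref{eq_St_k2}: given $S_t^{k} = s_t^{k}$ and the prescription $\Theta_t^{k}$, the pair $(S_{t+1}^{k}, Z_{t+1}^{k})$ is a deterministic function of the primitive variables $W_t, V_{t+1}^{1}, \dots, V_{t+1}^{K}$, whose distributions are fixed and known (Assumption \ref{prim_asu}).

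The three facts that make the argument go through, and which I would spell out, are: (i) the Markov property — conditioned on $S_t^{k}$ and $\Theta_t^{k}$, the next state-observation pair $(S_{t+1}^{k}, Z_{t+1}^{k})$ is independent of $(A_t^{k}, \Theta_{0:t-1}^{k})$, because all the dependence flows through $S_t^{k}$ (this follows from Witsenhausen's "sufficient for input-output mapping" property and the independence of primitives); (ii) the transition kernel $\mathbb{P}(S_{t+1}^{k}, Z_{t+1}^{k} \mid S_t^{k}, \Theta_t^{k})$ does not depend on $\boldsymbol{\psi}^{k}$ — it is determined by $\hat{f}_t^{k}, \hat{h}_t^{k}$ and the primitive distributions alone, not by how the prescriptions were generated; and (iii) $\Pi_t^{k}$ itself summarizes all the relevant information in $(A_t^{k}, \Theta_{0:t-1}^{k})$ about $S_t^{k}$. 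Combining these, the entire right-hand side of the Bayes expression depends on the past only through $\Pi_t^{k}$, through $\Theta_t^{k}$, and through the new observation $Z_{t+1}^{k}$; hence we may define $\tilde{f}_{t+1}^{k}$ to be exactly this map, and it is manifestly independent of $\boldsymbol{\psi}^{k}$.

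**The main obstacle** I anticipate is point (ii) — showing the transition kernel is strategy-independent — because the problem of agent $k$ has a non-classical information structure (as the paper itself emphasizes just before the Definition of $\Pi_t^{k}$): the prescriptions $\Gamma_t^{[k,i]}$ for $i \in \mathcal{B}^{k}$ are generated from $A_t^{i} \neq A_t^{k}$, so a priori one might worry that conditioning on $A_t^{k}$ does not "pin down" the prescriptions, and that the dependence of $\Theta_t^{k}$ on variables outside $A_t^{k}$ leaks back into the belief update. The resolution is that in the information state we condition on $\Theta_{0:t-1}^{k}$ explicitly (they are part of the conditioning $\sigma$-algebra), and in the recursion $\Theta_t^{k}$ appears as a given argument of $\tilde{f}_{t+1}^{k}$ rather than being marginalized out; once $\Theta_t^{k}$ is fixed, equations \eqref{eq_St_k1}–\eqref{eq_St_k2} express $(S_{t+1}^{k}, Z_{t+1}^{k})$ purely in terms of $S_t^{k}$ and fresh primitive noise, so no strategy-dependence survives. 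I would therefore be careful to treat $\Theta_t^{k}$ as an exogenous input in the POMDP sense, mirroring the treatment in \cite{mahajan2008sequential}, and to invoke Assumption \ref{prim_asu} for the independence of $W_t, V_{t+1}^{1:K}$ from everything in $S_t^{k}$ and $A_t^{k}$.
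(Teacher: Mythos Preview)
Your proposal is correct and follows essentially the same route as the paper: both arguments expand $\Pi_{t+1}^{k}$ via Bayes' rule using $A_{t+1}^{k} = \{A_t^{k}, Z_{t+1}^{k}\}$, invoke the state/observation equations \eqref{eq_St_k1}--\eqref{eq_St_k2} from Lemma~\ref{state_suff_k} together with Assumption~\ref{prim_asu} to express the transition kernel in terms of primitive noise only, and conclude that the resulting update depends on the past solely through $(\Pi_t^{k}, \Theta_t^{k}, Z_{t+1}^{k})$. The only cosmetic difference is that the paper first computes the smoothed distribution $\mathbb{P}^{\boldsymbol{\psi}^{k}}(S_t^{k} \mid a_{t+1}^{k}, \theta_{0:t}^{k})$ and then pushes it forward through $\hat{f}_t^{k}$, whereas you write the one-step joint kernel $\mathbb{P}(S_{t+1}^{k}, Z_{t+1}^{k} \mid S_t^{k}, \Theta_t^{k})$ directly; these are equivalent formulations of the same filtering recursion.
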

\begin{proof}
See Appendix A.
\end{proof}

The second property of the information state $\Pi_t^{{k}}$ is that its evolution is Markovian.

\begin{lemma} \label{pi_k_2}
The evolution of the information state \textcolor{black}{$\Pi^{{k}}_t$} is a controlled Markov Chain with $\Theta^{{k}}_{t}$ as the control action at time $t$, \textcolor{black}{such that}
\begin{align}
\mathbb{P}(\Pi_{t+1}^{{k}}|A_t^{{k}},\Pi^{{k}}_{0:t}, \Theta^{{k}}_{0:t}) = \mathbb{P}(\Pi_{t+1}^{{k}}|\Pi^{{k}}_{t}, \Theta^{{k}}_{t}).
\end{align}
\end{lemma}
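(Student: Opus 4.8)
The plan is to show that $\Pi_{t+1}^{{k}}$ is a deterministic function of $(\Pi_t^{{k}}, \Theta_t^{{k}}, Z_{t+1}^{{k}})$ via Lemma \ref{pi_k_1}, and then argue that the randomness in $Z_{t+1}^{{k}}$, conditioned on the history $(A_t^{{k}}, \Pi_{0:t}^{{k}}, \Theta_{0:t}^{{k}})$, depends on that history only through the pair $(\Pi_t^{{k}}, \Theta_t^{{k}})$. This is the standard two-step argument for establishing that a (controlled) filtering process is itself a controlled Markov chain, so the work is in verifying the conditional-independence bookkeeping for our particular information state.

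Concretely, first I would write, for any measurable set $B$ in the space of information states,
\begin{align}
\mathbb{P}(\Pi_{t+1}^{{k}} \in B \mid A_t^{{k}}, \Pi_{0:t}^{{k}}, \Theta_{0:t}^{{k}})
= \mathbb{P}\big(\tilde{f}_{t+1}^{{k}}(\Pi_t^{{k}}, \Theta_t^{{k}}, Z_{t+1}^{{k}}) \in B \mid A_t^{{k}}, \Pi_{0:t}^{{k}}, \Theta_{0:t}^{{k}}\big), \nonumber
\end{align}
using Lemma \ref{pi_k_1}. Since $\tilde{f}_{t+1}^{{k}}$ is a fixed function independent of the prescription strategy, the right-hand side is determined once we know the conditional law of $Z_{t+1}^{{k}}$ given $(A_t^{{k}}, \Pi_{0:t}^{{k}}, \Theta_{0:t}^{{k}})$, together with the (history-measurable, indeed $A_t^{{k}}$-measurable) quantities $\Pi_t^{{k}}$ and $\Theta_t^{{k}}$. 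So it suffices to prove the claim
\begin{align}
\mathbb{P}(Z_{t+1}^{{k}} = z \mid A_t^{{k}}, \Pi_{0:t}^{{k}}, \Theta_{0:t}^{{k}}) = \mathbb{P}(Z_{t+1}^{{k}} = z \mid \Pi_t^{{k}}, \Theta_t^{{k}}). \label{eq_Z_markov_claim}
\end{align}

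For \eqref{eq_Z_markov_claim}, I would recall from Lemma \ref{state_suff_k} that the system behaves as a POMDP with state $S_t^{{k}}$, control $\Theta_t^{{k}}$, observation $Z_t^{{k}}$, and observation history $Z_{0:t}^{{k}} = A_t^{{k}}$; in particular, by \eqref{eq_St_k1} and \eqref{eq_St_k2}, $(S_{t+1}^{{k}}, Z_{t+1}^{{k}})$ is a function of $(S_t^{{k}}, \Theta_t^{{k}})$ and the primitive noises $W_t, V_{t+1}^{1:K}$, which by Assumption \ref{prim_asu} are independent of the past. Hence the conditional law of $Z_{t+1}^{{k}}$ given $(A_t^{{k}}, \Pi_{0:t}^{{k}}, \Theta_{0:t}^{{k}})$ can be obtained by first conditioning on $S_t^{{k}}$ and averaging: it equals $\sum_{s} \mathbb{P}(Z_{t+1}^{{k}} = z \mid S_t^{{k}} = s, \Theta_t^{{k}}) \, \mathbb{P}(S_t^{{k}} = s \mid A_t^{{k}}, \Pi_{0:t}^{{k}}, \Theta_{0:t}^{{k}})$, where the first factor depends only on $\Theta_t^{{k}}$ because of the primitive-noise structure. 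Now $\mathbb{P}(S_t^{{k}} = s \mid A_t^{{k}}, \Pi_{0:t}^{{k}}, \Theta_{0:t}^{{k}})$: since $\Pi_{0:t}^{{k}}$ and $\Theta_{0:t-1}^{{k}}$ are functions of $A_t^{{k}}$ and $\Theta_t^{{k}}$ adds no information about $S_t^{{k}}$ beyond what is already captured, this conditional probability is exactly $\mathbb{P}(S_t^{{k}} = s \mid A_t^{{k}}, \Theta_{0:t-1}^{{k}}) = \Pi_t^{{k}}(s)$ by the definition of the information state. Substituting back, the conditional law of $Z_{t+1}^{{k}}$ is a function of $\Pi_t^{{k}}$ and $\Theta_t^{{k}}$ alone, which gives \eqref{eq_Z_markov_claim} and hence the lemma.

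The main obstacle I anticipate is the step establishing that $\Theta_t^{{k}}$ does not leak extra information about $S_t^{{k}}$ into the conditioning, i.e., that conditioning on $(A_t^{{k}}, \Theta_{0:t}^{{k}})$ rather than $(A_t^{{k}}, \Theta_{0:t-1}^{{k}})$ leaves the posterior on $S_t^{{k}}$ unchanged. This is true because, under a fixed prescription strategy $\boldsymbol{\psi}^{{k}}$, the prescriptions $\Theta_t^{{k}}$ are generated from the accessible information alone (it is $A_t^{{k}}$-measurable for the part $\Gamma_t^{[k,k]}$ and $A_t^{{i}} \subseteq A_t^{{k}}$-measurable for $\Gamma_t^{[k,i]}$, $i \in \mathcal{B}^{{k}}$, by \eqref{eq_gen_pres} and \eqref{ainfo_prop_1}), so $\Theta_t^{{k}}$ carries no information about $S_t^{{k}}$ that is not already in $A_t^{{k}}$; care is needed here because, unlike a classical POMDP, agent $k$ forms prescriptions for different agents using \emph{different} sub-$\sigma$-algebras of $A_t^{{k}}$, but all of them are still contained in $A_t^{{k}}$, which is all that the argument requires. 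Everything else is a routine application of Bayes' rule and the primitive-independence assumption, and is deferred to the appendix.
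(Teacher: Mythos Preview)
Your proposal is correct and follows essentially the same approach as the paper's proof in Appendix~B: both use Lemma~\ref{pi_k_1} to reduce to the conditional law of $Z_{t+1}^{{k}}$, then condition on $S_t^{{k}}$, invoke the primitive-noise independence from Assumption~\ref{prim_asu} together with \eqref{eq_St_k2}, and identify the posterior $\mathbb{P}(S_t^{{k}}=s_t^{{k}}\mid a_t^{{k}},\theta_{0:t}^{{k}},\pi_{0:t}^{{k}})$ with $\pi_t^{{k}}(s_t^{{k}})$ by noting that $\theta_t^{{k}}$ is determined by $(a_t^{{k}},\boldsymbol{\psi}^{{k}})$. Your discussion of the $A_t^{{k}}$-measurability of $\Theta_t^{{k}}$ is more explicit than the paper's, but the underlying argument is the same.
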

\begin{proof}
See Appendix B.
\end{proof}

The third property of the information state $\Pi_t^{{k}}$ is that the expected cost incurred by the system at time $t$ can be written as a function of $\Pi_t^{{k}}$.

\begin{lemma} \label{pi_k_3}
There exists a function $\Tilde{c}^{{k}}_t$ such that
\begin{equation}
    \mathbb{E}^{\boldsymbol{\psi}^{{k}}}\big[\hat{c}^{{k}}_t(S^{{k}}_t,\Theta^{{k}}_t)|A^{{k}}_t,\Theta^{{k}}_{0:t}\big] = \Tilde{c}_t^{{k}}(\Pi_t^{{k}},\Theta_t^{{k}}).
\end{equation}
\end{lemma}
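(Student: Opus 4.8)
The plan is to express the conditional expectation on the left-hand side by conditioning on $S_t^{{k}}$ (which is available once $\Pi_t^{{k}}$ is known) and then show that the resulting quantity depends on $(A_t^{{k}},\Theta_{0:t}^{{k}})$ only through $(\Pi_t^{{k}},\Theta_t^{{k}})$. Concretely, I would first invoke Lemma~\ref{state_suff_k}, in particular property \eqref{eq_St_k3}, which gives the cost as a deterministic function $\hat c_t^{{k}}(S_t^{{k}},\Theta_t^{{k}})$ of the state and the complete prescription. Since $\Theta_t^{{k}} = (\Gamma_t^{[{{k}},1]},\dots,\Gamma_t^{[{{k}},K]})$ is generated from the accessible information sets $A_t^{{k}}$ and $A_t^{{i}}$ for ${{i}}\in\mathcal{B}^{{k}}$ via the prescription laws $\psi_t^{[{{k}},{{i}}]}$ in \eqref{eq_gen_pres}, it is measurable with respect to $A_t^{{k}}$ (recall $A_t^{{i}}\subseteq A_t^{{k}}$ for ${{i}}\in\mathcal{B}^{{k}}$ by \eqref{ainfo_prop_1}); hence $\Theta_t^{{k}}$ can be pulled out of the conditional expectation.

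The second step is the marginalization: write
\begin{align}
    \mathbb{E}^{\boldsymbol{\psi}^{{k}}}\big[\hat{c}^{{k}}_t(S^{{k}}_t,\Theta^{{k}}_t)\,\big|\,A^{{k}}_t,\Theta^{{k}}_{0:t}\big]
    = \sum_{s_t^{{k}} \in \mathcal{S}_t^{{k}}} \hat{c}^{{k}}_t(s^{{k}}_t,\Theta^{{k}}_t)\,
    \mathbb{P}^{\boldsymbol{\psi}^{{k}}}\big(S^{{k}}_t = s^{{k}}_t \,\big|\,A^{{k}}_t,\Theta^{{k}}_{0:t}\big). \nonumber
\end{align}
Next I would argue that $\mathbb{P}^{\boldsymbol{\psi}^{{k}}}(S^{{k}}_t = s^{{k}}_t \mid A^{{k}}_t,\Theta^{{k}}_{0:t}) = \mathbb{P}^{\boldsymbol{\psi}^{{k}}}(S^{{k}}_t = s^{{k}}_t \mid A^{{k}}_t,\Theta^{{k}}_{0:t-1}) = \Pi_t^{{k}}(s_t^{{k}})$: the first equality holds because $\Theta_t^{{k}}$ is a deterministic function of $A_t^{{k}}$ (it is $\psi_t^{[{{k}},\cdot]}$ applied to accessible information already in $A_t^{{k}}$), so conditioning on it adds nothing, and the second equality is simply the definition of the information state $\Pi_t^{{k}}$. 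Substituting this in, the right-hand side becomes $\sum_{s_t^{{k}}} \hat{c}^{{k}}_t(s^{{k}}_t,\Theta^{{k}}_t)\,\Pi_t^{{k}}(s_t^{{k}})$, which is a function only of $\Pi_t^{{k}}$ and $\Theta_t^{{k}}$; I would define $\Tilde{c}_t^{{k}}(\pi,\theta) := \sum_{s} \hat{c}^{{k}}_t(s,\theta)\,\pi(s)$ and this completes the argument.

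The main obstacle is the measurability claim that $\Theta_t^{{k}}$ is determined by $A_t^{{k}}$, which must be checked carefully because agent $k$ generates the prescriptions $\Gamma_t^{[{{k}},{{i}}]}$ for ${{i}}\in\mathcal{B}^{{k}}$ as functions of $A_t^{{i}}$, not $A_t^{{k}}$ — this is precisely the non-classical feature flagged in the text after Lemma~\ref{lem_s_relation}. The resolution is that $A_t^{{i}} \subseteq A_t^{{k}}$ by \eqref{ainfo_prop_1}, so although the prescription law only uses the smaller set, the resulting prescription is still a (measurable) function of the larger set $A_t^{{k}}$; thus conditioning on $\Theta_t^{{k}}$ in addition to $A_t^{{k}}$ is redundant. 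One should also note that $\Tilde{c}_t^{{k}}$ does not depend on $\boldsymbol{\psi}^{{k}}$, since $\hat c_t^{{k}}$ is a fixed function from Lemma~\ref{state_suff_k} and the averaging weights $\Pi_t^{{k}}(s_t^{{k}})$ are supplied as the argument rather than computed; the dependence on the strategy is entirely absorbed into the value of $\Pi_t^{{k}}$, consistent with Lemma~\ref{pi_k_1}.
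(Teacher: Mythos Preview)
Your proposal is correct and follows essentially the same approach as the paper's proof: marginalize over $S_t^{{k}}$, drop $\Theta_t^{{k}}$ from the conditioning because it is determined by $A_t^{{k}}$ under $\boldsymbol{\psi}^{{k}}$, identify the resulting conditional probability with $\Pi_t^{{k}}(s_t^{{k}})$, and define $\Tilde{c}_t^{{k}}(\pi,\theta)=\sum_s \hat{c}_t^{{k}}(s,\theta)\,\pi(s)$. Your additional discussion of the measurability of $\Theta_t^{{k}}$ via $A_t^{{i}}\subseteq A_t^{{k}}$ and the strategy-independence of $\Tilde{c}_t^{{k}}$ is more explicit than the paper's, but the argument is the same.
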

\begin{proof}
Let $a_{t}^{{k}}$, $\theta_t^{{k}},$ and $\pi^{{k}}_t$ be the realizations of the random variables $A_{t}^{{k}}$, $\Theta_t^{{k}}$, and the conditional probability distribution $\Pi^{{k}}_t$, respectively. We expand the following expectation 
\begin{align}
    \mathbb{E}^{\boldsymbol{\psi}^{{k}}}\big[\hat{c}^{{k}}_t(& S^{{k}}_t,\Theta^{{k}}_t)|a^{{k}}_t,\theta^{{k}}_{0:t}\big] \nonumber \\
    &=\sum_{s^{{k}}_t}{\hat{c}^{{k}}_t(s^{{k}}_t,\theta^{{k}}_t)}\cdot \mathbb{P}^{\boldsymbol{\psi}^{{k}}}(S^{{k}}_t=s^{{k}}_t|a_t^{{k}},\theta_{0:t}^{{k}}) \nonumber \\
    &=\sum_{s^{{k}}_t}{\hat{c}^{{k}}_t(s^{{k}}_t,\theta^{{k}}_t)} \cdot \pi_t^{{k}}(s_t^{{k}}) =:  \Tilde{c}_t^{{k}}(\pi_t^{{k}},\theta_t^{{k}}),
\end{align}
where the term $\theta_t^{{k}}$ was dropped from the conditioning since it can be derived given $a_t^{{k}}$ and the prescription strategy $\boldsymbol{\psi}^{{k}}$.
\end{proof}

In Lemmas \ref{pi_k_1}-\ref{pi_k_3}, we established that the information state $\Pi_t^{{k}}$ evolves as a controlled Markov chain with the input $\Theta_t^{{k}}$.

\subsection{Structural Results}

We start our exposition by presenting a structural result for agent $K$. By definition, the set of agents beyond $K$ contains only agent $K$, i.e., $\mathcal{B}^{K} = \{K\}$. Using \eqref{eq_gen_pres}, this implies that for all agents ${{k}} \in \mathcal{K}$, the prescription $\Gamma^{[{{K}},{{k}}]}_t$ is a function of the accessible information $A_t^{{K}}$. This leads to the following result derived in \cite{14} using the common information approach.

\begin{lemma}\label{lem_case_K}
Consider agent $K$. There exists an optimal prescription strategy $\boldsymbol{\psi}^{*K}$ of the form
\begin{equation}
    \Gamma_t^{*[{{K}},{{k}}]} = \psi_t^{*[{{K}},{{k}}]}(\Pi_t^{K}), \label{eq_common_info}
\end{equation}
that optimizes the performance criterion \eqref{per_cri_2} in Problem 2.
\end{lemma}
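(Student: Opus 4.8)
The plan is to reduce agent $K$'s problem to a standard partially observed Markov decision problem (POMDP) in which the information state $\Pi_t^{K}$ is a sufficient statistic, and then invoke the usual dynamic programming argument. The crucial observation is that $\mathcal{B}^{K} = \{K\}$ (Definition \ref{def:setB}), so for every agent $k \in \mathcal{K}$ with $k < K$ we have $k \notin \mathcal{B}^{K}$; consequently, \eqref{eq_gen_pres} gives $\Gamma_t^{[K,k]} = \psi_t^{[K,k]}(A_t^{K})$ for all $k < K$ and also $\Gamma_t^{[K,K]} = \psi_t^{[K,K]}(A_t^{K})$. Hence the complete prescription $\Theta_t^{K} = (\Gamma_t^{[K,1]},\dots,\Gamma_t^{[K,K]})$ is generated entirely from the accessible information $A_t^{K}$; unlike the problems of the other agents (whose prescriptions for agents beyond them depend on the strictly finer $A_t^{i}$), agent $K$'s problem has a classical information structure for the ``coordinator'' who chooses $\Theta_t^{K}$.

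Next I would recall from Section III-D that, from the viewpoint of agent $K$, the system is a POMDP with state $S_t^{K}$ (Lemma \ref{state_suff_k}), control input $\Theta_t^{K}$, per-stage cost $\hat{c}_t^{K}(S_t^{K},\Theta_t^{K})$, observation $Z_t^{K}$, and observation history $Z_{0:t}^{K} = A_t^{K}$. By Lemma \ref{pi_k_1} the information state satisfies $\Pi_{t+1}^{K} = \tilde{f}_{t+1}^{K}(\Pi_t^{K},\Theta_t^{K},Z_{t+1}^{K})$ with $\tilde{f}_{t+1}^{K}$ independent of $\boldsymbol{\psi}^{K}$; by Lemma \ref{pi_k_2} the process $\{\Pi_t^{K}\}$ is a controlled Markov chain with control $\Theta_t^{K}$; and by Lemma \ref{pi_k_3} the conditional expected cost equals $\tilde{c}_t^{K}(\Pi_t^{K},\Theta_t^{K})$. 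These three facts are precisely the hypotheses under which the information state is known to be a sufficient statistic for control in a POMDP.

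I would then carry out the standard backward-induction argument. Define $V_{T+1}^{K} \equiv 0$ and, for $t = T, T-1,\dots,0$,
\begin{equation*}
V_t^{K}(\pi) := \min_{\theta \in \mathscr{G}_t^{K}} \Big\{ \tilde{c}_t^{K}(\pi,\theta) + \mathbb{E}\big[ V_{t+1}^{K}\big(\tilde{f}_{t+1}^{K}(\pi,\theta,Z_{t+1}^{K})\big) \mid \Pi_t^{K} = \pi,\, \Theta_t^{K} = \theta \big] \Big\},
\end{equation*}
where the conditional law of $Z_{t+1}^{K}$ given $(\Pi_t^{K},\Theta_t^{K})$ is well defined and strategy-independent by Lemmas \ref{pi_k_1}--\ref{pi_k_2}. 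Using the smoothing property of conditional expectation, together with the fact that under any strategy $\Theta_s^{K}$ is a deterministic function of $A_s^{K}$ (so it can be dropped from the conditioning), one shows by induction on $t$ that $\mathbb{E}^{\boldsymbol{\psi}^{K}}\big[\sum_{s=t}^{T} \hat{c}_s^{K}(S_s^{K},\Theta_s^{K}) \mid A_t^{K}\big] \ge V_t^{K}(\Pi_t^{K})$ for every $\boldsymbol{\psi}^{K}$, with equality attained when, at each time $s \ge t$, $\Theta_s^{K}$ is chosen as a measurable minimizer $\psi_s^{*K}(\Pi_s^{K})$ of the right-hand side above (such a selection exists since all underlying spaces are finite). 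Taking $t = 0$ and noting that $\Pi_0^{K}$ is a fixed prior distribution shows that the strategy $\boldsymbol{\psi}^{*K}$ defined by $\Theta_t^{*K} = \psi_t^{*K}(\Pi_t^{K})$ minimizes \eqref{per_cri_2}; since each $\Gamma_t^{*[K,k]}$ is the $k$-th component of $\Theta_t^{*K} = \psi_t^{*K}(\Pi_t^{K})$, it is a function of $\Pi_t^{K}$, which is \eqref{eq_common_info}. The only genuinely non-routine step is the first one --- verifying that agent $K$'s coordinator information structure is classical, i.e., that $A_t^{K} = Z_{0:t}^{K}$ carries all the information on which $\Theta_t^{K}$ may depend --- after which the DP recursion and the sufficiency of $\Pi_t^{K}$ are the textbook argument, and the conclusion coincides with the common-information-approach result of \cite{14}.
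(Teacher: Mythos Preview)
Your proposal is correct and follows essentially the same route the paper invokes: the paper does not actually supply a proof of this lemma but instead notes that $\mathcal{B}^{K}=\{K\}$, so every $\Gamma_t^{[K,k]}$ is a function of $A_t^{K}$, and then cites \cite{14} (the common information approach) for the conclusion. Your argument makes explicit the POMDP reduction and backward-induction that \cite{14} carries out, using exactly the ingredients the paper has already assembled (Lemmas \ref{state_suff_k}, \ref{pi_k_1}--\ref{pi_k_3}); there is no genuine difference in approach, only in the level of detail.
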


For any two agents ${{k}} \in \mathcal{K}$ and ${{i}} \in \mathcal{B}^{{k}}$, we have $A_t^{{i}} \subseteq A_t^{{k}}$. Given the accessible information $A_t^{{k}}$ and the optimal prescription strategy $\boldsymbol{\psi}^{*{{k}}}$, agent ${{k}}$ can derive the optimal complete prescription $\Theta_t^{*{{i}}}$ for agent ${{i}}$ using Lemmas \ref{lem_psi_relation1} and \ref{lem_psi_relation2}. This property forms the basis of the structural result presented in this paper. \textcolor{black}{However,} we first prove the result for a two-stage system and a three-stage system, respectively, inspired by \cite{4, mahajan_online}. \textcolor{black}{The two-stage system runs for time $t=0,1$, and each agent $k \in \mathcal{K}$ selects two control actions $\{U_0^k, U_1^k\}$. Similarly, the three-stage system runs for time $t=0,1,2$, and each agent $k \in \mathcal{K}$ selects three control actions $\{U_0^k, U_1^k, U_2^k\}$. Let prescription strategy $\boldsymbol{\psi}_t^{{k}} = ({\psi}_t^{{[k,1]}}, \dots, {\psi}_t^{{[k,K]}})$}.

\begin{lemma} \label{two_stage_lemma}
\textbf{\emph{(Two-Stage Lemma)}} Consider an agent ${{k}}$ in a system with the time horizon $T=1$. If there exists an optimal prescription strategy $\boldsymbol{\psi}_1^{{*k}}$ that optimizes the performance criterion \eqref{per_cri_2} in Problem 2, it is of the form
\begin{align} \label{two_stage_eq}
\Gamma_1^{*[{{k}},{{i}}]} =
\begin{cases}
{\psi}_1^{*[{{k}},{{i}}]}(\Pi^{{k}}_1,\dots,\Pi^{K}_1), \quad \text{if } {{i}} \not\in \mathcal{B}^{{k}}, \\
{\psi}_1^{*[{{k}},{{i}}]}(\Pi^{{i}}_1,\dots,\Pi^{K}_1), \quad \text{if } {{i}} \in \mathcal{B}^{{k}}.
\end{cases}
\end{align}
\end{lemma}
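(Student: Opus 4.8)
The plan is to treat the two-stage problem as a standard terminal-stage optimization and push all the structure through the information states $\Pi^i_1$. At $t=1$ there is no future cost, so from the point of view of agent $k$ we must minimize, conditioned on the accessible information $A^k_1$, the expected terminal cost $\mathbb{E}^{\boldsymbol{\psi}^k}[\hat c^k_1(S^k_1,\Theta^k_1)\mid A^k_1,\Theta^k_{0:1}]$. By Lemma~\ref{pi_k_3} this equals $\tilde c^k_1(\Pi^k_1,\Theta^k_1)$, so agent $k$'s own prescription $\Gamma^{*[k,k]}_1$ can be chosen as a function of $\Pi^k_1$ alone. The content of the lemma, however, is about the \emph{other} prescriptions $\Gamma^{[k,i]}_1$ for $i\neq k$, which agent $k$ generates using $A^k_1$ but which act on inaccessible information of a different agent. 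The key observation is Lemma~\ref{lem_s_relation}: for $i\in\mathcal{B}^k$ we have $S^i_t=\{S^k_t,A^k_t\setminus A^i_t\}$, and more importantly the information state $\Pi^k_1$, together with the extra accessible variables, refines to $\Pi^i_1$.

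First I would fix an agent $k$ and, for each $i\in\mathcal{K}$, rewrite the terminal cost so that it is minimized ``agent by agent'' over the prescriptions $\Gamma^{[k,i]}_1$. Using the disintegration of the conditional expectation, $\mathbb{E}^{\boldsymbol{\psi}^k}[\hat c^k_1\mid A^k_1]$ can be written as a sum (or integral) over the realizations of the accessible information of the various agents $i$; because the choices of the distinct prescription functions $\Gamma^{[k,i]}_1$ enter additively-separably into this expression once we condition on enough information, each $\Gamma^{[k,i]}_1$ can be optimized independently. For $i\in\mathcal{B}^k$, conditioning on $A^i_1$ (a sub-$\sigma$-field of $A^k_1$) and invoking Lemma~\ref{lem_s_relation} plus the definition of $\Pi^i_1$, the relevant conditional cost is a function of $\Pi^i_1$ only, so an optimal $\Gamma^{*[k,i]}_1$ exists that depends on $A^i_1$ only through $\Pi^i_1$; and since $i\in\mathcal{B}^k$ implies $A^i_1\subseteq A^k_1\subseteq A^j_1$-style inclusions fail in the wrong direction, I instead use \eqref{ainfo_prop_1} to note $A^K_1\subseteq\dots\subseteq A^i_1$, so $\Pi^i_1$ determines $\Pi^{i+1}_1,\dots,\Pi^K_1$ and the dependence can be written as $\psi^{*[k,i]}_1(\Pi^i_1,\dots,\Pi^K_1)$, matching the second branch of \eqref{two_stage_eq}. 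For $i\notin\mathcal{B}^k$ the prescription $\Gamma^{[k,i]}_1$ is generated from the full $A^k_1$, and by the same argument together with the fact that $\Pi^k_1$ refines $\Pi^{k+1}_1,\dots,\Pi^K_1$, an optimal choice exists depending only on $(\Pi^k_1,\dots,\Pi^K_1)$, giving the first branch.

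The main technical obstacle is establishing the ``separability'' that lets each $\Gamma^{[k,i]}_1$ be optimized independently and, simultaneously, showing that each such optimization only needs the tuple of information states rather than the raw accessible information. Concretely, one must verify that $\mathbb{E}^{\boldsymbol{\psi}^k}[\hat c^k_1(S^k_1,\Theta^k_1)\mid A^k_1,\Theta^k_{0:1}]$ decomposes so that the term depending on $\Gamma^{[k,i]}_1$ is a conditional expectation measurable with respect to $A^i_1$ (for $i\in\mathcal{B}^k$) or $A^k_1$ (for $i\notin\mathcal{B}^k$), and that this conditional expectation is a function of the corresponding information state(s). This is exactly the place where Lemma~\ref{lem_s_relation}, the POMDP reformulation via Lemma~\ref{state_suff_k}, and the properties \eqref{ainfo_prop_2}--\eqref{ainfo_prop_1} of the accessible information all get used; the argument is essentially the one-stage POMDP sufficiency argument applied simultaneously from the viewpoints of all agents $i\ge$ (and $<$) $k$. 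Once the separable representation is in hand, the existence of a minimizer in each piece that is a function of the information states is immediate because the prescription spaces $\mathscr{G}^{[k,i]}_1$ and state spaces are finite, and assembling these pieces yields the claimed form \eqref{two_stage_eq}.
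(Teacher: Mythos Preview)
Your proposal has a genuine gap: the ``additive separability'' claim is false. The terminal cost $c_1(X_1,U^1_1,\dots,U^K_1)$ is an arbitrary function of all actions jointly, so $\hat c^k_1(S^k_1,\Theta^k_1)$ couples all the prescriptions $\Gamma^{[k,1]}_1,\dots,\Gamma^{[k,K]}_1$ in a way that does not decompose once you condition on any $\sigma$-field. Consequently you cannot optimize each $\Gamma^{[k,i]}_1$ independently, and the argument that the piece depending on $\Gamma^{[k,i]}_1$ is measurable with respect to $A^i_1$ (and hence a function of $\Pi^i_1$ alone) never gets off the ground: for $i\in\mathcal{B}^k$, the conditional cost given $A^i_1$ still involves the prescriptions $\Gamma^{[k,j]}_1$ for $j<i$, and those are functions of $A^k_1\supsetneq A^i_1$, so they are \emph{not} $A^i_1$-measurable.

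The paper's proof avoids this entirely by running an induction over the agent index, from $K$ down to $1$. The base case $k=K$ is Lemma~\ref{lem_case_K}. For the inductive step, the positional relationship of Lemma~\ref{lem_psi_relation1} forces $\psi_1^{*[k,j]}=\psi_1^{*[k+1,j]}$ for every $j\in\mathcal{B}^{k+1}$, so those prescriptions already have the required form $\psi_1^{*[k,j]}(\Pi^j_1,\dots,\Pi^K_1)$ by the inductive hypothesis. Plugging these fixed optimal choices into $\tilde c_1^k(\pi_1^k,\theta_1^k)$ leaves a \emph{joint} minimization over $(\gamma_1^{[k,1]},\dots,\gamma_1^{[k,k]})$ whose objective is now an explicit function of $(\pi_1^k,\pi_1^{k+1},\dots,\pi_1^K)$; the argmin therefore depends only on $(\pi_1^k,\dots,\pi_1^K)$, which yields the first branch of \eqref{two_stage_eq}. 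The second branch comes directly from the inductive step, not from any separate optimization. In short, the structure for $i\in\mathcal{B}^k$ is \emph{inherited} via Lemmas~\ref{lem_psi_relation1}--\ref{lem_psi_relation2}, not derived by conditioning, and the remaining prescriptions are optimized together rather than one at a time.
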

\begin{proof}
See Appendix C.
\end{proof}

\begin{lemma} \label{three_stage_lemma}
\textbf{\emph{(Three-Stage Lemma)}} Consider an agent ${{k}}$ in a system with the time horizon $T=2$. Given that the optimal prescription strategy $\boldsymbol{\psi}_2^{{*k}}$ has the form
\begin{align} \label{three_stage_eq_3}
\Gamma_2^{*[{{k}},{{i}}]} =
\begin{cases}
{\psi}_2^{*[{{k}},{{i}}]}(\Pi^{{k}}_2,\dots,\Pi^{K}_2), \quad \text{if } {{i}} \not\in \mathcal{B}^{{k}}, \\
{\psi}_2^{*[{{k}},{{i}}]}(\Pi^{{i}}_2,\dots,\Pi^{K}_2), \quad \text{if } {{i}} \in \mathcal{B}^{{k}},
\end{cases}
\end{align}
if there exists an optimal prescription strategy $\boldsymbol{\psi}_1^{{*k}}$ that optimizes the performance criterion \eqref{per_cri_2} in Problem 2, it is of the form
\begin{align} \label{three_stage_eq}
\Gamma_1^{*[{{k}},{{i}}]} =
\begin{cases}
{\psi}_1^{*[{{k}},{{i}}]}(\Pi^{{k}}_1,\dots,\Pi^{K}_1), \quad \text{if } {{i}} \not\in \mathcal{B}^{{k}}, \\
{\psi}_1^{*[{{k}},{{i}}]}(\Pi^{{i}}_1,\dots,\Pi^{K}_1), \quad \text{if } {{i}} \in \mathcal{B}^{{k}}.
\end{cases}
\end{align}
\end{lemma}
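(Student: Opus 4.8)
The plan is to mimic the proof of the Two-Stage Lemma (Appendix C), but now with the backward induction step enlarged to account for the structure of the optimal prescription at the final stage $t=2$, which is given by hypothesis \eqref{three_stage_eq_3}. The key realization is that the system from the point of view of agent ${{k}}$ is a POMDP with state $S_t^{{k}}$, observation $Z_t^{{k}}$, control input $\Theta_t^{{k}}$, and cost $\hat{c}_t^{{k}}(S_t^{{k}},\Theta_t^{{k}})$; moreover, by Lemmas \ref{pi_k_1}--\ref{pi_k_3} the information state $\Pi_t^{{k}}$ is an information state for agent ${{k}}$'s own problem. The complication is that agent ${{k}}$ generates the prescriptions $\Gamma_t^{[{{k}},{{i}}]}$ for agents ${{i}} \in \mathcal{B}^{{k}}$ as functions of the \emph{coarser} accessible information $A_t^{{i}} \subseteq A_t^{{k}}$, so we cannot simply invoke a standard POMDP argument; we must show that the optimal such prescription depends on $A_t^{{i}}$ only through the collection $(\Pi_t^{{i}},\dots,\Pi_t^{K})$.

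First I would write the total cost for agent ${{k}}$ over $t=0,1,2$ and condition on the stage-$1$ realizations, writing the cost-to-go from time $1$ onward. Using the hypothesized form \eqref{three_stage_eq_3} together with Lemmas \ref{pi_k_1}--\ref{pi_k_3}, the stage-$2$ expected cost, conditioned on $A_1^{{k}}$ and $\Theta_{0:1}^{{k}}$, is expressible as a function of $\Pi_1^{{k}}$ and $\Theta_1^{{k}}$ (since $\Pi_2^{{k}}$ evolves as a controlled Markov chain driven by $\Theta_1^{{k}}$, and the optimal stage-$2$ prescription is a fixed function of $\Pi_2^{{k}},\dots,\Pi_2^{K}$). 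The stage-$1$ cost is $\tilde c_1^{{k}}(\Pi_1^{{k}},\Theta_1^{{k}})$ by Lemma \ref{pi_k_3}. Hence the cost-to-go from stage $1$, conditioned on agent ${{k}}$'s information, is a function $V_1^{{k}}(\Pi_1^{{k}},\dots,\Pi_1^{K},\Theta_1^{{k}})$ of the information states of agents ${{k}},\dots,K$ and the prescription $\Theta_1^{{k}}$; here I would need Lemma \ref{lem_s_relation} to track how the information states of agents ${{i}}\in\mathcal B^{{k}}$ relate to that of agent ${{k}}$, ensuring no extra dependence creeps in.

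Next, I would minimize this cost-to-go over $\Theta_1^{{k}} = (\Gamma_1^{[{{k}},1]},\dots,\Gamma_1^{[{{k}},K]})$. The objective splits: for each component ${{i}}$ the prescription $\Gamma_1^{[{{k}},{{i}}]}$ is by \eqref{eq_gen_pres} constrained to be a function of $A_1^{{k}}$ if ${{i}}\notin\mathcal B^{{k}}$ and of $A_1^{{i}}$ if ${{i}}\in\mathcal B^{{k}}$. For the first group the argument is the usual POMDP collapse: conditioned on $A_1^{{k}}$, the cost depends on $A_1^{{k}}$ only through $(\Pi_1^{{k}},\dots,\Pi_1^{K})$, so an optimal $\Gamma_1^{*[{{k}},{{i}}]}$ may be chosen as a function of those information states. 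For the second group, where only the coarser $A_1^{{i}}$ is available, I would condition additionally on $A_1^{{i}}$, take the conditional expectation of the cost-to-go, and argue that this conditional expectation depends on $A_1^{{i}}$ only through $(\Pi_1^{{i}},\dots,\Pi_1^{K})$ — this uses that $\Pi_1^{{i}}$ is a sufficient statistic and that, by Lemma \ref{lem_s_relation}, $S_1^{{i}} = \{S_1^{{k}}, A_1^{{k}}\setminus A_1^{{i}}\}$ so the relevant conditional laws are themselves determined by $(\Pi_1^{{i}},\dots,\Pi_1^{K})$.

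The main obstacle I anticipate is this second group: showing rigorously that when agent ${{k}}$ optimizes a prescription $\Gamma_1^{[{{k}},{{i}}]}$ that is only allowed to depend on $A_1^{{i}}$, the minimizer can be taken to depend on $A_1^{{i}}$ solely through $(\Pi_1^{{i}},\dots,\Pi_1^{K})$. This requires carefully interchanging the minimization over prescription \emph{laws} with the nested conditional expectations (tower property over $A_1^{{k}} \supseteq A_1^{{i}}$), and verifying that the resulting reduced cost is measurable with respect to the correct information states — essentially a "common-information-style" coordinator argument localized to agent ${{i}}$. Once that is established, collecting the optimizers for all components ${{i}}\in\mathcal K$ yields the claimed form \eqref{three_stage_eq}, completing the induction step; the full horizon-$T$ result then follows by the standard backward-induction extension, which I would defer to the subsequent theorem.
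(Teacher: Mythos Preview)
Your setup is sound and you correctly identify the crux: handling the prescriptions $\Gamma_1^{[k,i]}$ for $i\in\mathcal{B}^{k}$, which must be measurable with respect to the coarser $A_1^{i}$. But your proposed resolution of this obstacle --- ``condition additionally on $A_1^{i}$ and argue the conditional expectation depends on $A_1^{i}$ only through $(\Pi_1^{i},\dots,\Pi_1^{K})$'' --- is where the argument stalls. The cost-to-go $V_1^{k}$ depends on \emph{all} components of $\Theta_1^{k}$ simultaneously, and those components live at different information levels: $\Gamma_1^{[k,j]}$ is $A_1^{k}$-measurable for $j\le k$ but $A_1^{j}$-measurable for $j>k$. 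Conditioning on $A_1^{i}$ and taking the tower does not decouple these, because the optimal choice of $\Gamma_1^{[k,i]}$ still depends on what the finer-information prescriptions $\Gamma_1^{[k,j]}$, $j<i$, are doing, and those have not yet been shown to have any particular structure. Your ``coordinator argument localized to agent $i$'' is circular without something pinning down the other prescriptions first.

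The paper breaks this circularity by running an induction \emph{over agents}, from $K$ down to $1$, inside the proof of the lemma. The base case is agent $K$ (Lemma~\ref{lem_case_K}). At the inductive step one assumes the structural form already holds for agents $k+1,\dots,K$ at time $t=1$; then the positional relationships of Lemma~\ref{lem_psi_relation1} immediately give $\psi_1^{*[k,j]}=\psi_1^{*[k+1,j]}$ for every $j\in\mathcal{B}^{k+1}$, so those prescriptions are \emph{already fixed} in the desired form $\psi_1^{*[k,j]}(\Pi_1^{j},\dots,\Pi_1^{K})$. What remains is to optimize only the prescriptions $\Gamma_1^{[k,j]}$ for $j\in\{1,\dots,k\}$, and these are all functions of the \emph{same} $A_1^{k}$ --- exactly the easy case you handle. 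The cost-to-go is then shown (via Lemma~\ref{lem_s_relation} and the hypothesis \eqref{three_stage_eq_3}) to be a function of $(\pi_1^{k:K},\theta_1^{k})$, and the $\arg\min$ delivers the claimed dependence. The induction-over-agents device is the missing idea; once you add it, your Step-4 obstacle disappears because you never have to optimize a coarse-information prescription against unspecified fine-information ones.
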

\begin{proof}
See Appendix D.
\end{proof}

This leads us to the following structural result for the optimal prescription strategy $\boldsymbol{\psi}^{*{{k}}}$.

\begin{theorem} \label{struct_result}
Consider agent ${{k}} \in \mathcal{K}$. If there exists an optimal prescription strategy $\boldsymbol{\psi}^{*{{k}}}$, it is of the form
\begin{align}
\Gamma_t^{*[{{k}},{{i}}]}(\cdot) =
\begin{cases}
{\psi}_t^{*[{{k}},{{i}}]}(\Pi^{{k}}_t,\dots,\Pi_t^{K}), \quad \text{\emph{if }} {{i}} \not\in\mathcal{B}^{{k}}, \\
{\psi}_t^{*[{{k}},{{i}}]}(\Pi^{{i}}_t,\dots,\Pi_t^{K}), \quad \text{\emph{if }} {{i}} \in \mathcal{B}^{{k}},
\end{cases} \label{eq_struct_result}
\end{align}
that optimizes the performance criterion \eqref{per_cri_2} in Problem 2.
\end{theorem}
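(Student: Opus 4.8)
The plan is to prove Theorem~\ref{struct_result} by backward induction on $t$, using the Two-Stage Lemma and Three-Stage Lemma as the base and inductive-step templates. Having established in Lemmas~\ref{pi_k_1}--\ref{pi_k_3} that the information states $\Pi^{{k}}_t,\dots,\Pi^{{K}}_t$ of agent $k$ and of all agents beyond $k$ evolve as a (jointly) controlled Markov chain with input $\Theta^{{k}}_t$, and that the conditional expected cost at each time depends on the accessible information $A^{{k}}_t$ only through these information states, I would recast Problem~2 for agent $k$ as a Markov decision process whose state at time $t$ is the tuple $(\Pi^{{k}}_t,\dots,\Pi^{{K}}_t)$. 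The subtlety is that the prescriptions $\Gamma_t^{[{{k}},{{i}}]}$ for $i\in\mathcal{B}^{{k}}$ are functions only of $A^{{i}}_t\subseteq A^{{k}}_t$, not of all of $A^{{k}}_t$; by Lemmas~\ref{lem_s_relation}, \ref{pi_k_1}--\ref{pi_k_2} the pair $(\Pi^{{i}}_t,\dots,\Pi^{{K}}_t)$ is a function of $A^{{i}}_t$ and evolves Markovianly, which is exactly why the asserted structural form restricts the $i$-th prescription to depend on $(\Pi^{{i}}_t,\dots,\Pi^{{K}}_t)$.

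Concretely, I would set up the value functions
\begin{equation}
V_t^{{k}}(\pi^{{k}}_t,\dots,\pi^{{K}}_t) := \min_{\psi_t^{{k}}}\Big[\Tilde{c}_t^{{k}}(\pi_t^{{k}},\theta_t^{{k}}) + \mathbb{E}\big[V_{t+1}^{{k}}(\Pi^{{k}}_{t+1},\dots,\Pi^{{K}}_{t+1})\big]\Big],
\end{equation}
with $V_{T+1}^{{k}}\equiv 0$, and argue inductively that the per-stage minimization decomposes so that the optimal $\Gamma_t^{*[{{k}},{{i}}]}$ can be chosen to depend only on the information states indexed from $\max(i,k)$ onward. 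The base case $t=T$ is the Two-Stage Lemma (Appendix~C), which already delivers the claimed form at the final stage. For the inductive step, assuming the form holds at $t+1$, I would mirror the argument of the Three-Stage Lemma (Appendix~D): the cost-to-go from $t+1$ onward, once the optimal future strategy is substituted, becomes a function of $(\Pi^{{k}}_{t+1},\dots,\Pi^{{K}}_{t+1})$ alone, and by Lemma~\ref{pi_k_1} each $\Pi^{{j}}_{t+1}$ is a deterministic function of $(\Pi^{{j}}_t,\dots,\Pi^{{K}}_t)$, the prescription $\Theta^{{j}}_t$ restricted appropriately, and the new observation $Z^{{j}}_{t+1}$; integrating over the observation noise collapses the dependence onto the information states at time $t$, so the stage-$t$ problem is again an MDP in $(\Pi^{{k}}_t,\dots,\Pi^{{K}}_t)$ whose minimizer has the stated structure.

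The main obstacle is the bookkeeping of \emph{which} information states each sub-prescription is allowed to see, and verifying that the decomposition of the stage-$t$ minimization respects the nesting $A^{{K}}_t\subseteq\cdots\subseteq A^{{k}}_t$ (property~\eqref{ainfo_prop_1}). In particular, one must check that when agent $k$ optimizes $\Gamma_t^{[{{k}},{{i}}]}$ for $i\in\mathcal{B}^{{k}}$, the portion of the expected cost-to-go affected by that choice depends on $A^{{k}}_t$ only through $(\Pi^{{i}}_t,\dots,\Pi^{{K}}_t)$ --- this is where Lemma~\ref{lem_s_relation} (expressing $S^{{i}}_t$ in terms of $S^{{k}}_t$) and the consistency of information-state updates across agents, via the positional relationships $e^{[{{i}},{{k}}]}$ of Lemmas~\ref{lem_psi_relation1}--\ref{lem_psi_relation2}, are essential. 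Since the Two-Stage and Three-Stage Lemmas already perform exactly this verification for $T=1$ and the step $T=2\to$ stage $1$, the general induction is a routine extension: replace the terminal cost-to-go by the inductive $V_{t+1}^{{k}}$ and repeat the Three-Stage argument verbatim. I would therefore keep the proof short, citing Lemmas~\ref{two_stage_lemma} and \ref{three_stage_lemma} for the base case and inductive step and invoking Lemmas~\ref{pi_k_1}--\ref{pi_k_3} for the MDP structure that makes the backward recursion well-defined.
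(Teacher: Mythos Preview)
Your proposal is correct and takes essentially the same approach as the paper: the paper's proof of Theorem~\ref{struct_result} is a one-line appeal to Lemmas~\ref{two_stage_lemma} and~\ref{three_stage_lemma}, with the implicit backward induction on $t$ that you spell out. Your additional discussion of the MDP value functions and the role of Lemmas~\ref{pi_k_1}--\ref{pi_k_3}, \ref{lem_s_relation}, and the positional relationships simply makes explicit the machinery already packaged inside the proofs of those two lemmas (Appendices~C and~D).
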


\begin{proof}
\textcolor{black}{The proof follows from Lemmas \ref{two_stage_lemma} and \ref{three_stage_lemma}.}
\end{proof}

The structural result presented in Theorem 1 can also be used to derive the optimal control strategy  $\boldsymbol{g}^* \in G$ in Problem 1 using Lemmas \ref{lem_psi_g_relation} and \ref{lem_psi_g_relation_inv}.

\begin{theorem} \label{struct_result_prob_1}
Consider agent $k \in \mathcal{K}$ in Problem 1. If there exists an optimal control strategy $\boldsymbol{g}^*$, it is of the form
\begin{align}\label{opt_cont}
U_t^{{*k}} = g_t^{{*k}}(\Pi^{{k}}_t,\dots,\Pi_t^{{{K}}},L_t^{[{{k}},{{k}}]}).
\end{align}
\end{theorem}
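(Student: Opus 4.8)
The plan is to derive Theorem~\ref{struct_result_prob_1} as a direct corollary of Theorem~\ref{struct_result} together with the equivalence between the prescription formulation (Problem 2) and the original control problem (Problem 1). First I would invoke Lemma~\ref{lem_equivalence}: for the fixed agent $k$, Problem~2 is equivalent to Problem~1, and by Lemmas~\ref{lem_psi_g_relation} and~\ref{lem_psi_g_relation_inv} any optimal control strategy $\boldsymbol{g}^*$ of Problem~1 corresponds to an optimal prescription strategy $\boldsymbol{\psi}^{*k}$ of Problem~2, and conversely. In particular, if $\boldsymbol{g}^*$ is optimal for Problem~1, the induced $\boldsymbol{\psi}^{*k}$ is optimal for Problem~2, so Theorem~\ref{struct_result} applies and guarantees that $\boldsymbol{\psi}^{*k}$ can be taken of the structural form~\eqref{eq_struct_result}.

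Next I would reconstruct the control action of agent $k$ from its own prescription. By Lemma~\ref{lem_psi_g_relation_inv} (or equivalently~\eqref{eq_u_with_another_psi} specialized to ${{i}}={{k}}$, which falls under the case ${{i}}\in\mathcal{B}^{{k}}$), we have
\begin{equation}
U_t^{{k}} = \Gamma_t^{[{{k}},{{k}}]}\bigl(L_t^{[{{k}},{{k}}]}\bigr), \nonumber
\end{equation}
and by~\eqref{eq_gen_pres} the prescription $\Gamma_t^{[{{k}},{{k}}]}$ is generated as $\psi_t^{[{{k}},{{k}}]}(A_t^{{k}})$. Applying the structural form of Theorem~\ref{struct_result} to the diagonal term ${{i}}={{k}}\in\mathcal{B}^{{k}}$ gives $\Gamma_t^{*[{{k}},{{k}}]} = \psi_t^{*[{{k}},{{k}}]}(\Pi_t^{{k}},\dots,\Pi_t^{{K}})$. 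Composing this with the argument $L_t^{[{{k}},{{k}}]}$ yields
\begin{equation}
U_t^{*{{k}}} = \Gamma_t^{*[{{k}},{{k}}]}\bigl(L_t^{[{{k}},{{k}}]}\bigr) = \psi_t^{*[{{k}},{{k}}]}(\Pi_t^{{k}},\dots,\Pi_t^{{K}})\bigl(L_t^{[{{k}},{{k}}]}\bigr) =: g_t^{*{{k}}}(\Pi_t^{{k}},\dots,\Pi_t^{{K}},L_t^{[{{k}},{{k}}]}), \nonumber
\end{equation}
which is exactly~\eqref{opt_cont}. The definition of $g_t^{*{{k}}}$ is obtained by currying: it takes the information states and the inaccessible information, forms the prescription via $\psi_t^{*[{{k}},{{k}}]}$, and evaluates it.

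The only subtlety to address is that this $g_t^{*{{k}}}$ must indeed be a feasible control law in the sense of~\eqref{u_basic}, i.e., a function of the memory $M_t^{{k}}$ alone. This is immediate from the partition identity~\eqref{eq_partition} with ${{i}}={{k}}$, namely $M_t^{{k}} = \{L_t^{[{{k}},{{k}}]},A_t^{{k}}\}$, combined with the fact that each information state $\Pi_t^{{j}}$ for ${{j}}\in\mathcal{B}^{{k}}$ is a function of $A_t^{{j}} \subseteq A_t^{{k}} \subseteq M_t^{{k}}$ (using~\eqref{ainfo_prop_1} and the definition of $\Pi_t^{{j}}$, whose computation requires only $A_t^{{j}}$ and the prescription strategy, which is known). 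Hence $g_t^{*{{k}}}$ is a legitimate function of $M_t^{{k}}$, so $\boldsymbol{g}^* \in G$, and by the equivalence it is optimal for Problem~1. I do not anticipate a genuine obstacle here; the main point requiring care is bookkeeping the currying step and confirming measurability/feasibility of $g_t^{*{{k}}}$ with respect to $M_t^{{k}}$, which follows from the nesting $A_t^{{K}}\subseteq\dots\subseteq A_t^{{k}}\subseteq M_t^{{k}}$.
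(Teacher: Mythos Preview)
Your proposal is correct and follows the same overall logic as the paper: invoke the equivalence between Problems~1 and~2, apply the structural form of Theorem~\ref{struct_result} to the diagonal prescription $\Gamma_t^{*[k,k]}$, and uncurry to obtain $g_t^{*k}$. The only difference is that the paper routes through agent~$1$: it fixes the optimal prescription strategy $\boldsymbol{\psi}^{*1}$, uses the positional relationship (Lemma~\ref{lem_psi_relation1} with $\mathcal{B}^{1}=\mathcal{K}$) to conclude $\psi_t^{*[k,k]}=\psi_t^{*[1,k]}$, and then reads off the structural form of $\psi_t^{*[1,k]}$ from Theorem~\ref{struct_result} applied to agent~$1$ in the case $k\in\mathcal{B}^{1}$. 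Your argument instead applies Theorem~\ref{struct_result} directly to agent~$k$ with $i=k\in\mathcal{B}^{k}$, which is more self-contained; the paper's detour through agent~$1$ is not logically necessary here but ties into its broader point that a single computation for agent~$1$ suffices to assign strategies to all agents via the positional relationships. Your feasibility check that $g_t^{*k}$ depends only on $M_t^k$ is a worthwhile addition that the paper leaves implicit.
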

\begin{proof}
Let $\boldsymbol{\psi}^{*1}$ be the optimal prescription strategy for agent $1$. For any agent $k \in \mathcal{K}$, the optimal prescription strategy $\boldsymbol{\psi}^{{*k}}$ is given by \eqref{eq_e}. Every agent ${{k}} \in \mathcal{K}$ is beyond agent $1$, i.e., $ \mathcal{B}^{{1}} = \mathcal{K}$. Using \eqref{lem_3_condition}, this implies {that $\psi_t^{*[{{k}},{{k}}]}(\cdot) = \psi_t^{*[{{1}},{{k}}]}(\cdot).$}
From Lemma 1, we can select the optimal control strategy $\boldsymbol{g}^*=\Big(g_0^{{1}},\dots,g_0^{{K}},\dots,g_T^{{1}},\dots,g_T^{{K}}\Big)$ as {$U_t^{{k}} = \Gamma_t^{*[{{k}},{{k}}]}\Big(L_t^{[{{k}},{{k}}]}\Big) = \psi_t^{*[{{k}},{{k}}]}\Big(\Pi^{{k}}_t,\dots,\Pi_t^{{{K}}}\Big)\Big(L_t^{[{{k}},{{k}}]}\Big) = g_t^{{k}}\Big(\Pi^{{k}}_t,\dots,\Pi_t^{{{K}}},L_t^{[{{k}},{{k}}]}\Big).$}
\end{proof}

Note that, for all $k, i \in \mathcal{K}$ at time $t$, the structural results in Theorems 1 and 2, lead to prescription laws ${\psi}_t^{*[{{k}},{{i}}]}$ and control laws ${g}_t^k$ that have time-invariant domains, i.e., domains that stay the same size as time increases. This is in contrast to the control law $g_t^k$ of agent $k \in \mathcal{K}$ in Section-II, where the domain $\mathcal{M}_t^k$ grows in size with time.

\subsection{Illustrative Example}

\textcolor{black}{Consider the system with three agents in Section III-B.
We can define the information state for an agent ${{k}} \in \{1,2,3\}$ as follows,
\begin{align*}
    \Pi_t^{{1}} := \mathbb{P}^{\boldsymbol{\psi}^{{1}}}(&X^2_t,U^2_{t-1},X^3_{t-1:t},U^3_{t-2:t-1} \big|A^{{1}}_t, \Theta^{{1}}_{0:t-1}), \\
    \Pi_t^{{2}} := \mathbb{P}^{\boldsymbol{\psi}^{{2}}}(&X^1_t,U^1_{t-1},X^2_t,U^2_{t-1},X^3_{t-1:t},U^3_{t-2:t-1} \\
    &\big|A^{{2}}_t, \Theta^{{2}}_{0:t-1}), \\
    \Pi_t^{{3}} :=
    \mathbb{P}^{\boldsymbol{\psi}^{{3}}}(&X^1_t,U^1_{t-1},X^2_{t-1:t},U^2_{t-1:t-2},X^3_{t-1:t}, \\
    &U^3_{t-2:t-1}\big|A^{{3}}_t, \Theta^{{3}}_{0:t-1}),
\end{align*}
which can be simplified using \eqref{example_1} and the fact that the distribution of the disturbance $W_t$ is for every time step $t$ is known a priori. The simplified information states are given by
\begin{align*}
    \Pi_t^{{1}} := \mathbb{P}^{\boldsymbol{\psi}^{{1}}}(&U^2_{t-1},X^3_{t-1},U^3_{t-2:t-1} \big|A^{{1}}_t, \Theta^{{1}}_{0:t-1}), \\
    \Pi_t^{{2}} := \mathbb{P}^{\boldsymbol{\psi}^{{2}}}(&U^1_{t-1},U^2_{t-1},X^3_{t-1},U^3_{t-2:t-1} \\
    &\big|A^{{2}}_t, \Theta^{{2}}_{0:t-1}), \\
    \Pi_t^{{3}} :=
    \mathbb{P}^{\boldsymbol{\psi}^{{3}}}(&U^1_{t-1},X^2_{t-1},U^2_{t-1:t-2},X^3_{t-1},U^3_{t-2:t-1} \\
    &\big|A^{{3}}_t, \Theta^{{3}}_{0:t-1}).
\end{align*}}
Using Theorem 1, we can write the optimal prescriptions of the three agents as $\Gamma_t^{{*[1,1]}} = \psi^{{*[1,1]}}(\Pi_t^{{1}},\Pi_t^{{2}},\Pi_t^{{3}})$, $\Gamma_t^{{*[1,2]}} = \psi^{{[1,2]}}(\Pi_t^{{2}},\Pi_t^{{3}}) = \Gamma_t^{{*[2,2]}}$, and
$\Gamma_t^{{*[1,3]}} = \psi^{{[1,3]}}(\Pi_t^{{3}}) = \Gamma_t^{{*[3,3]}}$. This gives us the optimal control actions 
    $U_t^{*1} = \Gamma_t^{{*[1,1]}}$, $U_t^{*2} = \Gamma_t^{{*[2,2]}}(L_t^{{[2,2]}})$, and $U_t^{*3} = \Gamma_t^{{*[3,3]}}(L_t^{{[3,3]}})$.

\color{black}
\section{Simplification of Structural Results}
In this section, we present a simplification of the structural result for any agent $k \in \mathcal{K}$ presented in Theorem \ref{struct_result}. Consider the equivalent state $S_t^{{k}} \in \mathcal{S}_t^{{k}}$. For two agents ${{k}} \in \mathcal{K}$ and ${{i}} \in \mathcal{B}^{{k}}$, we denote the following set of random variables
\begin{gather}
    {S}_t^{[{{k}},{{i}}]} := S_t^{{i}} \setminus S_t^{{k}},
\end{gather}
that takes values in the finite collection of sets
    $\mathcal{S}_t^{[{{k}},{{i}}]} := \Big \{s_t^{{i}} \setminus s_t^{{{k}}} : s_t^{{i}} \in \mathcal{S}_t^{{{i}}}, s_t^{{k}} \in \mathcal{S}_t^{{{k}}} \Big\}.$
We also define a connection term that relates the information states $\Pi_t^{{k}}$ and $\Pi_t^{{i}}$ of agents ${{k}}$ and ${{i}}$.

\begin{definition}
Let $S_t^{{k}}$ be the equivalent state, $A_{t}^{{k}}$ the accessible information, and $\Theta^{{k}}_{0:t-1}$  the control inputs at time $t$ for an agent ${{k}} \in \mathcal{K}$. For two agents ${{k}}$ and ${{i}} \in \mathcal{B}^{{k}}$, the \textit{connection term} is a probability distribution $\Lambda^{[{{k}},{{i}}]}_t$ that takes values in the possible realizations $\mathscr{P}^{[{{k}},{{i}}]}_t := \Delta(\mathcal{S}_t^{[{{k}},{{i}}]})$, such that
\begin{multline}
\Lambda^{[{{k}},{{i}}]}_t(s^{{i}}_t \setminus s^{{k}}_t)
:= \mathbb{P}^{\boldsymbol{\psi}^{{i}}} \left({S}_t^{[{{k}},{{i}}]} = s^{{i}}_t \setminus s^{{k}}_t \; \big| \; A^{{i}}_t, \Theta^{{i}}_{0:t-1} \right). \label{eq_gamma_def}
\end{multline}
\end{definition}
Next, we show that the connection term $\Lambda^{[{{k}},{{i}}]}_t$ can be used to relate the information states $\Pi_t^{{k}}$ and $\Pi_t^{{i}}$.
\begin{lemma} \label{pi_relations}
    Let $\Pi_t^{{k}}$ be the information state of an agent ${{k}} \in \mathcal{K}$. The information state $\Pi_t^{{i}}$ of any agent ${{i}} \in \mathcal{B}^{{k}}$ satisfies the following property,
\begin{align}
    \Pi^{{i}}_t(s^{{i}}_t) = \Pi_t^{{k}}(s^{{k}}_t) \cdot \Lambda^{[{{k}},{{i}}]}_t(s^{{i}}_t \setminus s^{{k}}_t). \label{eq:pi_relation}
\end{align}
\end{lemma}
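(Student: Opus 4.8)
The plan is to prove \eqref{eq:pi_relation} by decomposing the equivalent state $S_t^{{i}}$ of agent ${{i}}$ into the equivalent state $S_t^{{k}}$ of agent ${{k}}$ and the complementary block ${S}_t^{[{{k}},{{i}}]}$, and then applying the chain rule of conditional probability. Throughout I work with prescription strategies $\boldsymbol{\psi}^{{k}}$ and $\boldsymbol{\psi}^{{i}} = e^{[{{i}},{{k}}]}(\boldsymbol{\psi}^{{k}})$ linked by the positional relationship of Lemma \ref{lem_psi_relation1}. A preliminary observation I will rely on is that, since \eqref{eq_u_with_another_psi} guarantees $\boldsymbol{\psi}^{{k}}$ and $\boldsymbol{\psi}^{{i}}$ generate identical control actions for every agent at every time, and the primitive random variables have fixed distributions, the measures $\mathbb{P}^{\boldsymbol{\psi}^{{k}}}$ and $\mathbb{P}^{\boldsymbol{\psi}^{{i}}}$ coincide on the joint law of $(X_{0:T}, Y^{1:K}_{0:T}, U^{1:K}_{0:T})$, hence on any event determined by these variables; in particular on events involving $S_t^{{k}}$, $A_t^{{k}}$, and $A_t^{{i}}$.

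By Lemma \ref{lem_s_relation}, $S_t^{{i}} = \{S_t^{{k}}, {S}_t^{[{{k}},{{i}}]}\}$ with ${S}_t^{[{{k}},{{i}}]} = S_t^{{i}} \setminus S_t^{{k}} = A_t^{{k}} \setminus A_t^{{i}}$, so each realization $s_t^{{i}}$ splits uniquely into $s_t^{{k}}$ on the $S_t^{{k}}$-coordinates and the complementary piece $s_t^{{i}} \setminus s_t^{{k}}$. Substituting this into the definition of $\Pi_t^{{i}}$ and using the chain rule yields
\begin{align}
\Pi^{{i}}_t(s^{{i}}_t)
&= \mathbb{P}^{\boldsymbol{\psi}^{{i}}}\big(S_t^{{k}} = s_t^{{k}},\, {S}_t^{[{{k}},{{i}}]} = s^{{i}}_t \setminus s^{{k}}_t \,\big|\, A^{{i}}_t, \Theta^{{i}}_{0:t-1}\big) \nonumber \\
&= \mathbb{P}^{\boldsymbol{\psi}^{{i}}}\big(S_t^{{k}} = s_t^{{k}} \,\big|\, {S}_t^{[{{k}},{{i}}]} = s^{{i}}_t \setminus s^{{k}}_t,\, A^{{i}}_t,\, \Theta^{{i}}_{0:t-1}\big) \cdot \Lambda^{[{{k}},{{i}}]}_t(s^{{i}}_t \setminus s^{{k}}_t), \nonumber
\end{align}
where the second factor is exactly the connection term by \eqref{eq_gamma_def}. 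It then remains to identify the first factor with $\Pi_t^{{k}}(s_t^{{k}})$.

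This last identification is the main obstacle, and it rests on showing that the conditioning information available to agent ${{k}}$ is recovered inside the first factor. Since ${{i}} \in \mathcal{B}^{{k}}$, property \eqref{ainfo_prop_1} gives $A_t^{{i}} \subseteq A_t^{{k}}$; combined with ${S}_t^{[{{k}},{{i}}]} = A_t^{{k}} \setminus A_t^{{i}}$, conditioning jointly on $A_t^{{i}}$ and ${S}_t^{[{{k}},{{i}}]}$ is equivalent to conditioning on $A_t^{{k}}$. Moreover, by \eqref{eq_gen_pres} (using $A_s^{{j}} \subseteq A_s^{{i}}$ for ${{j}} \in \mathcal{B}^{{i}}$) each $\Theta_s^{{i}}$ is a deterministic function of $A_s^{{i}}$ and $\boldsymbol{\psi}^{{i}}$, and by \eqref{ainfo_prop_2} and \eqref{ainfo_prop_1} we have $A^{{i}}_{s} \subseteq A^{{i}}_{t-1} \subseteq A_t^{{k}}$ for $s \le t-1$, so $\Theta^{{i}}_{0:t-1}$ is a function of $A_t^{{k}}$ and $\boldsymbol{\psi}^{{i}}$ and may be dropped from the conditioning; the same argument shows $\Theta^{{k}}_{0:t-1}$ is redundant given $A_t^{{k}}$ and $\boldsymbol{\psi}^{{k}}$. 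Hence the first factor equals $\mathbb{P}^{\boldsymbol{\psi}^{{i}}}(S_t^{{k}} = s_t^{{k}} \mid A_t^{{k}})$ while $\Pi_t^{{k}}(s_t^{{k}}) = \mathbb{P}^{\boldsymbol{\psi}^{{k}}}(S_t^{{k}} = s_t^{{k}} \mid A_t^{{k}})$, and these coincide by the agreement of the two measures noted above, which gives \eqref{eq:pi_relation}. All conditioning events are understood to have positive probability, as is implicit in the definition of the information states, and this causes no difficulty.
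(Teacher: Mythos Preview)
Your proof is correct and follows essentially the same approach as the paper's own proof: decompose $S_t^{{i}}$ via Lemma \ref{lem_s_relation}, apply the chain rule so that the second factor is the connection term, then recognize that conditioning on $\{A_t^{{i}}, S_t^{[{{k}},{{i}}]}\}$ amounts to conditioning on $A_t^{{k}}$, drop and reinsert the $\Theta$-histories as functions of the accessible information and the (related) prescription strategies, and switch between $\mathbb{P}^{\boldsymbol{\psi}^{{i}}}$ and $\mathbb{P}^{\boldsymbol{\psi}^{{k}}}$ via the positional relationships. Your justification of the measure agreement through \eqref{eq_u_with_another_psi} is slightly more explicit than the paper's, but the argument is the same.
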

\begin{proof}
Let $a_{t}^{{k}}$, $\theta_t^{{k}}$, and $\pi^{{k}}_t$ be the realizations of the random variables $A_{t}^{{k}}$, $\Theta_t^{{k}},$ and the conditional probability distribution $\Pi^{{k}}_t$, respectively for an agent ${{k}} \in \mathcal{K}$. Let $\boldsymbol{\psi}^{{k}}$ be the prescription strategy of agent ${{k}}$. For an agent ${{i}} \in \mathcal{B}^{{k}}$, the information state at time $t$ is $\pi^{{i}}_t(s^{{i}}_t) = \mathbb{P}^{\boldsymbol{\psi}^{{i}}} \Big(S^{{i}}_t = s^{{i}}_t \; \big| \; a^{{i}}_t, \theta^{{i}}_{0:t-1} \Big).$
Since $S_t^i = S_t^k \cup S_t^{[{{k}},{{i}}]}$ and \eqref{eq_s_relation} implies that ${s}_t^{[{{k}},{{i}}]} = s_t^{{i}} \setminus s_t^{{k}} = a_t^{{k}} \setminus a_t^{{i}},$ we have
\begin{align}
    \pi^{{i}}_t(s^{{i}}_t) = \mathbb{P}^{\boldsymbol{\psi}^{{i}}} \Big(&s^{{k}}_t, s_t^{[{{k}},{{i}}]} ~\big|~ a^{{i}}_t, \theta^{{i}}_{0:t-1} \Big) \nonumber \\
    = \mathbb{P}^{\boldsymbol{\psi}^{{i}}} \Big(&S^{{k}}_t = s^{{k}}_t ~\big|~ a_t^{{k}} \setminus a_t^{{i}}, a_t^{{i}}, \theta^{{i}}_{0:t-1} \Big) \nonumber \\
    \cdot \mathbb{P}^{\boldsymbol{\psi}^{{i}}} \Big(&S_t^{[{{k}},{{i}}]} = s_t^{{i}} \setminus s_t^{{k}} ~\big|~ a^{{i}}_t, \theta^{{i}}_{0:t-1} \Big). \label{eq:first}
\end{align}
We can drop $\theta^{{i}}_{0:t-1}$ from the conditioning in the first term in \eqref{eq:first}, because it is a function of $\boldsymbol{\psi}^{{i}}$ and $a_t^{{i}}$. This implies that 
\begin{align*}
    \mathbb{P}^{\boldsymbol{\psi}^{{i}}} \Big(S^{{k}}_t = s^{{k}}_t
    ~\big|~ a_t^{{k}} \setminus a_t^{{i}}, a_t^{{i}}, \theta^{{i}}_{0:t-1} \Big) 
    = \mathbb{P}^{\boldsymbol{\psi}^{{i}}} \Big(S^{{k}}_t = s^{{k}}_t ~\big|~ a_t^{{k}} \Big). 
\end{align*}
Lemmas 3 and 4 establish the existence of a pair of positional relationship functions $e^{[{{i}},{{k}}]}(\cdot)$ and $e^{[{{k}},{{i}}]}(\cdot)$, known a priori, such that
    $\boldsymbol{\psi}^{{k}} = e^{[{{k}},{{i}}]}\big(\boldsymbol{\psi}^{{i}}\big)$ and $\boldsymbol{\psi}^{{i}} = e^{[{{i}},{{k}}]}\big(\boldsymbol{\psi}^{{k}}\big).$
This implies that
\begin{gather}
    \mathbb{P}^{\boldsymbol{\psi}^{{i}}} \Big(S_t^{{k}} = s^{{k}}_t ~\big|~ a_t^{{k}} \Big) = \mathbb{P}^{\boldsymbol{\psi}^{{k}}} \Big(S_t^{{k}} = s^{{k}}_t ~\big|~ a_t^{{k}} \Big). \label{eq:fourth}
\end{gather}
We can add the term $\theta^{{k}}_{0:t-1}$ to the conditioning of \eqref{eq:fourth} because it is simply a function of $\boldsymbol{\psi}^{{k}}$ and $a_t^{{k}}$, and thus
\begin{align}
    \mathbb{P}^{\boldsymbol{\psi}^{{k}}} \Big(S_t^{{k}} = s^{{k}}_t ~\big|~ a_t^{{k}} \Big) &=  \mathbb{P}^{\boldsymbol{\psi}^{{k}}} \Big(S_t^{{k}} = s^{{k}}_t ~\big|~ a_t^{{k}}, \theta^{{k}}_{0:t-1} \Big) \nonumber \\
    &= \pi_t^{{k}}(s_t^{{k}}). \label{eq:fifth}
\end{align}
We now substitute \eqref{eq:fifth} in \eqref{eq:first} and note that $s_t^{{i}} \setminus s_t^{{k}} = a_t^{{k}} \setminus a_t^{{i}}$
from \eqref{eq_s_relation}. This implies that
\begin{gather}
    \pi^{{i}}_t(s^{{i}}_t) = \pi_t^{{k}}(s^{{k}}_t) \cdot \mathbb{P}^{\boldsymbol{\psi}^{{i}}} \Big(S_t^{{[k,i]}} = s_t^{{i}} \setminus s_t^{{k}} ~\big|~ a^{{i}}_t, \theta^{{i}}_{0:t-1} \Big),
\end{gather}
where the second term is the realization $\lambda_t^{[{{k}},{{i}}]}(s^{{i}}_t \setminus s^{{k}}_t)$ of the connection term $\Lambda_t^{[{{k}},{{i}}]}(s^{{i}}_t \setminus s^{{k}}_t)$, and the proof is complete.
\end{proof}

Lemma \ref{pi_relations} allows us to simplify the structural result presented in Theorem \ref{struct_result}.
\begin{theorem} \label{struct_result_small}
Consider an agent ${{k}} \in \mathcal{K}$. If there exists an optimal prescription strategy $\boldsymbol{\psi}^{*{{k}}}$ that optimizes the performance criterion \eqref{per_cri_2} in Problem 2, it is of the form
\begin{align}
\Gamma_t^{*[{{k}},{{i}}]}
=
\begin{aligned}
\begin{cases}
{\psi}_t^{*[{{k}},{{i}}]}\Big(\Pi^{{k}}_t, \Lambda_t^{[{{k}},{{k+1}}]},
\dots,\Lambda_t^{[{{K-1}},{{K}}]} \Big), &\text{\emph{ if }} {{i}} \not\in\mathcal{B}^{{k}}, \\
{\psi}_t^{*[{{k}},{{i}}]} \Big(\Pi^{{i}}_t, \Lambda_t^{[{{i}},{{i+1}}]},
\dots,\Lambda_t^{[{{K-1}},{{K}}]} \Big), &\text{\emph{ if }} {{i}} \in \mathcal{B}^{{k}}.
\end{cases} \label{eq_small_struct}
\end{aligned}
\end{align}
\end{theorem}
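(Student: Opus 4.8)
The plan is to reduce Theorem \ref{struct_result_small} to Theorem \ref{struct_result} by showing that, at each time $t$, the entire tuple of information states appearing in the latter can be rebuilt from a single information state together with the chain of \emph{consecutive} connection terms. Concretely, I will prove that for every $m$ with $k\le m\le K$ the information state $\Pi_t^{m}$ is a deterministic, strategy‑independent function of $\big(\Pi_t^{k},\Lambda_t^{[{{k}},{{k+1}}]},\dots,\Lambda_t^{[{{m-1}},{{m}}]}\big)$; composing this map with the representation of Theorem \ref{struct_result} then yields \eqref{eq_small_struct} directly.

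First I would establish the one‑step reconstruction. Fix $j$ with $k\le j<K$. Since $j+1\in\mathcal{B}^{j}$, Lemma \ref{lem_s_relation} gives $S_t^{{j+1}}=\{S_t^{j},A_t^{j}\setminus A_t^{{j+1}}\}$, so each realization $s_t^{{j+1}}$ decomposes uniquely as a pair consisting of its restriction $s_t^{j}\in\mathcal{S}_t^{j}$ and the complement $s_t^{{j+1}}\setminus s_t^{j}\in\mathcal{S}_t^{[{{j}},{{j+1}}]}$. Applying Lemma \ref{pi_relations} to the pair $(j,j+1)$ then gives
\begin{align}
\Pi_t^{{j+1}}(s_t^{{j+1}}) = \Pi_t^{j}(s_t^{j})\cdot\Lambda_t^{[{{j}},{{j+1}}]}\big(s_t^{{j+1}}\setminus s_t^{j}\big), \label{eq:prop_onestep}
\end{align}
which exhibits $\Pi_t^{{j+1}}$ as a fixed function of the pair $\big(\Pi_t^{j},\Lambda_t^{[{{j}},{{j+1}}]}\big)$; note that \eqref{eq:prop_onestep} involves only a product and no division, so there is no issue at realizations of zero probability.

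Next I would iterate \eqref{eq:prop_onestep}. Taking $m=k$ as the trivial base case and moving up one index at a time, \eqref{eq:prop_onestep} shows by induction that $\Pi_t^{m}$ is a fixed function of $\big(\Pi_t^{k},\Lambda_t^{[{{k}},{{k+1}}]},\dots,\Lambda_t^{[{{m-1}},{{m}}]}\big)$, and hence of $\big(\Pi_t^{k},\Lambda_t^{[{{k}},{{k+1}}]},\dots,\Lambda_t^{[{{K-1}},{{K}}]}\big)$; here the nested structure $A_t^{K}\subseteq\cdots\subseteq A_t^{k}$ from \eqref{ainfo_prop_1} is what makes the consecutive connection terms the natural objects to chain. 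Consequently the whole tuple $(\Pi_t^{k},\dots,\Pi_t^{K})$ is a fixed map $\phi_t^{k}$ of $\big(\Pi_t^{k},\Lambda_t^{[{{k}},{{k+1}}]},\dots,\Lambda_t^{[{{K-1}},{{K}}]}\big)$, and running the identical induction from index $i$ shows $(\Pi_t^{i},\dots,\Pi_t^{K})$ is a fixed map $\phi_t^{i}$ of $\big(\Pi_t^{i},\Lambda_t^{[{{i}},{{i+1}}]},\dots,\Lambda_t^{[{{K-1}},{{K}}]}\big)$ for every $i\in\mathcal{B}^{k}$.

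Finally, I would invoke Theorem \ref{struct_result}: if an optimal prescription strategy $\boldsymbol{\psi}^{*{{k}}}$ exists, then $\Gamma_t^{*[{{k}},{{i}}]}={\psi}_t^{*[{{k}},{{i}}]}(\Pi_t^{k},\dots,\Pi_t^{K})$ when ${{i}}\notin\mathcal{B}^{{k}}$ and $\Gamma_t^{*[{{k}},{{i}}]}={\psi}_t^{*[{{k}},{{i}}]}(\Pi_t^{i},\dots,\Pi_t^{K})$ when ${{i}}\in\mathcal{B}^{{k}}$. Substituting $(\Pi_t^{k},\dots,\Pi_t^{K})=\phi_t^{k}(\cdot)$ in the first case and $(\Pi_t^{i},\dots,\Pi_t^{K})=\phi_t^{i}(\cdot)$ in the second, and absorbing $\phi_t^{k}$ and $\phi_t^{i}$ into the prescription laws, produces exactly \eqref{eq_small_struct}; since this amounts to composing with a known fixed map, the prescribed control actions, and hence the performance criterion \eqref{per_cri_2}, are unchanged, so optimality is preserved. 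The one step that requires care — and is the \textbf{main obstacle} — is the chaining argument: showing that the $K-k$ consecutive connection terms, rather than all pairwise ones, are enough to recover every $\Pi_t^{m}$, which is precisely what the inductive application of \eqref{eq:prop_onestep} delivers.
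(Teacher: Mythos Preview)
Your proposal is correct and follows essentially the same approach as the paper: apply Lemma~\ref{pi_relations} to consecutive pairs $(j,j+1)$ to obtain $\Pi_t^{j+1}$ from $(\Pi_t^{j},\Lambda_t^{[j,j+1]})$, iterate this relation along the chain $k,k+1,\dots,K$, and then substitute into the structural form of Theorem~\ref{struct_result}. Your write-up is more explicit about the inductive chaining and the preservation of optimality under composition with a fixed map, but the underlying argument is identical to the paper's.
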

\begin{proof}
The proof follows through a straightforward application of Lemma \ref{pi_relations} to the structural results in Theorem \ref{struct_result}. Consider agent ${{k}} \in \mathcal{K}$. For every agent ${{i}} \in \mathcal{B}^{{k}}$, we use \eqref{eq:pi_relation} to write that $\Pi_t^{{i+1}} = \Pi_t^{{{i}}} \cdot \Lambda_t^{{[{{i}},{{i+1}}]}}$. Now, starting with the information state $\Pi_t^{{K}} = \Pi_t^{{K-1}} \cdot \Lambda_t^{{[{{K-1}},{{K}}]}}$, we iteratively apply these relations in \eqref{eq_struct_result} for the agents $K, K-1, \dots, k$ to complete the proof.
\end{proof}

Note that the structural result in Theorem \ref{struct_result_small} is simpler than that in Theorem \ref{struct_result}. This is because in the argument of the optimal prescription $\Gamma_t^{*[{{k}},{{i}}]}$, the term $\Lambda_t^{[{{k}},{{i}}]}$ is smaller in size than $\Pi_t^{{i}}$ for all $i \in \mathcal{B}^{{k}}$. Thus, the argument of the optimal prescription law $\psi_t^{*[{{k}},{{i}}]}$ lies in a smaller space in \eqref{eq_small_struct} as compared to \eqref{eq_struct_result}.
\subsection{Illustrative Example}
Consider the system with three agents in Sections III-B and IV-C. At time $t$, the connection term of agent $2$ with respect to agent $1$ is given by
$
    \Lambda_t^{[{{1}},{{2}}]} = \mathbb{P}^{\boldsymbol{\psi}^{{2}}}(U^{{1}}_{t-1} | A^{{2}}_t, \Theta^{{2}}_{0: t-1}).
$
Similarly, the connection term of agent $3$ with respect to $2$ is given by
$
    \Lambda_t^{[{{2}},{{3}}]} = \mathbb{P}^{\boldsymbol{\psi}^{{3}}}(X^{{2}}_{t-1},U^{{2}}_{t-2} | A^{{3}}_t, \Theta^{{3}}_{0: t-1}).
$
Using Theorem \ref{struct_result_small}, the optimal prescriptions of the three agents are given by
\begin{align*}
    \Gamma_t^{{*[1,1]}} &= \psi^{{*[1,1]}}(\Pi_t^{{1}},\Lambda_t^{[{{1}},{{2}}]},\Lambda_t^{[{{2}},{{3}}]}), \\
    \Gamma_t^{{*[1,2]}} = \Gamma_t^{{*[2,2]}} &= \psi^{{[1,2]}}(\Pi_t^{{2}},\Lambda_t^{[{{2}},{{3}}]}) , \\
    \Gamma_t^{{*[1,3]}} = \Gamma_t^{{*[3,3]}} &= \psi^{{[1,3]}}(\Pi_t^{{3}}) .
\end{align*}
Note that the optimal prescription laws now have significantly smaller sized domains in comparison to the prescription laws in Section IV-C.
\subsection{Comparison with the Common Information Approach}
We believe that the structural result derived through the prescription approach has the advantage of compressing more information when compared with the common information approach. First, we note that the accessible information $A_t^{{K}}$ of agent $K$ is the common information among all agents in the system. Thus, the equivalent state $S_t^{{K}}$ and optimal complete prescription $\Theta_t^{{*K}}$ derived using \eqref{eq_common_info} are the same as those in the common information approach for all agents in $\mathcal{K}$.
The common information approach implies the existence of an optimal prescription strategy $\boldsymbol{\psi}^{*K}$ of the form
    $\Gamma_t^{*[{{K}},{{k}}]} = \psi_t^{*[{{K}},{{k}}]}(\Pi_t^{{K}}),$
that optimizes the performance criterion \eqref{per_cri_2} of Problem 2 for agent $K$.

Now, for two agents ${{k}} \in \mathcal{K}$ and ${{i}} \in \mathcal{B}^{{k}}$, consider the equivalent state $S_t^{{k}}$ that takes values in the finite set $\mathcal{S}_t^{{k}}$, and the random variable ${S}_t^{[{{i}},{{k}}]} = S_t^{{i}} \setminus S_t^{{k}}$ that takes values in the finite set $\mathcal{S}_t^{[{{i}},{{k}}]}$. By definition,
\begin{gather}
    S_t^{{k}} \cup S_t^{[{{k}},{{k+1}}]} \cup \dots \cup S_t^{[{{K-1}},{{K}}]} = S_t^{{K}}.
\end{gather}
Furthermore, we know from their definitions that the sets of random variables $\Big\{S_t^{{k}},S_t^{[{{k}},{{k+1}}]}, \dots, S_t^{[{{K-1}},{{K}}]} \Big\}$ are pairwise disjoint, i.e., the intersection of any two sets is $\emptyset$. As an example, consider $S_t^{[{{K-1}},{{K}}]}$ and $S_t^{{k}}$. We know from \eqref{eq_s_relation} that $S_t^{{k}} \subseteq S_t^{{K-1}},$
and thus, $S_t^{[{{K-1}},{{K}}]} \cap S_t^{{k}} \nonumber = \Big(S_t^{{K}} \setminus S_t^{{K-1}} \Big) \cap S_t^{{k}} \subseteq  \Big(S_t^{{K}} \setminus S_t^{{k}} \Big) \cap S_t^{{k}} = \emptyset.$
Thus, the cardinalities of these sets of random variables are related as follows,
\begin{gather}
    \big|S_t^{{k}}\big| + \big|S_t^{[{{k}},{{k+1}}]}\big| + \dots + \big|S_t^{[{{K-1}},{{K}}]}\big| = \big|S_t^{{K}}\big|,
\end{gather}
The argument of the optimal prescription law ${\psi}_t^{*[{{k}},{{i}}]}(\cdot)$ in the common information approach is the distribution $\mathbb{P}^{\boldsymbol{\psi}^k}(S_t^{{k}},S_t^{[{{k}},{{k+1}}]}, \dots, S_t^{[{{K-1}},{{K}}]}|A_t^K,\Theta_{0:t-1}^{K})$ that takes values in the space $\Delta(\mathcal{S}_t^{{K}})$. In contrast, the argument of the optimal prescription law ${\psi}_t^{*[{{k}},{{i}}]}(\cdot)$ in \eqref{eq_small_struct} is
\begin{gather*}
    \Big(\mathbb{P}^{\boldsymbol{\psi}^k}(S^{{k}}_t | A_t^k, \Theta_t^k), \mathbb{P}^{\boldsymbol{\psi}^{k+1}}(S^{[{{k}},{{k+1}}]}_t | A_t^{k+1}, \Theta_t^{k+1}), \\
    \dots,\mathbb{P}^{\boldsymbol{\psi}^{K}}(S^{[{{K-1}},{{K}}]}_t | A_t^{K}, \Theta_t^{K})\Big),
\end{gather*}
which takes values in the space $\Delta(\mathcal{S}_t^{{k}}) \times \Delta(\mathcal{S}_t^{[{{k}},{{k+1}}]}) \times \dots \times \Delta(\mathcal{S}_t^{[{{K-1}},{{K}}]})$. The spaces in which the arguments of both prescriptions take their values are of the same dimension.

\color{black}

Meanwhile, consider the optimal control action $U_t^{{*k}}$ for agent ${{k}}$ generated through the common information approach as
    $U_t^{{*k}} = \Gamma_t^{*[{{K}},{{k}}]}(L_t^{[{{k}},{{K}}]}).$
The same optimal control action $U_t^{{*k}}$, when generated through the prescription approach, is given by,
$U_t^{{*k}} = \Gamma_t^{*[{{k}},{{k}}]}(L_t^{[{{k}},{{k}}]}).$
From the definition of inaccessible information, we note that
$
    M_t^{{k}} = \{A_t^{{k}},L_t^{[{{k}},{{k}}]}\} = \{A_t^{{K}},L_t^{[{{k}},{{K}}]}\}$,
and from \eqref{ainfo_prop_1}, we know that $
    A_t^{{K}} \subseteq A_t^{{k}}$. Thus,
\begin{gather}
    L_t^{[{{k}},{{k}}]} \subseteq L_t^{[{{k}},{{K}}]}.
\end{gather}
Thus, our prescription functions have a smaller sized domain than those generated through the common information approach. \textcolor{black}{This implies that the prescription approach leads to a significantly smaller space of optimal prescription strategies as compared to the common information approach.}

\textcolor{black}{Specifically, agent $K$ has the largest domain for her prescriptions, and subsequently, requires the most computational effort to compute the optimal prescription strategy $\boldsymbol{\psi}^{*K}$. The performance of agent $K$ is the same as the common information approach. In contrast, agent $1$ has the smallest domain for her prescriptions and thus, requires the least computational effort to compute the optimal prescription strategy $\boldsymbol{\psi}^{*1}$. For the rest of the agents, the computational effort for optimal strategies increases with the index of the agent.}

\section{Discussion and Concluding Remarks}

In this paper, we analyzed a decentralized stochastic control problem with a word-of-mouth information structure and established structural results for optimal control laws with time-invariant domain sizes. We presented these structural results by formulating an alternative problem through the prescription approach. We showed that the result of the common information approach is a special case of the prescription approach, and that it is possible to compress larger amounts of data in asymmetric information sharing structures like the word-of-mouth information structure. \textcolor{black}{Furthermore, we showed that our result leads to a smaller search space for optimal strategies in comparison to the state of the art.}

\color{black}

\subsection{Aspects of Computation and Implementation}

We note in Section V-B that agent $1$ requires the least number of computations to derive her optimal prescription strategy $\boldsymbol{\psi}^{{*1}}$. This is because, for every agent $k \in \mathcal{K}$, agent $1$ uses the entire accessible information $A_t^k$ to generate the prescription $\Gamma_t^{[{{1}},{{k}}]}$, and thus compresses the maximum information from the memory of agent $k$. We have also noted in Remark 5 that the optimal control action corresponding to the optimal prescription of any agent remains the same, irrespective of the index assigned to that agent. However, despite achieving the same optimal control actions and optimal cost for all indexing patterns, the computational performance of the prescription approach changes with the indexing pattern. 

As an illustration, consider the static system in Section I-C. We re-index the agents in this system such that the information available to agents $1$, $2$, and $3$ is $M^1 = \{Y^3\}$, $M^2 = \{Y^2, Y^3\}$, and $M^3 = \{Y^1, Y^2, Y^3\}$, respectively. Witsenhausen's result on ordering agents \cite{witsenhausen1971information} implies that the optimal cost achieved by this re-indexed system is equal to the optimal cost achieved by the system in Section I-C. However, in the new system, the accessible information of all three agents is given by $A^1 = A^2 = A^3 = \{Y^3\}$. From the analysis in Section I-C, the number of computations required to derive the optimal prescriptions is $256$ for every agent $k \in \{1,2,3\}$. This is the same as the number of computations in the common information approach, and equivalent to the worst case performance of the prescription approach.

We believe that this challenge can be overcome using the following indexing pattern:
(1) The agent $1$ is selected such that,
$
    |M_t^1| \geq |M_t^k|$, for all $k \in \mathcal{K},
$
at any time $t$.
(2) Subsequently, the agent $k \in \mathcal{K} \setminus \{1\}$ is indexed such that,
\begin{align*}
        \Big|\Big(\bigcap_{{{j}}=1}^{k-1}M_t^{{{j}}}\Big) \cap M_t^{{{k}}}\Big| \geq \Big|\Big(\bigcap_{{{j}}=1}^{k-1}M_t^{{{j}}}\Big) \cap M_t^{{{i}}}\Big|,
        \forall {{i}} \in \mathcal{B}^{{{k}}},
\end{align*}
at any time $t$. For example, the memory of agent $2$ satisfies the property
$
    |M_t^{{{1}}} \cap M_t^{{{2}}}| \geq |M_t^{{{1}}} \cap M_t^{{{i}}}|$ for all ${{i}} \in \{2,\dots,K\}$. 

In this indexing pattern, (1) indicates that agent $1$ has the most information in the system and (2) ensures that any agent $k \in \mathcal{K} \setminus \{1\}$ has the largest accessible information $A_t^k$ among the remaining agents. However, we believe that there may be more efficient indexing patterns for agents that maximize the accessible information of all agents, and subsequently, maximize the computational efficiency.

\color{black}

\subsection{Ongoing and Future Research Directions}

We observed that the prescription approach does not depend on the information structure of the system beyond the definition of the memory of agents. This indicates that it should be possible to translate these results to different information structures without being constrained by the assumptions made for word-of-mouth communication. \textcolor{black}{Ongoing work includes exploring the application of the prescription approach to systems with no common information across all agents.}

\textcolor{black}{Finally, as we indicated in Section VI-A, the computational performance of the prescription approach depends on the indexing pattern of the agents. An interesting research direction for future research is the derivation of optimal indexing patterns in terms of computational performance. We also plan to explore the possibility of dynamically re-indexing agents in systems with dynamic communication networks.}

\section*{Appendix A\\Proof of Lemma \ref{pi_k_1}}

Let $a_{t+1}^{{k}}$, $\theta_t^{{k}},$ and $\pi^{{k}}_t$ be the realizations of the random variables $A_{t+1}^{{k}}$, $\Theta_t^{{k}},$ and the conditional probability distribution $\Pi^{{k}}_t$, respectively. Let $\boldsymbol{\psi}^{{k}}$ be the prescription strategy. Then the realization of $\Pi^{{k}}_{t+1}$ is
\begin{equation}
    \pi^{{k}}_{t+1}(s_{t+1}) = \mathbb{P}^{\boldsymbol{\psi}^{{k}}}(S^{{k}}_{t+1} = s^{{k}}_{t+1}|a^{{k}}_{t+1},\theta_{0:t}^{{k}}). \label{ap_a_1}
\end{equation}
Since $a_{t+1}^k = a_t^k \cup z_t^k$, the state $S^{{k}}_t$ in \eqref{ap_a_1} can be written as
\begin{gather}
    \pi^{{k}}_{t+1}(s_{t+1})= \sum_{s^{{k}}_t,v_{t+1}^{{1:K}},w_t} \mathbb{I}_{s^{{k}}_{t+1}}(\Hat{f}^{{k}}_{t+1}(s^{{k}}_t, w_t, v_{t+1}^{{1:K}}, \theta_t^{{k}})) \nonumber \\
    \cdot\mathbb{P}(V_{t+1}^{{1:K}}=v_{t+1}^{{1:K}}) \cdot\mathbb{P}(W_t=w_t)
    \cdot\mathbb{P}^{\boldsymbol{\psi}^{{k}}}(S^{{k}}_t=s^{{k}}_t|a^{{k}}_{t+1},\theta_{0:t}^{{k}}). \label{ap_a_2}
\end{gather}
The last term in \eqref{ap_a_2} can be written as
\begin{align} 
    &\mathbb{P}^{\boldsymbol{\psi}^{{k}}}(S^{{k}}_t=s^{{k}}_t|a^{{k}}_{t+1},\theta_{0:t}^{{k}}) \nonumber \\
        & = \frac{\left[\splitfrac{\mathbb{P}^{\boldsymbol{\psi}^{{k}}}(Z^{{k}}_{t+1}=z^{{k}}_{t+1}|s^{{k}}_t,a^{{k}}_{t},\theta_{0:t}^{{k}})}
        {\cdot\mathbb{P}^{\boldsymbol{\psi}^{{k}}}(s^{{k}}_t, a^{{k}}_{t},\theta_{0:t}^{{k}})}\right]}
        {\sum_{\tilde{s}^k_t}\left[{\splitfrac{\mathbb{P}^{\boldsymbol{\psi}^{{k}}}(Z^{{k}}_{t+1}=z^{{k}}_{t+1}|\tilde{s}^k_t,a^{{k}}_{t},\theta_{0:t}^{{k}})}
        {\cdot\mathbb{P}^{\boldsymbol{\psi}^{{k}}}(\tilde{s}^k_t, a^{{k}}_{t}, \theta_{0:t}^{{k}})}}\right]}. \label{ap_a_3}
\end{align}
However,
\begin{align}
    &\mathbb{P}^{\boldsymbol{\psi}^{{k}}}(s^{{k}}_t, a^{{k}}_{t}, \theta_{0:t}^{{k}}) \nonumber \\
    = & \mathbb{P}^{\boldsymbol{\psi}^{{k}}}(S^{{k}}_t = s^{{k}}_t|a^{{k}}_{t}, \theta_{0:t-1}^{{k}}) \cdot \mathbb{P}^{\boldsymbol{\psi}^{{k}}}(a^{{k}}_{t},\theta_{0:t}^{{k}}). \label{ap_a_4}
\end{align}
We can drop $\theta^{{k}}_t$ from the conditioning in \eqref{ap_a_4} since $\theta^{{k}}_t$ can be derived given $a_t^{{k}}$ and $\boldsymbol{\psi}^{{k}}$. Substituting \eqref{ap_a_4} in \eqref{ap_a_3}, we have
\begin{align}
    \mathbb{P}^{\boldsymbol{\psi}^{{k}}}&(S^{{k}}_t=s^{{k}}_t|a^{{k}}_{t+1},\theta_{0:t}^{{k}}) \nonumber \\
    =&\frac{\left[\splitfrac{\mathbb{P}^{\boldsymbol{\psi}^{{k}}}(Z^{{k}}_{t+1}=z^{{k}}_{t+1}|s^{{k}}_t,a^{{k}}_{t},\theta_{0:t}^{{k}})}
    {\cdot\mathbb{P}^{\boldsymbol{\psi}^{{k}}}(S^{{k}}_t = s^{{k}}_t|a^{{k}}_{t}, \theta_{0:t-1}^{{k}})}\right]}
    {\sum_{\tilde{s}^k_t}{\left[\splitfrac{\mathbb{P}^{\boldsymbol{\psi}^{{k}}}(Z^{{k}}_{t+1}=z^{{k}}_{t+1}|\tilde{s}^k_t,a^{{k}}_{t},\theta_{0:t}^{{k}})}
    {\cdot\mathbb{P}^{\boldsymbol{\psi}^{{k}}}(S^{{k}}_t = \tilde{s}^k_t|a^{{k}}_{t}, \theta_{0:t-1}^{{k}})}\right]}} \nonumber \\
    =&\frac{\mathbb{P}^{\boldsymbol{\psi}^{{k}}}(Z^{{k}}_{t+1}=z^{{k}}_{t+1}|s^{{k}}_t,a^{{k}}_{t},\theta_{0:t}^{{k}})\cdot\pi^{{k}}_t(s_t^{{k}})}
    {\sum_{\tilde{s}^k_t}{\mathbb{P}^{\boldsymbol{\psi}^{{k}}}(Z^{{k}}_{t+1}=z^{{k}}_{t+1}|\tilde{s}^k_t,a^{{k}}_{t},\theta_{0:t}^{{k}})\cdot\pi^{{k}}_t(\tilde{s}^k_t)}}.&
    \label{ap_a_5}
\end{align}
However,
\begin{align*}
    \mathbb{P}^{\boldsymbol{\psi}^{{k}}}(Z^{{k}}_{t+1}=z^{{k}}_{t+1}|&s^{{k}}_t,a^{{k}}_{t},\theta_{0:t}^{{k}}) = \mathbb{I}_{\Hat{h}^{{k}}_t(s^{{k}}_{t},\theta^{{k}}_t,v_{t+1}^{{1:K}})}\big(z^{{k}}_{t+1}\big),
\end{align*}
and substituting in \eqref{ap_a_5}, we have
\begin{align}
    \mathbb{P}^{\boldsymbol{\psi}^{{k}}}(&S^{{k}}_t=s^{{k}}_t|a^{{k}}_{t+1},\theta_{0:t}^{{k}}) \nonumber \\
    &=\frac{\mathbb{I}_{\Hat{h}^{{k}}_t(s^{{k}}_{t},\theta^{{k}}_t, v_{t+1}^{{1:K}})}\big(z^{{k}}_{t+1}\big) \cdot\pi^{{k}}_t(s_t^{{k}})}
    {\sum_{\tilde{s}^k_t}{\mathbb{I}_{\Hat{h}^{{k}}_t(\tilde{s}^k_t,\theta^{{k}}_t,v_{t+1}^{{1:K}})}\big(z^{{k}}_{t+1}\big)\cdot\pi^{{k}}_t(\tilde{s}^k_t)}}. \label{ap_a_6}
\end{align}
From \eqref{ap_a_2} and \eqref{ap_a_6}, the proof is complete.

\section*{Appendix B\\Proof of Lemma \ref{pi_k_2}}

Let $a_{t}^{{k}}$, $\theta_t^{{k}},$ and $\pi^{{k}}_t$ be the realizations of the random variables $A_{t}^{{k}}$, $\Theta_t^{{k}}$, and the conditional probability distribution $\Pi^{{k}}_t$, respectively. Let $\boldsymbol{\psi}^{{k}}$ be the prescription strategy. Then, for some Borel subset $P \subseteq \mathscr{P}_t^{{k}}$,
\begin{align}
    \mathbb{P}(\Pi_{t+1}^{{k}} \in P| a_t^k, \theta^{{k}}_{0:t},\pi^{{k}}_{0:t})
    =\sum_{z^{{k}}_{t+1}}&\mathbb{I}_P\left[\tilde{f}_{t+1}^{{k}}(\Pi_t^{{k}},\theta_{t}^{{k}},z_{t+1}^{{k}})\right] \nonumber \\
    &\cdot\mathbb{P}(Z^{{k}}_{t+1}=z^{{k}}_{t+1}|a_t^{{k}},\theta_t^{{k}},\pi_{0:t}^{{k}})\label{proof_2_1}.
\end{align}
Then, the second term in \eqref{proof_2_1} can be expanded
\begin{align}
    \mathbb{P}&(Z^{{k}}_{t+1}=z^{{k}}_{t+1}|a_t^{{k}},\theta_{0:t}^{{k}},\pi_{0:t}^{{k}}) \nonumber \\
    &=\sum_{s_t^{{k}}, v_{t+1}^{1:K}}\mathbb{I}_{\hat{h}_t^{{k}}(s_t^{{k}},\theta_{0:t}^{{k}},v_{t+1}^{{1:K}})}(z_{t+1}^{{k}})\cdot\mathbb{P}(V_{t+1}^{{1:K}}=v_{t+1}^{{1:K}}) \nonumber \\
    & \quad \quad \quad \quad \cdot\mathbb{P}(S_t^{{k}}=s_t^{{k}}|a_t^{{k}},\theta_{0:t}^{{k}},\pi_{0:t}^{{k}}) \nonumber \\
    &=\sum_{s_t^{{k}}, v_{t+1}^{1:K}}\mathbb{I}_{\hat{h}_t^{{k}}(s_t^{{k}},\theta_{0:t}^{{k}},v_{t+1}^{{1:K}})}(z_{t+1}^{{k}}) \cdot\mathbb{P}(V_{t+1}^{{1:K}}=v_{t+1}^{{1:K}}) \nonumber \\
    &\quad \quad \quad \quad \cdot \pi_t^{{k}}(s_t^{{k}})
    \label{proof_2_1a}.
\end{align}
The last equality in \eqref{proof_2_1a} holds since given the realizations of the accessible information $a_t^{{k}}$ and prescription strategy $\boldsymbol{\psi}^{{k}}$, the realization of the complete prescription $\theta_t^{{k}}$ is determined. Substituting \eqref{proof_2_1a} into \eqref{proof_2_1}, the proof is complete.

\color{black}

\section*{Appendix C\\Proof of Lemma \ref{two_stage_lemma}}

We use mathematical induction to develop the proof in four steps:

\textit{Step 1.} We show that the result holds for agent $K$.

\textit{Step 2.} We assume that the result holds for agent $k+1 \in \mathcal{K}$.

\textit{Step 3.} Using the assumption in Step 2, we prove that the result holds for agent ${{k}} \in \mathcal{K}$.

\textit{Step 4.} Starting with agent $K$, we can prove the result for agents $K$, $K-1$, $\dots$, ${{1}}$ using mathematical induction.

Let prescription strategy $\boldsymbol{\psi}_t^{{k}} = ({\psi}_t^{[{{k}},{{1}}]}, \dots, {\psi}_t^{[{{k}},{{K}}]})$. Now, we present the fours steps in detail.

\textit{Step 1.} Lemma \ref{lem_case_K} establishes the result for agent $K$.

\textit{Step 2.} Let $\boldsymbol{\psi}^{{*k+1}}_1$ be an optimal prescription strategy for agent $k+1$, such that
\begin{align*} 
\Gamma_1^{*[{{k+1}},{{i}}]} =
\begin{cases}
{\psi}_1^{*[{{k+1}},{{i}}]}(\Pi^{{k+1}}_1,\dots,\Pi^{K}_1), \text{ if } {{i}} \not\in \mathcal{B}^{{k+1}}, \\
{\psi}_1^{*[{{k+1}},{{i}}]}(\Pi^{{i}}_1,\dots,\Pi^{K}_1), \quad \text{ if } {{i}} \in \mathcal{B}^{{k+1}}.
\end{cases}
\end{align*}

\textit{Step 3.1.} For agent ${{k}} \in \mathcal{K}$, from Lemma \ref{lem_psi_relation1} we have the relation
    $\psi^{[{{k}},{{j}}]}_1 (\cdot) = \psi^{[{{i}},{{j}}]}_1 (\cdot)$, for all $\forall {{i}} \in \mathcal{B}^{{k}}$, and ${{j}} \in \mathcal{B}^{{i}}.$
Here, we substitute ${{i}} = {{k+1}}$ to obtain the relation
    $\psi_1^{*[{{k}},{{j}}]}(\cdot) = \psi_1^{*[{{k+1}},{{j}}]}(\cdot)$, for all ${{j}} \in \mathcal{B}^{{k+1}}$, which implies that
\begin{gather}
    \Gamma_1^{*[{{k}},{{j}}]} = \psi_1^{*[{{k}},{{j}}]}(\Pi^{{j}}_1,\dots,\Pi^{K}_1), \quad \forall {{j}} \in \mathcal{B}^{{k+1}}. \label{ap_c_lem_1}
\end{gather}
Note that agent ${{k}}$ can derive the information state $\Pi_t^{{j}}$ for every ${{j}} \in \mathcal{B}^{{k+1}}$ since $A_t^{{j}} \subseteq A_t^{{k}}$, and $\psi_t^{[{{j}},{{i}}]} = e_t^{[{{j}},{{k}}]}\big(\psi_t^{[{{k}},{{i}}]}\big)$ for all $i \in \mathcal{K}$.

\textit{Step 3.2.} For agent ${{k}} \in \mathcal{K}$, we fix the prescription strategy $\boldsymbol{\psi}_0^{{k}}$. Then, in the total expected cost incurred by the system, given by
\begin{gather*}
    \mathbb{E}^{\boldsymbol{\psi}^{{k}}}\big[c_0^{{k}}(S_0^{{k}},\Theta_0^{{k}}) + c_1^{{k}}(S_1^{{k}},\Theta_1^{{k}})\big], 
\end{gather*}
the choice of the prescription strategy $\boldsymbol{\psi}_1^{{k}}$ affects only the second term. Let $a_t^k$ and $\theta_{t}^k$ be the realization of the accessible information $A_t^k$ and complete prescription $\Theta_{t}^k$, respectively, of agent $k$ at time $t$. Then, the cost-to-go for agent $k$ at time $t=1$ is defined as
\begin{gather}
    \mathcal{J}_1^k : =\mathbb{E}^{\boldsymbol{\psi}^{{k}}}\big[ c_1^{{k}}(S_1^{{k}},\Theta_1^{{k}}) | a_1^k, \theta_{0}^k\big].
\end{gather}
Let $\pi_1^k$ be the realization of the the information state $\Pi_1^k$. Note that, the information state $\pi_1^k$ is simply a function of the accessible information $a_1^k$ and complete prescription $\theta_{0}^k.$ Now, we can use Lemma \ref{pi_k_3} to write that
\begin{gather}
    \mathcal{J}_t^k = \Tilde{c}_1^{{k}}(\pi_1^k,\theta_1^k).
\end{gather}
Using \eqref{ap_c_lem_1} in Step 3.1, we have $\Tilde{c}_1^{{k}}(\pi_1^{{k}},\theta_1^{{k}}) = \Tilde{c}_1^{{k}}\big(\pi_1^{{k}},\gamma_1^{{[k,1]}},$ $\dots,\gamma_1^{{[k,k]}},\psi_1^{{*[k,k+1]}}(\pi^{{{k+1}}:{{K}}}_1), \nonumber 
    \dots,\psi_1^{{*[k,K]}}(\pi_1^{K})\big),$
where $\pi_t^{{k:K}} = (\pi_t^{{k}},\dots,\pi_t^{{K}})$. Then, the optimal prescriptions $\gamma_1^{*[{{k}},{{i}}]}$ of agent ${{k}}$ for all agents ${{i}} \not \in \mathcal{B}^{{k+1}}$ are defined as
\begin{multline}
    ({\gamma}_1^{*[{{k}},{{1}}]},\dots,{\gamma}_1^{*[{{k}},{{k}}]}) 
    %
    := \arginf_{{\gamma}_1^{[{{k}},{{1}}]},\dots,{\gamma}_1^{[{{k}},{{k}}]}}
    \Big\{{C}_1^{{k}}\Big({\pi}_1^{{k}},{\gamma}_1^{[{{k}},{{1}}]},\dots, \nonumber \\
    {\gamma}_1^{[{{k}},{{k}}]}, {\psi}_1^{*[{{k}},{{k+1}}]}(\pi_1^{{k+1:K}}), \dots,{\psi}_1^{*[{{k}},{{K}}]}(\pi_1^{{K}})\Big)\Big\}.
\end{multline}
Thus, there exists a function ${\psi}_1^{*[{{k}},{{i}}]}: \mathscr{P}_t^{{k}} \times \dots \times \mathscr{P}_t^{{K}} \to \mathscr{G}_t^{[{{k}},{{i}}]}$ for every ${{i}} \not \in \mathcal{B}^{{k+1}}$ that gives us the optimal prescription
    ${\gamma}_1^{*[{{k}},{{i}}]} = {\psi}_1^{*[{{k}},{{i}}]}(\pi_1^{{k}},\dots,\pi_1^{{K}}).$

\textit{Step 4.} By mathematical induction starting with agent $K$, the optimal prescription strategy $\boldsymbol{\psi}_1^{*{{k}}}$ for agent ${{k}} \in \mathcal{K}$ satisfies the structural result
\begin{align}
\Gamma_1^{*[{{k}},{{i}}]} =
\begin{cases}
{\psi}_1^{*[{{k}},{{i}}]}(\Pi^{{k}}_1,\dots,\Pi^{K}_1), \quad \text{if } {{i}} \not\in \mathcal{B}^{{k}}, \\
{\psi}_1^{*[{{k}},{{i}}]}(\Pi^{{i}}_1,\dots,\Pi^{K}_1), \quad \text{if } {{i}} \in \mathcal{B}^{{k}}.
\end{cases}
\end{align}

\section*{Appendix D\\Proof of Lemma \ref{three_stage_lemma}}

We use mathematical induction to develop the proof in five steps:

\textit{Step 1.} We show that the result holds for agent $K$.

\textit{Step 2.} We assume that the result holds for \textit{all} agents $k+1, k+2,\dots,K$.

\textit{Step 3.} With the assumption in Step 2, we prove that the result holds for the prescription $\Gamma_1^{{[{{k}},{{j}}]}}$ of agent ${{k}} \in \mathcal{K}$ for an agent ${{j}} \in \mathcal{B}^{{k+1}}$.

\textit{Step 4.} With the assumption in Step 2, we prove that the result holds for the prescription $\Gamma_1^{{[{{k}},{{j}}]}}$ of agent ${{k}} \in \mathcal{K}$ for an agent ${{j}} \not \in \mathcal{B}^{{k+1}}$.

\textit{Step 5.} Starting with agent $K$ we can prove the result for agents $K$, $K-1$, $\dots$, ${{1}}$ using mathematical induction.

Let prescription strategy $\boldsymbol{\psi}_t^{{k}} = ({\psi}_t^{[{{k}},{{1}}]}, \dots, {\psi}_t^{[{{k}},{{K}}]})$. Next, we present the five steps in detail.

\textit{Step 1.} Lemma \ref{lem_case_K} establishes the result for agent $K$.

\textit{Step 2.} We let $\boldsymbol{\psi}^{{*j}}_1$ be an optimal prescription strategy of every agent ${{j}} \in \mathcal{B}^{{k+1}}$, such that
\begin{align} \label{eq_ap_d_induction}
\Gamma_1^{*[{{j}},{{i}}]} :=
\begin{cases}
{\psi}_1^{*[{{j}},{{i}}]}(\Pi^{{j}}_1,\dots,\Pi^{K}_1), \quad \text{if } {{i}} \not\in \mathcal{B}^{{j}}, \\
{\psi}_1^{*[{{j}},{{i}}]}(\Pi^{{i}}_1,\dots,\Pi^{K}_1), \quad \text{if } {{i}} \in \mathcal{B}^{{j}}.
\end{cases}
\end{align}

\textit{Step 3.} For agent ${{k}} \in \mathcal{K}$, from Lemma \ref{lem_psi_relation1} we have the relation
    $\psi^{[{{k}},{{j}}]}_1 (\cdot) = \psi^{[{{i}},{{j}}]}_1 (\cdot)$, for all ${{i}} \in \mathcal{B}^{{k}}$, and ${{j}} \in \mathcal{B}^{{i}}.$
Here, we substitute ${{i}} = {{k+1}}$ to obtain $\psi_1^{*[{{k}},{{j}}]}(\cdot) = \psi_1^{*[{{k+1}},{{j}}]}(\cdot)$ for all ${{j}} \in \mathcal{B}^{{k+1}}$. This implies that
\begin{gather}
    \Gamma_1^{*[{{k}},{{j}}]} = \psi_1^{*[{{k}},{{j}}]}(\Pi^{{j}}_1,\dots,\Pi^{K}_1), \quad \forall {{j}} \in \mathcal{B}^{{k+1}}. \label{ap_c_lem_1_1}
\end{gather}
Note that agent ${{k}}$ can derive the information state $\Pi_t^{{j}}$ for every ${{j}} \in \mathcal{B}^{{k+1}}$ since $A_t^{{j}} \subseteq A_t^{{k}}$, and $\psi_t^{[{{j}},{{i}}]} = e_t^{[{{j}},{{k}}]}\big(\psi_t^{[{{k}},{{i}}]}\big)$ for all ${{i}} \in \mathcal{K}$.

\textit{Step 4.1.} For agent ${{k}} \in \mathcal{K}$, we fix the prescription strategy $\boldsymbol{\psi}_0^{{k}}$ and select the prescription strategy $\boldsymbol{\psi}_2^{{*k}}$ using \eqref{three_stage_eq_3}. In the total expected cost incurred by the system, given by
\begin{align*}
    \mathbb{E}^{\boldsymbol{\psi}^{{k}}}\Big[c_0^{{k}}(S_0^{{k}},\Theta_0^{{k}})  +c_1^{{k}}(S_1^{{k}},\Theta_1^{{k}})  +c_2^{{k}}(S_2^{{k}},\Theta_2^{{k}})\Big], 
\end{align*}
the choice of the strategy $\boldsymbol{\psi}_1^{{k}}$ affects only the second and third terms. Let $a_t^k$ and $\theta_{t}^k$ be the realization of the accessible information $A_t^k$ and complete prescription $\Theta_{t}^k$, respectively, of agent $k$ at time $t$. The cost-to-go for agent $k$ at time $t=1$ is given by
\begin{gather}
    \mathcal{J}^k_1 := \mathbb{E}^{\boldsymbol{\psi}^{{k}}}\Big[c_1^{{k}}(S_1^{{k}},\Theta_1^{{k}})  +c_2^{{k}}(S_2^{{k}},\Theta_2^{{k}}) \; | \; a_1^k, \theta_{0}^k\Big] \label{ap_d_to_go}
\end{gather}

\textit{Step 4.2.} Let $s_t^{{k:K}} = (s_t^{{k}},\dots,s_t^{{K}})$, $\theta_t^{{k:K}} = (\theta_t^{{k}},\dots,\theta_t^{{K}})$ and $\pi_t^{{k:K}} = (\pi_t^{{k}},\dots,\pi_t^{{K}})$ be the realizations of the random variables $S_t^{{k:K}} = (S_t^{{k}},\dots,S_t^{{K}})$, $\Theta_t^{{k:K}} = (\Theta_t^{{k}},\dots,\Theta_t^{{K}})$, and the conditional probability distribution $\Pi_t^{{k:K}} = (\Pi_t^{{k}},\dots,\Pi_t^{{K}})$.
Consider the second term in \eqref{ap_d_to_go}, $\mathbb{E}^{\boldsymbol{\psi}^{{k}}}\Big[c_2^{{k}}(S_2^{{k}},\Theta_2^{{k}}) | a_1^k, \theta_{0}^k\Big].$ 
We can substitute the structural result for $\boldsymbol{\psi}_2^{{*k}}$ in \eqref{three_stage_eq_3} to obtain the relation
\begin{multline} \label{ap_c_lem_1_3}
   c_2^{{k}}(S_2^{{k}},\Theta_2^{{k}})
    = c_2^{{k}}\Big(S_2^{{k}},\psi_2^{*[{{k}},{{1}}]}(\Pi_2^{{k:K}}),\dots,\psi_2^{*[{{k}},{{k}}]}(\Pi_2^{{k:K}}), \\
    \dots,\psi_2^{*[{{k}},{{K}}]}(\Pi_2^{{K}})\Big).
\end{multline}
For agent ${{k}} \in \mathcal{K}$, we invoke Lemmas \ref{state_suff_k} and \ref{pi_k_1} to state the relations
\begin{align}
    \text{1. }
    &\Pi_2^{{k}} = \Tilde{f}_2^{{k}}(\Pi_1^{{k}},\Theta_1^{{k}},Z_2^{{k}}), \nonumber \\
    \text{2. }
    &Z_2^{{k}} = \hat{h}_1^{{k}}(S_1^{{k}},\Theta_1^{{k}},V_2^{{1}},\dots,V_2^{{K}}), \nonumber \\ 
    \text{3. }
    &S_2^{{k}} = \hat{f}^{{k}}_{1}(S^{{k}}_1, W_1, V_{2}^{1},\dots,V_{2}^{K}, \Theta_1^{{k}}). \label{ap_c_lem_1_4}
\end{align}
Using the relations in \eqref{ap_c_lem_1_4}, we can define a function $\bar{f}_2^{{k}}: \mathscr{P}_1^{{k}} \times \mathscr{G}_1^{{k}} \times \mathcal{S}_1^{{k}} \times \mathcal{V}^{{1}} \times \dots \times \mathcal{V}^{{K}} \to \mathscr{P}_2^{{k}}$, such that
\begin{align}
    \Pi_2^{{k}} = \bar{f}_2^{{k}}(\Pi_1^{{k}},\Theta_1^{{k}},S_1^{{k}},V_2^{{1:K}}). \label{ap_c_lem_1_5}
\end{align}

Then, we substitute the relations \eqref{ap_c_lem_1_3} and \eqref{ap_c_lem_1_5} in the second term of \eqref{ap_d_to_go}, and define a new function $\underbar{c}_1^{{k}}:\mathscr{P}_1^{{k}} \times \dots \times \mathscr{P}_1^{{K}} \times \mathcal{S}_1^{{k}} \times \dots \times \mathcal{S}_1^{{K}} \times \mathscr{G}_1^{{k}} \times \dots \times \mathscr{G}_1^{{K}} \times \mathcal{V}^{{k}} \times \dots \times \mathcal{V}^{{K}} \times \mathcal{W} \to \mathbb{R}$ as follows,
\begin{gather}
    c_2^{{k}}(S_2^{{k}},\Theta_2^{{k}})
    =:\underbar{c}_1^{{k}}(\Pi_1^{{k:K}}, S_1^{k:K}, \Theta_1^{{k:K}}, V_2^{1:K}, W_1).
    \label{ap_c_lem_1_6}
\end{gather}

\textit{Step 4.3.} Using \eqref{ap_c_lem_1_6}, we can rewrite the cost-to-go as follows,
\begin{multline}
    \mathcal{J}^k_1 = \Tilde{c}_1^{{k}}(\pi_1^{{k}},\theta_1^{{k}})  \\  + \mathbb{E}^{\boldsymbol{\psi}^{{k}}}\Big[\underbar{c}_1^{{k}}(\Pi_1^{{k:K}}, S_1^{k:K}, \Theta_1^{{k:K}}, V_2^{1:K}, W_1) \; | \; a_1^k, \theta_{0}^k\Big]. \label{ap_d_to_go_2}
\end{multline}
Note that we can add the realizations of the information states $\pi^{k:K}_1$ and complete prescription $\theta_1^k$ to the conditioning in \eqref{ap_d_to_go_2} because, given the accessible information $a_t^k$, the prescription strategy $\boldsymbol{\psi}^{{k}}$, and the positional relationships $e^{[i,k]}: \Psi^{k} \mapsto \Psi^{i}$, $i \in \mathcal{B}^k$, agent $k$ can derive $\pi^{k:K}_1$ and $\theta_1^k$. This implies that,
\begin{multline}
    \mathcal{J}^k_1 = \Tilde{c}_1^{{k}}(\pi_1^{{k}},\theta_1^{{k}}) +  \mathbb{E}^{\boldsymbol{\psi}^{{k}}}\Big[\underbar{c}_1^{{k}}(\Pi_1^{{k:K}}, S_1^{k:K}, \Theta_1^{{k:K}}, V_2^{1:K},   \\ W_1) \; | \; a_1^k, \pi_1^{k:K} \theta_{0:1}^k\Big]. \label{ap_d_to_go_3}
\end{multline}
Now, we can expand the conditional expectation for the second term in \eqref{ap_d_to_go_3} as follows,
\begin{multline}
    \mathbb{E}^{\boldsymbol{\psi}^{{k}}}\Big[\underbar{c}_1^{{k}}(\Pi_1^{{k:K}}, S_1^{k:K}, \Theta_1^{{k:K}}, V_2^{1:K}, W_1) \; | \; a_1^k, \pi_1^{k:K}, \theta_{0:1}^{k}\Big] \\
    = \sum_{s^{{k:K}}_1,v^{{1:K}}_2, w_1}\underbar{c}^{{k}}_1(\pi_1^{{k:K}},s_1^{{k:K}},\theta_1^{{k:K}}, v_2^{1:K}) \cdot \mathbb{P}(V_2^{1:K} = v_2^{1:K}) \\
    \cdot \mathbb{P}(W_1 = w_1) \cdot \mathbb{P}^{\boldsymbol{\psi}^{{k}}}\big(S^{{k:K}}_1 = s^{{k:K}}_1 |a_1^{{k}},\pi_1^{{k:K}},\theta_{0:1}^{{k}}\big). \label{ap_c_lem_1_6_1}
    \end{multline}
From Assumption 2, the primitive random variables $\{V_2^{{1}},\dots,V_2^{{K}}, W_1\}$ have known probability distributions and are mutually independent. 
For the first term in \eqref{ap_c_lem_1_6_1}, we use \eqref{eq_ap_d_induction} to expand the realization $\theta_1^{{j}}$ of the complete prescription $\Theta_1^{{j}}$ of every agent ${{j}} \in \mathcal{B}^{{k+1}}$ as follows,
\begin{align} \label{ap_c_lem_1_7}
    \theta_1^{{j}} = \big(&\psi_1^{*[{{j}},{{1}}]}(\pi_1^{{{{j}}:K}}),\dots,\psi_1^{*[{{j}},{{j}}]}(\pi_1^{{{{j}}:K}}), \nonumber \\
    &\psi_1^{*[{{j}},{{j+1}}]}(\pi_1^{{{{j+1}}:K}}),\dots,\psi_1^{*[{{j}},{{K}}]}(\pi_1^{{K}})\big).
\end{align}
For the third term in \eqref{ap_c_lem_1_6_1}, we apply the result of Lemma \ref{lem_s_relation} for agents $\{k+1,\dots,K\}$ and simplify as follows,
\begin{align}
    \mathbb{P}^{\boldsymbol{\psi}^{{k}}}\big(&S^{{k:K}}_1 = s^{{k:K}}_1 \; \big|\; a_1^{{k}},\pi_1^{{k:K}},\theta_{0:1}^{{k}}\big) \nonumber \\
    = \mathbb{P}^{\boldsymbol{\psi}^{{k}}}\big(&s^{{k}}_1, a_1^{{k}} \setminus a_1^{{k+1}},\dots,  a_1^{{k}} \setminus a_1^{{K}}  \; \big| \; a_1^{{k}},\pi_1^{{k:K}},\theta_{0:1}^{{k}}\big) \nonumber \\
    = \mathbb{P}^{\boldsymbol{\psi}^{{k}}}\big(&S^{{k}}_1=s^{{k}}_1 \; \big| \; a_1^k, \pi_1^{{k:K}},\theta_{0:1}^{{k}}\big) = \pi_1^{{k}}(s_1^{{k}}), \label{ap_c_lem_1_8}
\end{align}
for any realization $(a_1^{{k}},\pi_1^{{k:K}}, \theta_{0:1}^{{k}})$ of positive probability. Note that in \eqref{ap_c_lem_1_8} we dropped the terms $\theta^{{k}}_1$ and $\pi_1^{k:K}$ from the conditioning because they are functions of the accessible information $a_t^k$, prescription strategy $\boldsymbol{\psi}^k$, and positional relationships $e^{[i,k]}: \Psi^k \mapsto \Psi^i$, $i \in \mathcal{B}^k$.
After substituting the results \eqref{ap_c_lem_1_7} and \eqref{ap_c_lem_1_8} in \eqref{ap_c_lem_1_6_1}, we can define a new function $\bar{c}_1^{{k}}: \mathscr{P}_1^{{k}} \times \dots \times \mathscr{P}_1^{{K}} \times \mathscr{G}_1^{{k}} \to \mathbb{R}$ such that
\begin{multline} \label{ap_c_lem_1_9}
     \bar{c}_1^{{k}}(\pi_1^{{k:K}},\theta_1^{{k}}) := \mathbb{E}^{\boldsymbol{\psi}^{{k}}}\Big[\underbar{c}_1^{{k}}(\Pi_1^{{k:K}}, S_1^{k:K}, \Theta_1^{{k:K}}, V_2^{1:K}, W_1)
     \\
      | \; a_1^k, \pi_1^{k:K}, \theta_{0:1}^{k}\Big].
\end{multline}

\textit{Step 4.4.} Thus, the cost-to-go for agent $k$ at time $t$ is given by
$
    \mathcal{J}_1^k = \Tilde{c}_1^{{k}}(\pi_1^{{k}},\theta_1^{{k}}) + \bar{c}_1^{{k}}(\pi_1^{{k:K}},\theta_1^{{k}}).
$
Then, using a procedure similar to the proof of Lemma \ref{two_stage_lemma}, there exists a function ${\psi}_1^{*[{{k}},{{i}}]}: \mathscr{P}_t^{{k}} \times \dots \times \mathscr{P}_t^{{K}} \to \mathscr{G}_t^{[{{k}},{{i}}]}$ for every ${{i}} \not \in \mathcal{B}^{{k+1}}$ such that the optimal prescription is
\begin{gather}
    {\Gamma}_1^{*[{{k}},{{i}}]} = {\psi}_1^{*[{{k}},{{i}}]}(\Pi_1^{{k}},\dots,\Pi_1^{{K}}).
\end{gather}

\textit{Step 5.} By mathematical induction starting with agent $K$, the optimal prescription strategy $\boldsymbol{\psi}_1^{*{{k}}}$ for agent ${{k}} \in \mathcal{K}$ satisfies the structural result
\begin{align}
\Gamma_1^{*[{{k}},{{i}}]} =
\begin{cases}
{\psi}_1^{*[{{k}},{{i}}]}(\Pi^{{k}}_1,\dots,\Pi^{K}_1), \quad \text{if } {{i}} \not\in \mathcal{B}^{{k}}, \\
{\psi}_1^{*[{{k}},{{i}}]}(\Pi^{{i}}_1,\dots,\Pi^{K}_1), \quad \text{if } {{i}} \in \mathcal{B}^{{k}}.
\end{cases}
\end{align}

\color{black}

\bibliographystyle{ieeetr}

\bibliography{References_per_date}

\end{document}